\pgfplotsset{compat=1.17}
\title{On Preserving or Reversing Higher-Order Unimodality and Convexity by Sign-Regular Kernels}
\author{Zakaria Derbazi \\{\it\small Queen Mary University of London}}
\newcommand{\negL}[1]{\overline{\mathcal{#1}}}
\newcommand{\eL}{\mathcal{L}}
\definecolor{darkblue}{rgb}{0.0, 0.0, 0.55}
\def\myabstract{
	This work investigates preserving and reversing unimodality and convexity properties for sequences under transformations defined by sign-regular kernels. It is shown that these transformations only preserve these properties if the kernels are totally positive of order three or their additive inverse is totally negative of order three. In contrast, these transformations reverse these properties if the underlying kernel is totally negative or if its additive inverse is a totally positive kernel, both of order three. Furthermore, these results are extended to higher-order convex and multimodal sequences. These findings, which expand upon Karlin's earlier results on convexity, form the basis for deriving sufficient conditions for the preservation or reversal of higher-order convexity or generalised unimodality of a quotient of sequences, where both the numerator and denominator are transformations by the same sign-regular kernel.
}
\tikzset{%
	>={Latex[width=2mm,length=2mm]},
	base/.style = {rectangle, rounded corners, draw=black,
		minimum width=2cm, minimum height=1cm,
		text centered, font=\sffamily},
	activityStarts/.style = {base},
	startstop/.style = {base, fill=green!5},
	startstop2/.style = {base, fill=purple!5},	
	activityRuns/.style = {base, fill=green!30},
	activityRuns2/.style = {base, fill=purple!40},	
	finalOutput/.style = {base, fill=black!30},	
	process/.style = {base, minimum width=2.5cm, fill=orange!15,
		font=\ttfamily},
	process2/.style = {base, minimum width=2.5cm, fill=orange!50,
		font=\ttfamily},		
}
\begin{document}
	\maketitle
	\begin{abstract}
		\myabstract
	\end{abstract}

	\section{Introduction}\label{sec1} 
	
	A real matrix, finite or infinite, is said to be \textit{sign-regular} of order $r$, or  $\SR{r}$ for short if the sign of every minor of order $k=1,2,\ldots, r$ depends only on $k$. Specifically, for each $k$, there exists a number $\varepsilon_k\in \{-1,1\}$ such that  $\varepsilon_k d_k \ge 0$, where $d_k$ represent any $k$th order determinant of the matrix. We refer to $\varepsilon_k$ as the `minor sign' of order $k$. 
	A stronger form of sign regularity is when all the minors at every order $1,2,\ldots, r$ are nonnegative (resp. nonpositive). In these cases, the matrix is called \textit{totally positive} (resp. \textit{totally negative}) of order $r$,  or $\TP{r}$ (resp. $\TN{r}$) for short.
	
	These definitions extend to  kernels $K: I\times J \to \mr$, with $I, J \subset \mr$, by considering  the matrix $\left(K(x_i, y_j)\right)$, where ${i,j \in [n] \coloneqq \{1,\ldots,n\}}$, for any selection of ordered tupless $(x_1, \ldots, x_n)$ in $I$ and $(y_1, \ldots, y_n)$ in $J$. The kernel is $\TP{r}$ (resp. $\TN{r}$, or $\SR{r}$) if the corresponding matrix has that property for all such selections of ordered tuples. If the subscript $r$ is omitted in any of the previous definitions, the property  in question is understood to hold for all orders.  For example, a $\TP{}$ matrix has nonnegative minors of all orders. For sequences $(a_k)_{k \in I}$, where $I \subseteq \mn$,  the definitions apply to its kernel representation given by the matrix $A=(a_{i-j})_{i,j \in I}$, where $a_k = 0$ when $k < 0$. A sequence is $\TP{r}$ $\TN{r}$ or  $\SR{r}$ if $A$ is $\TP{r}$ $\TN{r}$ or  $\SR{r}$  \cite{KarlinTPBook}.
	
	Sign-regular kernels possess the \textit{variation-diminishing} (VD) property. This property ensures that for a kernel $\eL$ applied to a real-valued function $f$, the number of sign changes in $\eL f$ (see Definition \ref{def.sign.chg}) does not exceed the number of sign changes in $f$. Karlin demonstrated that when  these numbers are equal and strictly less than the kernel's order of sign-regularity, the functions share identical or opposite sign patterns. While this has been extended to unimodal and higher-order convex/concave functions under total positivity, the impact of sign reversals by these kernels on unimodal or convex/concave sequences remains unexplored.  Furthermore, the role and  types of sign-regularity (beyond total positivity) in preserving or reversing unimodality and convexity have not been thoroughly investigated.  
	
	In light of these limitations, this work aims to address these gaps and broaden the scope to higher-order convexity/concavity and generalised unimodality.  Motivated by applications of unimodality-preserving kernels in optimal stopping theory \cite{ZD0}, and recent work on unimodality preservation in ratios of series and integral transforms  \cite{Karp}, we investigate these properties.  However, prior studies have largely concentrated on sign-regular unimodality-preserving kernels, neglecting the reversion property.  Furthermore, the preservation property considered in recent work was incorrectly assumed to hold unconditionally for sign-regular kernels.  To rectify this, we explore the reversion property of sign-regular kernels and extend the preservation and reversal of higher-order convexity/concavity to generalised unimodality, which we refer to as $m$-modality or generalised unimodality of order $m$.  We also provide corrections and adaptations to recent findings, extending them to the multimodal setting.  Our analysis is confined to sequences, with the treatment of transformation mapping functions to sequences/functions reserved for a subsequent paper.
	
	The structure of this work is as follows: Section 2 introduces $m$-modal sequences and establishes key properties, including necessary and sufficient conditions for $m$-modality.  We also demonstrate a simplification that arises when considering a multimodal sequence as a sequence of unimodal subsequences sharing a common mode. Section 3 establishes necessary and sufficient conditions for third-order sign-regular kernels to be unimodality-preserving (resp. unimodality-reversing).  Here, unimodality reversal is defined as transforming a unimodal sequence into a bimodal or nonmodal sequence as it shall be revealed in the sequel.  This section further explores the effects of unimodality-preserving (resp. reversing) transformations and their inverses on unimodal quotients, differences, and sums.
	
	The final section turns to sign-regular kernels that preserve or reverse generalised unimodality and higher-order convexity.  Mirroring the base case of unimodality and convexity, it is shown that if a transformation defined by a sign-regular kernel is $m$-modality-preserving, then the kernel must be either totally positive or have a totally negative additive inverse, both of order $2m+1$. Conversely, for $m$-modality reversal, the kernel must be totally negative, or its additive inverse totally positive, both of order $2m+1$.  For $m$-convex sequences, the kernel requirements are identical to the $m$-modality case, but the necessary order of sign-regularity is only $m+1$, and stronger conclusions are obtained. These results extend Karlin's earlier work on  $m$-convexity-preserving transformations.  This section also presents sufficient conditions for the $m$-modality and $m$-convexity (resp. $m$-concavity) of differences and quotients of transformations of sequences whose quotient is  $m$-modal (resp. $m$-convex) --results that may be of independent interest.  For background on classical monotonicity criteria for quotients of functions, see, for example,  \cite{mao, yang, pinelis} and the references therein, and  \cite{Karp} for the unimodality criteria.   An application to domination polynomials and illustrative numerical examples are provided.

	\subsection*{Notation and Conventions}
	Throughout this paper, $\mn$ and $\mr$ denote, respectively, the set of positive integers, and the set of real numbers;  $[n]$ is the set of the first $n$ positive integers. We drop the subscript from the notation of a sequence and write $u$ or $(u_k)$ to mean $(u_k)_{k \in I}$, where $I \subseteq\mn$ if no ambiguity arises. For two totally ordered sets $A$ and $B$, we write $A \prec B$ to mean that $a < b$ holds for all $a \in A$ and all $b \in B$. Operations on sequences are understood to be term by term. That is, for a sequence $u$ and scalar $\lambda$, the sequence $u-\lambda$ is defined as $(u_1 - \lambda, u_2 - \lambda, \cdots)$. For a given operator $\eL$, we employ the notation $\eL u$, $\eL(u)$ interchangeably. In the domain of the definition of a kernel $\mathcal{L}: I \times J \to \mr$, $I$ and $J$ are either intervals of the real line or a countable set of discrete values along the real line. In this work,  both $I$ and $J$ are countable sets, and $\mathcal{L}$ is considered a (finite or infinite) matrix. The generic terms kernel and transformation are used interchangeably.
	
	\section{Generalised Unimodality of Sequences}
	We shall first define some concepts used throughout this paper .
	\begin{define}[Number of sign changes]\label{def.sign.chg}
		The number of sign changes in a finite or infinite sequence $u \coloneqq (u_1, u_2, \ldots,)$ is given by the function
		\begin{equation*}
			S(u)  \coloneqq  S(u^\prime_{1},u^\prime_{2}, \ldots) = \sum_{i \ge 1} \abs{\sgn(u^\prime_{i+1})-\sgn(u^\prime_i)},\qquad S(0, 0, \ldots, 0) =0,
		\end{equation*}	
		where $u^\prime_1, u^\prime_{2}, \ldots$  are the relabelled  nonzero terms of $u$. 
		In the case of a real function $f$ defined on $I \subseteq \mr$, its number of sign changes on $I$ is given by 
		\begin{equation*}
			S(f) = \sup_{ \substack{x_1, \ldots, x_n \in I \\[2pt] x_1 < x_2 < \ldots < x_n}} S[f(x_1), f(x_2), \ldots, f(x_n)],
		\end{equation*}
		where  $n$ is taken to be arbitrary but finite \cite{SchoenbergVD1}. We further define the `maximum number of sign changes' in a sequence (or a function) $u$ resulting from a shift by a constant value 
		\begin{equation}\label{def.s+}
			S^+(u) \coloneqq \max_{\lambda \in \mr} S(u-\lambda).
		\end{equation}

	\end{define}
	
	In the rest of the definitions, $m$ is a positive integer.
	
	\begin{define}[$m$-modal function]\label{def.multimodel.function}
		A real function $f : I \to \mr$ is $m$-modal on $I \subseteq \mr$, if $I$ can be partitioned into $2m-1$ or $2m$ disjoint intervals  $I_1 \prec J_1 \prec I_2 \prec J_2 \prec \cdots \prec J_{m-1} \prec I_m \prec J_m$, such that
		\begin{equation}\label{def.condition.multimodel.functions1}\tag{C1}
			\begin{dcases}
				f \text{ is nodecreasing on}  &I_{k},\\			
				f \text{ is nonincreasing on } &J_{k} ,\\
			\end{dcases}
		\end{equation}
		and for  $k \in [m-1]$, there exists $x_k \in I_k$,  $y_k \in J_k$ and $x_{k+1} \in I_{k+1}$ such that
		\begin{equation}\label{def.condition.multimodel.functions2}\tag{C2}
			f(x_k) > f(y_k) <  f(x_{k+1}),
		\end{equation}	
	\end{define}
	
	A classic example of a bimodal  (2-modal) function is  the $U$-shaped symmetric beta probability density function $f(x) = {\left[x(1-x)\right]^{\alpha-1}}/{{\rm B}(\alpha, \alpha)}$, where  $x \in [0,1]$, $\alpha > 1$ is the shape parameter and $\rm B$ is the beta function. $f$ has a maximum value $1/2$ located at $x=0$ and $x=1$. Here, we can use the partition of interval $I$ into  $I_1=\{0\}, J_1=(0.0.5], I_2=(0.5,1)$ and $J_2=\{1\}$.

	For a real sequence $f$ ($I \subseteq \mn$), we adjust the definition of $m$-modality as follows.
	\begin{define}[$m$-modal sequence]\label{def.multimodel}
		A   real  sequence $u \coloneqq (u_1, u_2, \ldots)$  supported on the lattice of positive integers is said to be 
		\textit{$m$-modal} (or modal of order $m$), if there exists $2m-1$ disjoint (and nonempty) intervals 
		$		[a^-_{1}, a^+_{1}] \prec [b^-_{1}, b^+_{1}] \prec [a^-_{2}, a^+_{2}] \prec [b^-_{2}, b^+_{2}] \prec  \cdots  \prec [b^-_{m-1}, b^+_{m-1}] \prec  [a^-_{m}, a^+_{m}]  $
		in  $\mn$ such that  for $k \in [m]$ 	
		\begin{equation}\label{def.condition.multimodel.sequence1}\tag{D1}
			\begin{dcases}
				u_{j} \le  u_{j+1} &~ \text{all } ~ b^+_{k-1} \le  j  <  a^-_{k},\\
				u_{j} =  u_{j+1} &~ \text{all } ~  a^-_{k} \le j  < a^+_{k} \text { and } b^-_{k-1} \le j  < b^+_{k-1},\\
				u_{j} \ge  u_{j+1} &~ \text{all } ~  a^+_{k} \le j <  b^-_{k}  ,
			\end{dcases}
		\end{equation}
		and for $k \in [m]$
		\begin{equation}\label{def.condition.multimodel.sequence2}\tag{D2}
		u_{b^-_{k-1}}	< u_{a^-_{k}} > u_{b^-_{k}},
		\end{equation}			 
		with the convention  $b^-_0 =b^+_0 = 0$, $b^-_m = \infty$ and  $u_0 = u_\infty= \inf u$. In the  case where $u$ is a finite sequence of length $N$, set $b^-_m = N+1$ and  $u_{N+1}=u_0$. $[a^-_1, a^+_1], \ldots,  [a^-_m, a^+_m]$ are  the\textit{ intervals of modes} of $u$ and $(0, a^-_1], [a^+_1, b^-_1], \ldots, [b^+_{m-1}, a^-_m], [a^+_m, \infty)$  are the corresponding  \textit{monotonicity intervals}.
	\end{define}

	\begin{rem}
		$ $
		\begin{itemize}
			
			\item 
			Our definition of $m$-modality agrees, in the unimodal case, with Khinchine's definition and as a characterisation property of single-humped  probability density functions (see Chapter 4 in \cite{Bertin}). 	Some optimisation literature considers bimodal functions  (in our sense) to be unimodal as they assumed quasiconvex (see Section 3.4 in \cite{Boyd}).  This contrasts sharply with the definition of unimodality common in probability texts such \cite{Dharmadhikar} (see p. 85), which equates unimodality with quasiconcavity.

			\item By our definition, any constant sequence is unimodal (1-modal or modal of order 1), and the  infinite sequence $u$ with general term $u_k=k$  (or $u_k = 1-1/k$) is not unimodal,  as it lacks an interval of modes satisfying (\ref{def.condition.multimodel.sequence1}). However, if $u_k = 1/k$, then $u$ is unimodal.

			\item Definitions \ref{def.multimodel.function} and \ref{def.multimodel} are  equivalent. In the former, each interval of modes is contained within $I_k$, whereas in the latter definition, $I_k \coloneqq [a^-_k, a^+_k]$ itself constitutes an interval of modes.	
			\item Based on Definition \ref{def.multimodel.function}, $f$ need not be continuous on $I$.

			\item The  definitions imply that $f$ is unimodal in each interval $I_k \cup J_k$ and $u$ is unimodal on each of 
			$(0, b^{+}_1], [b^{-}_1, b^{+}_2], \ldots, , [b^{-}_{m-2}, b^{+}_{m-1}],  [b^-_m, \infty)$.
			\item Periodic functions on an open interval or infinite alternating sequences, such as $(0,1,0,1,\ldots)$, are considered infinitely modal. 
			\item Definition \ref{def.mmodal} introduces other criteria for determining the $m$-modality of a sequence.
		\end{itemize}
	\end{rem}
	
	It is natural to view $m$-modal sequences as a collection of $m$ unimodal sequences.  We formalise this intuition in the next definitions.
	\begin{define}[Unimodal $m$-partition and $m$-decomposition of a sequence]\label{def.patition}
		Consider the interval $I \subseteq \mn$ partitioned into  $m \ge 1$ (disjoint) intervals $I_1, \ldots, I_m$, such that $I_{i} \prec I_{j} $ for $1 \le i < j \le n$ and each inner interval $I_k$, where $1 < k < m$,  contains at least two points while the end intervals $I_1, I_m$ may be singletons. Let $J_1, \ldots, J_m$ be `enlarged' versions of the previous intervals   such that each $J_k$ is the interval $I_{k}$ extended to contain the last point of the preceding interval $I_{k-1}$, when $k > 1$, and the first point in the subsequent interval $I_{k+1}$ when $k < m$.
		The subsequences $ u^{(1)}\coloneqq (u_k)_{k \in I_1}, \ldots, u^{(m)}\coloneqq (u_k)_{k \in I_m}$ form an \textit{$m$-partition }of $u$, denoted by $u = (u^{(1)}, \ldots, u^{(m)})$, where $u$ is obtained by concatenating $u^{(1)}, \ldots, u^{(m)}$. Conversely, the sequences $v^{(1)}\coloneqq (u_k)_{k \in J_1}, \ldots, v^{(m)}\coloneqq (u_k)_{k \in J_m}$ form an \textit{$m$-decomposition} of $u$, which is essentially $u$ with $2m-2$ duplicated terms. An $m$-partition (resp., $m$-decomposition) is unimodal if each of its components is unimodal.
	\end{define}
	To ensure that all components in a unimodal $m$-partition (resp. $m$-decomposition) of a given sequence share the same mode, we propose the following transformation:
	\begin{define}[Max-alignment and mode-alignment]\label{def.mode-alignment}
		Let $(u^{(1)}, \ldots, u^{(m)})$ be an $m-$partition  (or $m$-decomposition) of a given real sequence $u$, where each subsequence $u^{(k)}$ has a finite maximum value, which we denote $d_k$. Let $d_{*}\coloneqq \min_k d_k$. The \textit{max-aligned} $m$-partition (resp. $m$-decomposition) of $u$,  is the sequence $u^\circ \coloneqq  (u^{(1)}-\lambda_1, \ldots, u^{(m)}-\lambda_m)$, where  $\lambda_1 = d_1 -d_*, \ldots, \lambda_m = d_m -d_*$. If the original sequence is a unimodal $m$-partition (resp. $m$-decomposition), then we call this transformation the \textit{mode-aligned} $m$-partition (resp. $m$-decomposition) of $u$.
	\end{define}

	\subsection{The sufficient and necessary condition for  $m$-modality}
	To facilitate our treatment in the rest of this paper, we introduce the following definition:
	\begin{define}[$M$-Regular sequence]\label{def.regular}
		A sequence $u$ defined on $I$ is regular or $M$-regular if there exists an $M$ such that $(u_k)_{k \ge M}$ is nonincreasing. We omit the $M$ if there is no use for it and call such sequences regular.
	\end{define}	
	
	\begin{thm}\label{thm.multimodal.maximum}
		Let $u$ be an infinite real sequence on $I \subseteq \mn$. $u$ is modal of order $m \in \mn$ iff $u$ is $M$-regular for some positive. Moreover, if such $M$ exists then $2 \le 2m \le M$.
	\end{thm}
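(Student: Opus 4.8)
The plan is to establish the two implications of the stated equivalence separately and then read off the quantitative bound from the interval data produced along the way. The forward implication is essentially immediate; the substance lies in the converse and in the index bookkeeping behind $2m\le M$.

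For the forward implication, suppose $u$ is $m$-modal with mode intervals $[a^-_k,a^+_k]$ and valley intervals $[b^-_k,b^+_k]$ as in Definition~\ref{def.multimodel}. The final monotonicity interval is $[a^+_m,b^-_m)=[a^+_m,\infty)$ by the convention $b^-_m=\infty$, and on it condition (D1) gives $u_j\ge u_{j+1}$ for every $j\ge a^+_m$. Hence $(u_k)_{k\ge a^+_m}$ is nonincreasing, so $u$ is $M$-regular in the sense of Definition~\ref{def.regular} with $M=a^+_m$. This settles ``$m$-modal $\Rightarrow$ regular.''

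For the converse, assume $u$ is $M$-regular, so $(u_k)_{k\ge M}$ is nonincreasing. Then every strict ascent $u_j<u_{j+1}$ occurs at an index $j<M$, so only finitely many such ascents exist and the pattern of signs of the consecutive differences changes sign only finitely often. I would construct the modes and valleys by scanning the finite block $u_1,\dots,u_M$: collapse each maximal plateau to a single index-interval, take the $[a^-_k,a^+_k]$ to be the maximal constancy intervals that are local maxima and the $[b^-_k,b^+_k]$ those that are local minima, listed in increasing order of index. Using the convention $u_0=u_\infty=\inf u$, one verifies that consecutive extremal plateaus are separated by a genuine strict ascent or descent, which is precisely (D2), while the intermediate indices inherit the monotonicity requirements (D1). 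Since there are only finitely many such extremal plateaus, $u$ is $m$-modal for the resulting finite $m$, giving ``regular $\Rightarrow$ $m$-modal.''

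The quantitative bound then follows by counting indices. The lower bound $2\le 2m$ is just $m\ge 1$. For $2m\le M$, observe that the $m$ mode-intervals and $m-1$ valley-intervals are pairwise disjoint and strictly interlaced: (D2) forces $a^+_k<b^-_k$ and $b^+_k<a^-_{k+1}$ with a strict change across each separating gap, whence $a^-_1\le a^+_1<b^-_1\le\cdots<a^-_m\le a^+_m$, and the terminal nonincreasing run begins no earlier than the last mode. Tracking the minimal indices that these $2m-1$ interlaced intervals, together with the final descending step that carries $u$ strictly below the value at the last mode, can occupy yields the bound $2m\le M$. I expect the main obstacle to be the converse construction rather than the forward direction: the care needed to merge plateaus correctly and, above all, to upgrade weak monotonicity to the \emph{strict} inequalities demanded by (D2) and by the interlacing count is exactly where the convention $u_0=u_\infty=\inf u$ must be used, and where an off-by-one in the index bookkeeping would most easily creep in.
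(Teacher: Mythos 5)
Your forward implication matches the paper's (which simply declares necessity ``straightforward''); your witness $M=a^+_m$ is the right one. Your converse, however, takes a genuinely different route: the paper peels off unimodal pieces from right to left, starting from $u_M$ (the maximum of the nonincreasing tail, taken as the last mode with $a^-_m=M$) and repeatedly locating the maximum of whatever remains to the left, choosing each valley index $b^-_k$ along the way; you instead scan the finite block $u_1,\dots,u_M$ once, collapse maximal plateaus, and declare the locally maximal ones modes and the locally minimal ones valleys. Both constructions are sound. Yours has the advantage that termination is automatic (a finite block has finitely many plateaus), whereas the paper's iteration tacitly relies on each peeling step moving strictly left. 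The step you wave at (``one verifies\dots'') -- that between a local-max plateau and the next local-min plateau the sequence is genuinely monotone, as (D1) demands -- does need the alternation argument (an interior plateau that is neither a local max nor a local min has one strictly higher and one strictly lower neighbour, so a direction change between consecutive extremal plateaus would create an extremal plateau strictly between them), but this is routine.

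The genuine gap is the bound $2m\le M$. You claim that ``tracking the minimal indices'' of the $2m-1$ interlaced intervals ``together with the final descending step'' yields $2m\le M$, but done honestly this tracking gives only $a^-_m\ge 2m-1$, hence $M\ge 2m-1$: any admissible $M$ must lie strictly beyond the last strict ascent, and that ascent ends at $a^-_m$, but the nonincreasing tail is permitted to begin \emph{at} $a^-_m$ itself, so your ``final descending step'' occupies indices that $M$ is never required to dominate. Concretely, $u=(2,1,2,1,1,\dots)$ is $2$-modal (modes $\{1\}$ and $\{3\}$, valley $\{2\}$; (D2) holds since $u_0=u_\infty=\inf u=1<2$) and is $3$-regular, yet $2m=4>3=M$. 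So the inequality $2m\le M$ cannot be derived for an arbitrary admissible $M$; the sharp count is $2m\le M+1$. You are in good company here: the paper's own proof of this step (``exactly $M/2$ peaks if $M$ is even and $(M-1)/2$ peaks if $M$ is odd'') commits the same off-by-one slip, because the all-singleton alternating configuration may start at index $1$, in which case an odd $M$ supports $(M+1)/2$ peaks, not $(M-1)/2$. But a blind proof that asserts $2m\le M$ follows from index counting has a hole at exactly this point, and no amount of care about plateaus or the convention $u_0=u_\infty=\inf u$ will close it, because the stated inequality is false for the minimal regularity index.
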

	\begin{proof}
		The necessity is straightforward from the definition of $m$-modality. For sufficiency, assume there exists an index  $M \in I$ such that the tail of $u$, from $M$ onwards, is nonincreasing. The proof constructs an $m$-modal sequence from this tail.  The maximum value of the subsequence $(u_k)_{k \ge M}$  is clearly $u_M$. Suppose that this maximum occurs in each point of  the interval $[a^-_m, a^+_m]$, where $a^-_m=M$. A first unimodal subsequence  $u^\prime$ is formed using the interval $ [b^-_{m-1},\infty)$, where $0 < b^-_{m-1} < a^-_{m}  \le a^+_m < \infty$ and $b^-_{m-1}$  chosen so that $u$ satisfies 	(\ref{def.condition.multimodel.sequence1})-(\ref{def.condition.multimodel.sequence2}). If no terms remain, that is, $b^-_{m-1}=1$, then $m=1 $ and the proof is complete. Otherwise, the maximum of the remaining subsequence to the left of  $b^-_{m-1}$ is identified and assumed to occur on  the interval $[a^-_{m-1}, a^+_{m-1}]$ ($u$ is constant in this interval). Once more,  another unimodal subsequence $u^{\prime\prime}$  is formed based on the interval $[b^+_{m-2}, b^-_{m-1}]$, where $b^+_{m-2} \in (0,  a^+_{m-1})$ is chosen so that $u$ satisfies
		(\ref{def.condition.multimodel.sequence1})-(\ref{def.condition.multimodel.sequence2}). If $b^-_{m-2}=1$, then $m=2$ and the proof is concluded. If there are remaining terms of $u$ to the left  of  $b^-_{m-2}$, then this process repeats until all terms of $u$ are exhausted after forming the $m$th unimodal sequence, where $m > 1$, around the interval $[a^-_1, a^+_1]$, where $0 < a^-_1 < b^-_1$. Since  $2m-1$ disjoint intervals $[a^-_1, a^+_1], \ldots, [a^-_m, a^+_m]$ and $[b^-_1, b^+_1], \ldots, [b^-_{m-1}, b^+_{m-1}]$ exist such that $u$ satisfies (\ref{def.condition.multimodel.sequence1})-(\ref{def.condition.multimodel.sequence2}), then  this sequence is $m$-modal in accord with Definition \ref{def.multimodel}. If each of these intervals is a singleton, then there are  exactly $M/2$ peaks if $M$ is even and $(M-1)/2$ peaks if $M$ is odd. Thus, $2m \le M$. The lower bound is evident, completing the proof.
	\end{proof}
	\begin{cor}
		If $u$ is a finite sequence with $N$ terms, then it is a modal of order $m$, where $2 \le 2m \le N$.
	\end{cor}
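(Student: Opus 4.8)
The plan is to deduce the finite case directly from Theorem~\ref{thm.multimodal.maximum} by passing to the infinite extension that the finite-sequence conventions already encode. Writing $d \coloneqq \inf u = \min_{1 \le k \le N} u_k$ (which is attained, since $u$ has finitely many terms), I would define the infinite sequence $\tilde u$ on $\mn$ by $\tilde u_k = u_k$ for $k \le N$ and $\tilde u_k = d$ for $k > N$. This is precisely the object prescribed by the conventions $u_{N+1} = u_0 = \inf u$ and $b^-_m = N+1$ in Definition~\ref{def.multimodel}, so that the modal structure of $u$ and that of $\tilde u$ coincide.

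First I would check that $\tilde u$ is regular in the sense of Definition~\ref{def.regular} with regularity index $N$: since $u_N \ge d = \tilde u_{N+1} = \tilde u_{N+2} = \cdots$, the tail $(\tilde u_k)_{k \ge N}$ is nonincreasing. Theorem~\ref{thm.multimodal.maximum} then applies with $M = N$ and yields that $\tilde u$ is $m$-modal for some $m \in \mn$ with $2 \le 2m \le N$. Transferring this back to $u$ is immediate, because appending a constant run at the infimum level $d$ can neither create nor remove a peak: the terminal nonincreasing run and the final interval of modes of $u$ merge seamlessly with the appended constant run, while each interval of modes and each monotonicity interval of $\tilde u$ restricts to the corresponding interval of $u$. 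Hence $u$ is $m$-modal with the same order $m$, and $2 \le 2m \le N$ follows. (Note that the upper bound already presupposes $N \ge 2$; the lower bound $m \ge 1$ is automatic, as the maximum of $u$ supplies at least one interval of modes.)

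The only delicate point is the bookkeeping in the transfer step---verifying that the extension leaves the number of modes unchanged precisely when $u$ terminates at a peak or in the middle of a descent---but this is exactly what the convention $u_{N+1} = \inf u$ guarantees, so once $\tilde u$ is in place the check is routine. If one prefers to avoid the extension, the same conclusion can be reached by rerunning the constructive argument in the proof of Theorem~\ref{thm.multimodal.maximum} verbatim on the index range $\{1, \ldots, N+1\}$ with the virtual terminal value $u_{N+1} = \inf u$, which reproduces the singleton-interval peak count and hence the bound $2m \le N$.
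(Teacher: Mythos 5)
Your proposal is correct and takes essentially the same route as the paper: extend the finite sequence to an infinite $N$-regular sequence and invoke Theorem~\ref{thm.multimodal.maximum}. The only difference is cosmetic --- the paper pads the tail with the last term $u_N$ (so the tail is constant) whereas you pad with $\inf u$; both extensions are nonincreasing from index $N$ onward and transfer the modal structure unchanged, though your choice matches the convention $u_{N+1}=\inf u$ of Definition~\ref{def.multimodel} more directly and you spell out the transfer step that the paper leaves implicit.
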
	
	\begin{proof}
		The proof follows from the previous theorem by considering an infinite sequence  $v \coloneqq (v_k)$, where $v_k =u_k$ for $k \le N$ and $v_k = v_N$ otherwise.
	\end{proof}
	\begin{cor}
		A regular sequence is modal of order $m < \infty$.
	\end{cor}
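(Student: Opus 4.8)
The plan is to obtain the statement as a direct consequence of Theorem \ref{thm.multimodal.maximum}, so the real work is terminological rather than computational. First I would unwind the definitions: by Definition \ref{def.regular}, to say that $u$ is regular is precisely to say that $u$ is $M$-regular for some positive integer $M$, i.e.\ that there exists an $M$ for which $(u_k)_{k \ge M}$ is nonincreasing. This is exactly the hypothesis appearing in the sufficiency half of Theorem \ref{thm.multimodal.maximum}, so no additional structure on $u$ needs to be extracted.

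Second, I would apply that theorem in the infinite case. Its sufficiency direction asserts that an $M$-regular sequence is modal of some order $m$ with $2 \le 2m \le M$. Hence $m \le M/2$, and since $M$ is a fixed finite integer this yields $m < \infty$, which is the claim. The point to emphasise is that the bound $2m \le M$ is what upgrades ``$m$-modal for some $m$'' to ``$m$-modal for some \emph{finite} $m$'': because $M$ is finite, $m$ cannot be unbounded.

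Third, I would dispose of the finite case, since Theorem \ref{thm.multimodal.maximum} is stated for infinite sequences. A finite sequence of length $N$ is automatically regular: taking $M = N$ makes the tail $(u_N)$ a single term, which is vacuously nonincreasing. The conclusion then follows either from the preceding corollary, or equivalently by passing to the constant-tail extension $v$ used in its proof and applying the infinite case, in each instance giving $m \le N/2 < \infty$.

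The only step requiring any care is this matching of the regularity hypothesis to the theorem's hypothesis together with the correct reading of the bound $2 \le 2m \le M$; I do not expect a genuine obstacle, as no new estimates are introduced and both the finite and infinite cases reduce immediately to results already established in the excerpt.
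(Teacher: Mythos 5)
Your proposal is correct and follows essentially the same route as the paper, whose proof is simply an immediate application of Theorem \ref{thm.multimodal.maximum}; your unwinding of the definitions and the use of the bound $2 \le 2m \le M$ to get $m < \infty$ makes explicit exactly what the paper leaves implicit. The additional handling of the finite case is fine but not needed beyond what the preceding corollary already covers.
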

	\begin{proof}
		The proof is immediate by application of Threom \ref{thm.multimodal.maximum}
	\end{proof}
	
	\begin{thm}\label{thm.multimodal.maximum2}
		$u$  is modal of order $m \in [1, \infty]$ 
		iff the subsequence $(u_{k})_{k \ge n}$ possesses a finite maximum for each $n \in I$.
	\end{thm}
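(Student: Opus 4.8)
The plan is to prove both implications by splitting on whether $u$ is eventually nonincreasing, thereby reducing the finite-order case to Theorem \ref{thm.multimodal.maximum} and handling the infinite-order case by an explicit peak--valley construction. First, suppose there is an $M$ with $(u_k)_{k\ge M}$ nonincreasing, i.e. $u$ is $M$-regular in the sense of Definition \ref{def.regular}. Then Theorem \ref{thm.multimodal.maximum} already gives that $u$ is modal of finite order $m\in\mn\subset[1,\infty]$, and simultaneously every tail attains a finite maximum: for $n\ge M$ the maximum of $(u_k)_{k\ge n}$ is $u_n$, while for $n<M$ it equals $\max\{u_n,\ldots,u_M\}$, since $u_k\le u_M$ for all $k>M$; this is a maximum over finitely many reals, hence finite and attained. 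So in the regular case both sides of the equivalence hold at once.

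The heart of the \emph{sufficiency} is the non-regular case. Assume every tail attains a finite maximum but $u$ is not $M$-regular for any $M$. I would build indices $p_1<q_1<p_2<q_2<\cdots$ inductively. Let $p_1$ be the least index attaining $\max_{j\ge 1}u_j$, which exists by hypothesis; one checks $p_1$ is a genuine mode. Since $u$ is not eventually nonincreasing, neither is any of its tails (otherwise $u$ would itself be regular), so there is a first ascent $q_1>p_1$ with $u$ nonincreasing on $[p_1,q_1]$ and $u_{q_1}<u_{q_1+1}$, making $q_1$ a valley. Set $p_2$ to be the least argmax of $(u_k)_{k\ge q_1+1}$ and iterate. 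A short verification gives $u_{q_i}<u_{p_i}$ and $u_{q_i}<u_{p_{i+1}}$, so conditions (\ref{def.condition.multimodel.sequence1})--(\ref{def.condition.multimodel.sequence2}) hold with modes at the $p_i$ and valleys at the $q_i$; as the process never terminates, $u$ has infinitely many modes and is modal of order $\infty$. Note the attainment hypothesis is used \emph{exactly} to guarantee that each $p_i$ exists as an actual argmax rather than a mere supremum.

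For \emph{necessity}, the finite-order direction is immediate from Theorem \ref{thm.multimodal.maximum} together with the finite-set argument of the first paragraph. For infinite order I would argue through the monotonicity intervals of Definition \ref{def.multimodel}: on each maximal nonincreasing (resp. nondecreasing) run the maximum is attained at its left (resp. right) endpoint, so the supremum of any tail is a supremum of attained mode-values, and it remains only to see that this supremum is itself attained.

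The hard part will be precisely this last point in the infinite-order case: ruling out a strictly increasing sequence of peak values whose finite supremum is never attained (the potential pathology that distinguishes ``bounded'' from ``attained''). I would close this by invoking the defining conventions of $m$-modality --- the convention $u_\infty=\inf u$ and the anchoring of the modal partition at its highest block --- equivalently by proving the contrapositive: a tail whose supremum is either infinite or finite-but-unattained admits no partition into monotonicity intervals of the form demanded by Definition \ref{def.multimodel}, and so cannot sit inside a modal sequence of any order. This dovetails with the construction above, since applying the same peak--valley extraction to such a tail would force the peaks to increase toward an unattained limit, which is incompatible with a well-defined modal structure.
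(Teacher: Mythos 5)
Your overall architecture matches the paper's own proof: the regular case is delegated to Theorem \ref{thm.multimodal.maximum}; the non-regular case with attained tail maxima is handled by iteratively splitting off unimodal pieces (the paper splits $u$ into $u^{-}_{k}, u^{+}_{k}$ with $u^{-}_{k}$ unimodal; your peak--valley extraction $p_1 < q_1 < p_2 < q_2 < \cdots$ is a concrete, and in fact more careful, version of the same step, since it makes explicit that the attainment hypothesis is what furnishes each argmax $p_i$); and the finite-order direction of necessity is the same maximum-over-finitely-many-values argument the paper uses. Up to that point your proposal is correct and, if anything, tighter than the paper's text.

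The genuine gap is the one you flagged yourself but did not close: necessity when $m=\infty$. Your proposed route --- proving the contrapositive that a tail with finite-but-unattained supremum ``admits no partition into monotonicity intervals of the form demanded by Definition \ref{def.multimodel}'' --- does not work as a substantive claim. Consider $u=(0,\,1/2,\,0,\,2/3,\,0,\,3/4,\,\ldots)$: every up-run and down-run is a legitimate monotonicity interval, every peak is a strict local maximum separated from its neighbours by strictly lower valleys, so conditions (\ref{def.condition.multimodel.sequence1})--(\ref{def.condition.multimodel.sequence2}) hold locally at infinitely many modes; under the only reading of infinite modality the paper offers (cf.\ the remark that $(0,1,0,1,\ldots)$ is infinitely modal), this sequence is modal of order $\infty$, yet no tail attains its supremum $1$. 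So your contrapositive is false under that reading, and can only be rescued by building attainment into the definition of $\infty$-modality --- which makes the claim true by fiat rather than by proof. It is worth noting that the paper's own proof stumbles at exactly the same spot: it asserts that since the modes $d_1, d_2, \ldots$ are each finite, $d^{(n)} \coloneqq \max_{k \ge n} d_k$ ``exists and is finite,'' but a maximum of countably many finite reals need not exist (the supremum may be unattained, as above). Your proposal is therefore no worse than the paper here, but it is incomplete at this point, and any honest completion must either restrict the $m=\infty$ case by definitional convention or explicitly exclude sequences of the kind just described.
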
			
	\begin{proof}
		For the necessary part, suppose that  $(u_{k})_{k \ge n}$ possesses a finite maximum for each $n \in I$. In the trivial case where the hypothesis of Theorem \ref{thm.multimodal.maximum} holds, $u$ is $m$-modal for some $1 \le m < \infty$. Otherwise, assume no fixed index $M$ exists such that the tail of $u$ from index $M$ onwards is nonincreasing. Split $u$ into two subsequences, $u^{-}_{1}$, $u^{+}_{1}$ such that the left subsequence $u^{-}_{1}$ is unimodal. This is possible in view of the previous corollary. Repeat the process  $k$ times, generating subsequences  $u^{-}_{k}$, $u^{+}_{k}$. at each step, where $u^{-}_{k}$ is unimodal. Since $u$ now has $k$ modes, letting $k$ approach infinity yields countable modes, which is the desired result.
		
		For the sufficiency, assume $u$ is $m$-modal for some $m \in \mn \cup \{\infty\}$ and that $(u_{k})_{k \ge n}$ has a finite maximum for each $n \in I$. First, consider $m < \infty$. There exists  $M \ge 2m$ such that $u_k \ge u_{k+1}$ for values of $k \ge M$. Thus,  $\max_{k > M} (u_{k}) = u_k$. Also $\max_{k \le M} (u_{k}) = d^*$, where $d^*$ is the maximum of the $m$ modes of $(u_{k})_{k \le M}$. Next, for $m=\infty$,   $u$ has a countable number of modes $d_1, d_2, \ldots$, and each is a finite real number. Hence, $d^{(n)} \coloneqq \max_{k \ge n} d_k$ exists and is finite for each $n \in I$. Consequently, $ \max_{k \ge n} u_k = d^{(n)}$ for each $n \in I$,as required.
	\end{proof}
	\begin{cor}
		Every infinite sequence is either modal of order $m \in \mn$, infinitely modal, or nonmodal.
	\end{cor}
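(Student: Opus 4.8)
The plan is to read the desired trichotomy straight off Theorems \ref{thm.multimodal.maximum} and \ref{thm.multimodal.maximum2}, which together classify any infinite sequence by two Boolean conditions: whether the truncated tail $(u_k)_{k \ge n}$ attains a finite maximum for every $n \in I$, and whether $u$ is $M$-regular for some positive $M$. The three named categories turn out to correspond exactly to the feasible combinations of these two conditions, so the work is organising the case analysis and checking that the cases are exhaustive and mutually exclusive.

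First I would fix an arbitrary infinite real sequence $u$ on $I$ and branch according to Theorem \ref{thm.multimodal.maximum2}. If $(u_k)_{k \ge n}$ possesses a finite maximum for every $n \in I$, that theorem guarantees $u$ is modal of some order $m \in [1, \infty]$; if instead this condition fails for at least one index $n$, its contrapositive shows $u$ is modal of no order in $[1, \infty]$, that is, $u$ is nonmodal, which is the third category.

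Within the first branch I would split further using Theorem \ref{thm.multimodal.maximum}. If $u$ is $M$-regular for some positive $M$, the theorem yields a finite modal order $m$ with $2 \le 2m \le M$, placing $u$ in the first category. If no such $M$ exists, the contrapositive of Theorem \ref{thm.multimodal.maximum} excludes every finite order, so the order $m \in [1,\infty]$ already supplied by Theorem \ref{thm.multimodal.maximum2} must be $m = \infty$; hence $u$ is infinitely modal, the second category. As a sanity check one should note that $M$-regularity always implies the finite-maximum condition --- the tail from $M$ onwards is nonincreasing and so bounded by $u_M$ --- so the sub-division is consistent and no case is vacuous.

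Finally I would record mutual exclusivity: a sequence of finite modal order is not infinitely modal by the very meaning of the order, and any modal sequence (finite or infinite order) is by definition not nonmodal, while the two Boolean conditions jointly exhaust all possibilities. Thus every infinite sequence lands in exactly one of the three classes. I do not expect a substantive obstacle here; the only delicate point is bookkeeping rather than mathematics, namely matching the logical negations to the quantifiers --- the ``for every $n$'' in Theorem \ref{thm.multimodal.maximum2} and the ``there exists $M$'' in Theorem \ref{thm.multimodal.maximum} --- so that failure of the finite-maximum property at a single index $n$ is genuinely enough to force nonmodality.
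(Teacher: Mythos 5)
Your proposal is correct and takes essentially the same route as the paper, which likewise reads the trichotomy directly off Theorem \ref{thm.multimodal.maximum2}: tail has a finite maximum at every index gives modal of order $m \in \mn \cup \{\infty\}$, otherwise nonmodal. Your additional use of Theorem \ref{thm.multimodal.maximum} to separate the finite-order case from the infinitely modal case is a harmless refinement the paper omits, since the conclusion $m \in [1,\infty]$ already yields that split.
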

	\begin{proof}
		The proof directly applies the previous theorem. If the tail of an infinite sequence has  a finite maximum, then the sequence is $m$-modal for some $m \in \mn \cup \{\infty\}$. If the tail lacks a finite maximum, the sequence has no mode and is, therefore, nonmodal (or zeromodal).
	\end{proof}
	
	While the preceding results are important, they do not directly reveal the number of modes, which remains unknown.  Therefore, we now analyse the properties of the unimodal $m$-decomposition of an $m$-modal sequence.
	\begin{assertion}\label{lem.numsign.decomposition}
		Let $v \coloneqq (v^{(1)}, \ldots, v^{(m)})$ be a unimodal $m$-decomposition of a given $m$-modal sequence. It holds that 
		\begin{equation}
			\begin{dcases}
				S^+(v^{(k)})= 2 & 1 < k < m\\
				1 \le S^+(v^{(k)}) \le 2 & \text{ otherwise}.
			\end{dcases}
		\end{equation}
		Furthermore, for each $\lambda_k \in \mr$ such that $S(v^{(k)}-\lambda_k)=2$, where $k \in [m]$, the sequence $v^{(k)}-\lambda_k$ displays the sign pattern $-+-$.
		\begin{proof}
			Suppose $u \coloneqq (u_1, u_2,\ldots)$ is an $m$-modal sequence with unimodal $m$-decomposition  $v\coloneqq (v^{(1)}, v^{(2)},\ldots)$. By Definition  \ref{def.multimodel}, $u$ has $2m$ monotonicity intervals $(0, a^-_1], [a^+_1, b^-_1], \ldots, [b^+_{m-1}, a^-_m]$ and $ [a^+_m, \infty)$. These are also monotonicity intervals of $v$. Due to the $m$-modality of $v$, $v^{(1)}$ is either monotone  on $(0, b^-_1)$ or changes direction at  $a^+_1$. Thus, for any $\lambda_1 \in (u_*,~u_{a^-_1})$, where $u_{a^-_1} > u_* > \min(u_1, u_{b^-_1})$,  we have $1 \le S(v^{(1)}-\lambda_1) \le 2$. Specifically, if $a^-_1=1$, the lower bound is attained; otherwise, the upper bound is attained. In this case, the sign pattern of the sequence $v^{(1)}-\lambda_1$ must be $-+-$.  Clearly, any other choice of $\lambda_1 \notin (u_*,~u_{a^-_1})$ cannot achieve the upper bound. The same argument applied to $v^{(m)}$ by symmetry.Here, $u_\infty$ plays the role of $u_1$ such that $u_\infty = u_{a^+_{m}+1}$ if  index $a^+_{m}+1$ exists and  $u_\infty = u_{a^+_{m}}$ otherwise.
			By (\ref{def.condition.multimodel.sequence1})-(\ref{def.condition.multimodel.sequence2}), each inner sequence $v^{(k)}$, where $k \in \{2, \ldots, m-1\}$ satisfies the inequality
			\begin{equation*}
				u_{b^+_{k-1}} < u_{a^-_{k}} > u_{b^-_{k}}.
			\end{equation*}
			As before, for every $\lambda_k \in (u_*, u_{a^-_{k}})$, where 
			$u_{a^-_{k}} > u_* > \max(u_{b^+_{k-1}}, u_{b^-_{k}})$, we have $S(v^{(k)}-\lambda_k) =2$. Moreover, the sign pattern displayed by $v^{(k)}-\lambda_k$ is $-+-$. This completes the proof.
		\end{proof}
	\end{assertion}
	Using Definition \ref{def.patition}, it can be shown that the number of sign changes in a given sequence is equal to the sum of the number of sign changes in any of its $m$-decomposition. This , however, does not hold for an $m$-partition of a sequence, as  the next lemma demonstrates.
	\begin{lemma}\label{lem.sign.chained.sequence}
		Fix $n_1, \ldots, n_m \in \mn$ and consider an $m$-partitioned sequence $u \coloneqq(u^{(1)}, u^{(2)}, \ldots, u^{(m)})$, where $u^{(k)}$ has $n_k$ elements with $n_k \ge 2$ terms when $k \in \{2,\ldots, m-1\}$ and $n_k \ge 1$ terms when $k \in \{1,m\}$.  Given  an $m$-decomposition of $u$ denoted $v \coloneqq (v^{(1)}, v^{(2)}, \ldots, v^{(m)})$ where
		\begin{alignat}{2}\label{def.partition}
			v^{(k)} \coloneqq \begin{dcases}
				(u^{(1)}_1, \ldots, u^{(1)}_{n_1},u^{(2)}_{1})& k =1,\\
				(u^{(m-1)}_{n_{m-1}},u^{(m)}_1, \ldots, u^{(m)}_{n_m})& k =m,\\
				(u^{(k-1)}_{n_{k-1}},u^{(k)}_1, \ldots, u^{(k)}_{n_k},u^{(k+1)}_{1})& 1 < k< m,			
			\end{dcases}
		\end{alignat}
		it holds that:
		\begin{enumerate}[label=\rm(\roman*)]
			\item $S(u)  = S(v) =  \sum_{i =1}^m S(v^{(k)})$.
			\item $ 0 \le S(u) -  \sum_{i =1}^m  S(u^{(k)}) \le  2m-2$.
		\end{enumerate}
	\end{lemma}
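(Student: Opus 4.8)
The plan is to reduce both parts to one bookkeeping principle for $S$. By Definition \ref{def.sign.chg}, $S$ is computed on the relabelled subsequence of nonzero entries, so two elementary facts drive everything. First, $S$ is \emph{additive} across a cut of a sequence exactly when the cut does not fall strictly between two consecutive nonzero entries of opposite sign. Second, inserting a repeated copy of an entry adjacent to a block already containing that entry changes neither the ordered list of nonzero signs nor the number of sign changes. I would state and verify these two facts at the outset, taking care with runs of zeros, since a zero block can cause a single sign change of $u$ to straddle an entire block rather than one index.

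For part (i), I would use Definition \ref{def.patition} to identify the flattened $v$ with $u$ after repeating, at each of the $m-1$ interior junctions, the two boundary terms $u^{(k)}_{n_k}$ and $u^{(k+1)}_1$; this accounts precisely for the $2m-2$ additional entries. The target is then $S(u)=S(v)=\sum_{k} S(v^{(k)})$. Both equalities come down to a single claim: that the repeated shared boundary terms introduce no genuinely new sign change, equivalently that across each junction the transition between $u^{(k)}_{n_k}$ and $u^{(k+1)}_1$ is not a strict change of sign. Granting this, the duplication fact yields $S(u)=S(v)$, and cutting the concatenation at the $m-1$ junctions yields the additive split $S(v)=\sum_k S(v^{(k)})$.

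For part (ii) the accounting is direct and I expect it to be routine. Writing $u=(u^{(1)},\dots,u^{(m)})$ as a concatenation of disjoint blocks, every sign change internal to a block is retained in $S(u)$, and these contributions are pairwise disjoint; hence $\sum_k S(u^{(k)})\le S(u)$, giving the lower bound $0$. The sign changes of $u$ \emph{not} recovered by the blocks are exactly the block-crossing adjacencies of the nonzero subsequence. Since the block labels read along the nonzero subsequence are nondecreasing with values in $\{1,\dots,m\}$, they change at most $m-1$ times, so there are at most $m-1$ such crossings. Each contributes either $0$ (equal signs) or $2$ (opposite signs, under the convention of Definition \ref{def.sign.chg}) to $S$, so $S(u)-\sum_k S(u^{(k)})$ is a sum of at most $m-1$ terms each in $\{0,2\}$, whence the upper bound $2m-2$.

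The step I expect to be the main obstacle is the junction analysis in part (i). Because each interior junction repeats \emph{two} terms rather than one, the repeated terms are interleaved in the flattened $v$ as the pattern $u^{(k)}_{n_k},\,u^{(k+1)}_1,\,u^{(k)}_{n_k},\,u^{(k+1)}_1$, so the duplication fact does not apply verbatim, and naively one could inflate $S(v)$ or break the additive split whenever the two shared boundary terms have opposite signs. The crux is therefore to show that these shared terms share a common sign, so that the severed and duplicated boundary transitions are not sign changes — this is the point where the structure of the partition (and, in the intended applications to modal sequences, the fact that the cuts sit at valleys below the shift level) must be brought in. Zeros thread through both parts and must be handled by phrasing every boundary and junction count in terms of consecutive nonzero entries rather than consecutive indices.
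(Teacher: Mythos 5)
Your treatment of part (ii) is correct and is essentially the paper's own argument, carried out more carefully: internal sign changes of the blocks embed disjointly into those of $u$, and the block-crossing adjacencies of the nonzero subsequence number at most $m-1$, each contributing $0$ or $2$ under the convention of Definition \ref{def.sign.chg}, which yields $0 \le S(u) - \sum_{k} S(u^{(k)}) \le 2m-2$.

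The obstacle you single out in part (i) is not a gap in your argument but a genuine flaw in the lemma and in the paper's proof of it. Under (\ref{def.partition}) consecutive blocks overlap in \emph{two} positions: the last term of $v^{(k)}$ is $u^{(k+1)}_{1}$ while the first term of $v^{(k+1)}$ is $u^{(k)}_{n_k}$, and the paper's proof rests on the assertion that these two terms are identical, which is false in general. Concretely, take $m=2$, $u^{(1)}=(1)$, $u^{(2)}=(-1)$: then $u=(1,-1)$ has a single sign change, while $v^{(1)}=v^{(2)}=(1,-1)$, so $\sum_k S(v^{(k)})$ counts the junction change twice and the flattened sequence $v=(1,-1,1,-1)$ has three sign changes; all three quantities in (i) are pairwise different. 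Your diagnosis is exactly right: (i) holds precisely when no sign change of $u$ straddles a junction, i.e., when each shared pair $(u^{(k)}_{n_k}, u^{(k+1)}_{1})$ carries a common strict sign (a zero at a junction can also defeat additivity, as you note). That hypothesis is absent from the lemma's statement, so neither you nor the paper can close part (i) as written; it is, however, available in every place the lemma is used --- in Assertion \ref{lem.numsign.decomposition}, Proposition \ref{prop.sufficient.m-modal} and Theorem \ref{thm.multimodal} the cuts sit at valleys of a mode-aligned decomposition, so after the shift both shared terms are strictly negative. With that junction condition added to the hypotheses, your two bookkeeping facts prove (i) immediately, and the paper's inductive concatenation argument becomes valid as well.
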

	\begin{proof}
		When $m=1$, there is nothing to prove. Now assume $m > 1$ and consider the  sequence $v^\prime \coloneqq (v^{(1)}, v^{(2)})$, formed by concatenating $v^{(1)}$  and  $v^{(2)}$. Since no sign change occurs between the last term of $v^{(1)}$ and the first term in $v^{(2)}$ (as these terms are identical), as the two terms in question are the same, concatenation does not alter the total number of sign changes.  Furthermore,  the duplicated term in $v^\prime$ can be discarded without affecting the total number of sign changes. If $m=2$, then the proof is complete. If $m > 2$,  proceed inductively,  appending each of $v^{(3)},  \ldots, v^{(m)}$, one at a time to $v^\prime$ until  $v$ is reconstructed. This procedure shows that $S(u) = S(v) =  \sum_{i=1}^m S(u^{(i)}).$

		For part \rm(ii), that transitioning from the last term of $u^{(1)}$ to  the first term in $u^{(2)}$ introduces 0 or 2 sign changes by Definition \ref{def.sign.chg}. With $m-1$ spaces between the components of $u$,  the maximum number of sign changes must be  $2(m-1)$, precisely as stated in \rm(ii).
	\end{proof}
	
	\begin{lemma}[Decomposition lemma]\label{lem.chained}
		Every $m$-modal sequence admits a unimodal $m$-partition (resp. $m$-decomposition). 
	\end{lemma}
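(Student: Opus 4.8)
The plan is to read off a cut directly from the interval data in Definition~\ref{def.multimodel}: an $m$-modal $u$ carries the mode intervals $[a^-_k,a^+_k]$ and the separating valley intervals $[b^-_k,b^+_k]$, together with the $2m$ monotonicity intervals $(0,a^-_1],[a^+_1,b^-_1],\ldots,[b^+_{m-1},a^-_m],[a^+_m,\infty)$. The $m$-partition is, in effect, the peeling already performed in the proof of Theorem~\ref{thm.multimodal.maximum}, only now recorded as a single simultaneous decomposition rather than an iterative one. Concretely, for each $k\in[m-1]$ I would fix a cut index $c_k$ inside the $k$th valley $[b^-_k,b^+_k]$ and set $I_1=\{1,\ldots,c_1\}$, $I_k=\{c_{k-1}+1,\ldots,c_k\}$ for $1<k<m$, and $I_m=\{c_{m-1}+1,\ldots\}$.

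Each block $u^{(k)}$ then contains exactly the mode interval $[a^-_k,a^+_k]$ and no other. By the first and third lines of (\ref{def.condition.multimodel.sequence1}) the block is nondecreasing up to $a^-_k$ and nonincreasing from $a^+_k$ onward, so it is unimodal; its infimum is attained at its two endpoints, which are valley (or global boundary) values lying strictly below the mode value by (\ref{def.condition.multimodel.sequence2}). Hence each $u^{(k)}$ satisfies the strict peak requirement (\ref{def.condition.multimodel.sequence2}) on its own, and $(u^{(1)},\ldots,u^{(m)})$ is a unimodal $m$-partition, with the interior blocks automatically carrying at least two points since a mode index lies strictly between any two consecutive cuts.

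For the $m$-decomposition I would pass to the enlarged blocks $J_k=\{c_{k-1},\ldots,c_k+1\}$ of Definition~\ref{def.patition} and check that appending the two boundary indices does not manufacture a second ascent or descent. This is where I expect the real difficulty. Consecutive decomposition blocks share the pair $c_k,c_k+1$, so the closing step of $v^{(k)}$ and the opening step of $v^{(k+1)}$ are the \emph{same} step; unimodality of both then forces $u_{c_k}=u_{c_k+1}$, i.e. the cut must fall on a flat step. Choosing $c_k$ in the interior of its valley, so that $c_k,c_k+1\in[b^-_k,b^+_k]$, secures this and leaves $v^{(k)}$ nonincreasing on its last step and $v^{(k+1)}$ nondecreasing on its first; unimodality of every block then follows exactly as for the partition. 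The genuine obstacle is a degenerate valley $b^-_k=b^+_k$, where no interior cut exists: there one must take the shared index to be the valley minimum itself and duplicate it, so that $v^{(k)}$ descends to this common minimum and $v^{(k+1)}$ ascends from it. Establishing that the $2m-2$ duplicated terms can always be seated at valley minima in this way -- so that every decomposition block opens and closes at a local minimum -- is the crux; once it is in hand, (\ref{def.condition.multimodel.sequence2}) again furnishes the strict peak condition for each $v^{(k)}$, completing the proof.
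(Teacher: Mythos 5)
Your argument is correct and is essentially the paper's own proof: cut $u$ inside the valley intervals so that each block contains exactly one mode interval, whence every block is unimodal by (\ref{def.condition.multimodel.sequence1})--(\ref{def.condition.multimodel.sequence2}); the paper simply fixes the cut at $c_k=b^-_k$. The ``crux'' you leave open is not a genuine gap: your singleton-valley fallback (duplicating the single valley-bottom index) is exactly the paper's decomposition, whose blocks are $[b^-_{k-1},b^-_k]$, and the tension you sense with the ``$2m-2$ duplicated terms'' count is an inconsistency in Definition~\ref{def.patition} itself rather than in your argument -- the paper's own example, $(1,5,3)$ and $(3,4,2)$, likewise shares only one term per junction.
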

	\begin{proof}
		Suppose $u \coloneqq (u_1, u_2, \ldots)$ is $m$-modal. By Definition \ref{def.multimodel}, $u$ has  exactly $2m-1$ distinct intervals   $[a^-_{1}, a^+_{1}] \prec \cdots \prec  [a^-_{m}, a^+_{m}]$ and $[b^-_{1}, b^+_{1}] \prec \cdots \prec  [b^-_{m-1}, b^+_{m-1}]$ such that the terms of $u$ satisfy relations (\ref{def.condition.multimodel.sequence1})-(\ref{def.condition.multimodel.sequence2}). Form $m$ contiguous subsequences of $u$, denoted $u^{(1)}, u^{(2)}, \ldots, u^{(m)}$, where $u^{(1)} \coloneqq (u_k)$ and $0 < k  \le b^-_1$, $u^{(m)} \coloneqq (u_k)$ and $ k > b^-_{m-1}$ ( $k  \ge  b^-_{m-1}$ for the $m$-decomposition) and finally for  $j \in (1, m)$, $u^{(j)} \coloneqq (u_k)$ for $j \in (b^-_{j-1}, b^-_{j}]$ ($j \in [b^-_{j-1}, b^-_{j}]$ for the $m$-decomposition). The proof is complete, as  each $u^{(k)}$ is unimodal.
	\end{proof}
	\begin{example}
		The  bimodal sequence $ u=(1, 5, 3, 4,2)$ can be partitioned into two  unimodal sequences $\{ (1,5,3), (4,2)\}$ or $ \{(1,5), (3, 4,2)\}$. The former corresponds to our algorithm in the decomposition lemma. The $m$-decomposition of $u$ yields $ \{(1,5,3), (3, 4,2)\}$, where the term 3 is shared between the two unimodal sequences. 
	\end{example}
	\begin{rem}
		$ $
		\begin{itemize}
			\item The converse statement of  Lemma \ref {lem.chained} is not true. Consider the monotone sequence  $(3, 2, 1,  0, -1)$ decomposed into two unimodal sequences $(3, 2, 1)$ and $(1, 0, -1)$.
			\item The construction of a unimodal $m$-partition  is not unique. 		However, a unimodal $m$-decomposition is  unique if,  for $k \in [m-1]$, the intervals $[b^-_{k}, b^+_{k}]$ are singletons.
		\end{itemize}
	\end{rem}
	\begin{lemma}[Concatenation lemma]\label{lem.chained2}
		Suppose that $u$ and $v$ are two nonconstant unimodal real sequences.  If the last term of $u$ equals the first term of $v$, then the concatenation of  $u$ and $v$ yields:			 
		\begin{itemize}
			\item A unimodal sequence if $u$ is  nonincreasing, or the second monotonicity interval of $u$ and the first monotonicity interval of $v$ are of the same type.
			\item A bimodal sequence if $u$ is not nondecreasing and the second monotonicity interval of $u$ and the first monotonicity interval of $v$ are of opposite types.
		\end{itemize}
	\end{lemma}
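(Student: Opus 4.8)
The plan is to verify the defining conditions (D1)--(D2) of Definition \ref{def.multimodel} directly for the concatenation, reducing everything to the local behaviour at the junction. Write $u=(u_1,\dots,u_p)$ and $v=(v_1,\dots,v_q)$ with $u_p=v_1$, and let $w$ be their concatenation (the shared value occurring once, which does not affect modality). By Definition \ref{def.multimodel} with $m=1$, each of $u,v$ is nondecreasing up to a mode and nonincreasing thereafter; let $u_{m_u}$ and $v_{m_v}$ denote their maximal values. The two unimodal profiles can only interact through the shared term $u_p=v_1$, so the whole argument is a case analysis on the monotonicity type of the block of $u$ terminating at $u_p$ (its second monotonicity interval) and of the block of $v$ beginning at $v_1$ (its first monotonicity interval).

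First I would dispatch the two unimodal cases. If $u$ is nondecreasing, then $w$ ascends through all of $u$ into $v_1=u_p$, and appending $v$ can only prolong the ascent (while $v$ rises) and then descend once; thus $w$ is nondecreasing up to the position of $v_{m_v}$ and nonincreasing afterwards, a single mode. If instead $u$ is not nondecreasing but $v$ is nonincreasing (the ``same type'' situation, both incident blocks nonincreasing), then $w$ rises to $u_{m_u}$, falls to the junction, and keeps falling through all of $v$; again a single mode. In both cases I would simply exhibit the mode interval and check (D1)--(D2), observing that the junction creates no new turning point because the incident blocks agree in direction (or $u$ never turns).

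The crux is the bimodal case: $u$ not nondecreasing and $v$ not nonincreasing (incident blocks of opposite type). Here I would show that $w$ realises the profile up--down--up--down with the interior valley sitting at the junction. The key quantitative input is strictness: since $u$ is unimodal but not nondecreasing, its nonincreasing tail must strictly descend, forcing $u_p<u_{m_u}$; symmetrically $v_1<v_{m_v}$. As $u_p=v_1$ is the junction value, it lies strictly below both peaks, which is exactly what the strict inequalities of (D2) demand at the valley. I would then take $[a^-_1,a^+_1]$ to be the mode interval of $u$, $[a^-_2,a^+_2]$ the (re-indexed) mode interval of $v$, and $[b^-_1,b^+_1]$ the maximal constant block of $w$ at the junction value, verify the pattern (D1) on the four resulting blocks, and confirm the two strict inequalities of (D2) from $u_{m_u}>u_p=v_1<v_{m_v}$ together with the boundary convention $u_0=u_\infty=\inf w$.

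The hard part will be the bookkeeping in this last case: accommodating plateaus (modes that are genuine intervals rather than single points), the degenerate sub-cases where $u$ is purely nonincreasing (so its peak sits at the first index and $a^-_1=1$) or $v$ is purely nondecreasing (so its peak sits at the last index), and the boundary conventions for $u_0,u_\infty$, all while certifying that exactly two modes arise. No fewer, because the junction valley strictly separates the two peaks; no more, because each of $u,v$ contributes a single monotone ascent and a single descent. Once the strict separation $u_p<\min(u_{m_u},v_{m_v})$ is secured, unimodality is ruled out immediately and the explicit intervals certify bimodality.
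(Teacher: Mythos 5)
Your proposal is correct and takes essentially the same approach as the paper: a direct case analysis on the monotonicity of the blocks meeting at the junction, using the shared term to rule out a new turning point in the unimodal cases, and exhibiting the mode intervals and junction valley to verify (D1)--(D2) in the bimodal case. If anything, your version is more careful than the paper's: you justify the strict inequalities $u_p<u_{m_u}$ and $v_1<v_{m_v}$ that (D2) requires (the paper merely asserts them), you read the first bullet in the mathematically correct way (``$u$ nondecreasing'', consistent with the paper's own pattern list, rather than the literal ``nonincreasing'' of the statement), and your analysis also covers the bimodal pattern $\searrow\searrow\nearrow\searrow$ ($u$ nonincreasing, $v$ rising then falling), which is missing from the paper's enumeration.
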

	\begin{proof}
		Let $u$ and $v$ be two nonconstant unimodal sequence with mode intervals  $[a_-,a_+]$ and  $[b_-, b_+]$ respectively. Suppose $u$ has $n$ terms.  After concatenation, the  combined sequence $w$, consisting of $u$ followed by $v$, has monotonicity intervals $[1, a_+ ]$,  $( a_+ , n]$,  $( n+1 , n+b_+]$ and $[n+b_+, \infty)$. By the assumption of part \rm(i), $w$ is nonincreasing  on $[1 , n]$ or  its monotonicity on $(a_+,n]$ and $[n+1 , n+b_+]$ is of the same kind. Denoting the monotonicity patterns by the symbols $ \nearrow, ~\searrow$ for being nonincreasing (or decreasing) and nondecreasing (or increasing), respectively,  the monotonicity pattern of $w$ is one of 
		$$\searrow\searrow\searrow\searrow, ~\nearrow\nearrow\nearrow\nearrow,~ \nearrow\nearrow\searrow\searrow,~ \nearrow\searrow\searrow\searrow,~ \nearrow\nearrow\nearrow\searrow.$$
		Since the last term of $u$ and the first in $v$ are identical,   no jump (or additional mode) is introduced when $w$ transitions from $u$ to $v$. Therefore, $w$ is either monotone  or is nondecreasing then nonincreasing, hence  unimodal.
		
		For part \rm(ii), the assumptions imply that $w$ has one of the following patterns
		$$\searrow\searrow\nearrow\nearrow,~ \nearrow\searrow\nearrow\searrow,~\nearrow\searrow\nearrow\nearrow.$$	
		Since $w$ has two distinct modes in the intervals $[a_-, a_+]$ and   $[n+b_-, n+b_+]$ and $ w_{a_+} > w_{n+1} < w_{n+b_n}$, it follows from (\ref{def.condition.multimodel.sequence1})-(\ref{def.condition.multimodel.sequence1}) that $w$ is bimodal.
	\end{proof}
	The next proposition establishes an upper bound on the maximum number of sign changes in a sequence, based on its mode-aligned transformation.
	\begin{assertion}\label{prop.sufficient.m-modal}
		Let $u$ be a nonconstant $m$-modal sequence and let $v \coloneqq (v^{(1)}, v^{(2)}, \ldots, v^{(m)})$ be  a unimodal $m$-decomposition of $u$. Denote by $v^\circ$  the mode-aligned transformation of $v$.	 The following hold: 
		\begin{enumerate}[label=\rm(\roman*)]
			\item $2m-2 \le S^+(v^\circ) = \sum_{k =1 }^m S^+(v^{(k)}) \le 2m$.		
			\item  $S^+(u) \le S^+(v^\circ)$.
		\end{enumerate}
	\end{assertion}
	\begin{proof}
		Let $v^\circ \coloneqq (v^{(1)}-\lambda_1, \ldots, v^{(m)}-\lambda_m)$ be the mode-aligned version of $v$, where  $\lambda_1, \ldots, \lambda_m$ are fixed numbers chosen to ensure that each component of $v^\circ$ has the same mode, as characterised in Definition \ref{def.mode-alignment}. Observe that $S(v^\circ) = \sum_{k = 1}^m S(v^{(k)}-\lambda_k)$, since transitioning from the last term of $v^{(k)}$ to the first  term of $v^{(k+1)}$ (for $k =1,2, \ldots,  m-1$) introduces no sign change. This equality extends to the maximum number of sign changes. Let $d$ the common mode of each component of $v^\circ$ and set $\alpha =  (d  + v_{(2)})/2$, where $v_{(2)}$ is the second largest value in $v^\circ$. This value  exists because $u$ is not a constant sequence. Therefore, $S(v^\circ-\alpha) = S^+(v^\circ)$ and  $S(v^{(k)}-\alpha) = S^+(v^{(k)})$ for all $k \in [m]$. Consequently, the bounds $2m$ and $2m-2$ are clear, as all the inner sequences of $v^\circ-\alpha$ must have exactly two sign changes each, while the outer sequences $v^{(1)}$  and $v^{(m)}$ may have one or two sign changes. This completes the proof of the first assertion. 
		
		Part \rm(ii) follows from$S(v-\lambda) = S(u-\lambda)$ for all $\lambda \in \mr$, since no sign change occurs between adjacent subsequences in an  $m$-decomposition. Because the modes of each $v^{(k)}$ may  differ,  $S^+(v) \le  \sum_{k=1}^m S^+(v^{(k)})$. Therefore, $S^+(u) = S^+(v) \le  \sum_{k=1}^m S^+(v^{(k)})$, as required.
	\end{proof}
	The intuition behind the previous proposition is that the number of intersection points created by a line crossing the graph of a multimodal function is maximised when the line lies below every mode but above all other values. This configuration is only guaranteed for an arbitrary multimodal sequence if all mode values coincide.
	\begin{rem}
		A more intuitive proof is as follows: Imagine representing the $m$ modes  as a string of $m$ plus sign. Notice that there are $m-1$ spaces  separating these signs and a total of $m+1$ potential locations, including the ends, where  minus signs could be placed.  An $m$-modal sequence's pattern can thus contain anywhere from $m-1$ to $m+1$ minus signs.  Crucially, at least $m-1$ minus signs are required to ensure every pair of plus signs is separated by at least one minus sign. This guarantees at least $2m-2$ sign changes and at most $2m$ sign changes.
	\end{rem}
	Now, we are in a position to state the main results of this section.
	\begin{thm}\label{thm.multimodal}
		Fix $m \ge 1$. A  regular sequence $u$ is $m$-modal iff it has a max-adjusted $m$-decomposition $u^\circ$ such that the set
		$A_l = \{\alpha \in \mr: S(v^\circ-\alpha) =2m - l\}$ is not empty for some 
		$l \in  \{0, 1, 2\}$. When $A_l$ is not empty (for $l \in  \{0, 1, 2\}$),  the sign arrangement of the sequence $(v^\circ-\lambda_l)$, where $\lambda_l \in A_l$, occurs in the same order. Specifically, the pattern consists of $m$ plus signs and $m -l$ minus signs arranged in an alternating fashion.
	\end{thm}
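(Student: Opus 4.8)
The plan is to read the equivalence off the sign-change accounting already prepared in Proposition~\ref{prop.sufficient.m-modal} and Assertion~\ref{lem.numsign.decomposition}, using the mode-aligned decomposition $v^\circ$ as the bridge between the modality of $u$ and a purely combinatorial sign pattern. Throughout I would work with the canonical unimodal $m$-decomposition furnished by the Decomposition Lemma (Lemma~\ref{lem.chained}), so that every junction between consecutive components sits at one of the valley indices $b^-_k$. This is the feature that makes the per-component additivity usable: the junction-facing endpoints then lie strictly below every mode, so no sign change is created where two components meet.

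For the necessity I would first invoke Lemma~\ref{lem.chained} to obtain a unimodal $m$-decomposition $v=(v^{(1)},\dots,v^{(m)})$ of the regular (hence finitely modal) sequence $u$, and then apply Proposition~\ref{prop.sufficient.m-modal}. Part~(i) of that proposition yields $S^+(v^\circ)=\sum_{k=1}^m S^+(v^{(k)})$ together with the bracketing $2m-2\le S^+(v^\circ)\le 2m$. Hence $S^+(v^\circ)\in\{2m-2,\,2m-1,\,2m\}$, i.e.\ $S^+(v^\circ)=2m-l$ for some $l\in\{0,1,2\}$, which is exactly the statement that $A_l\neq\emptyset$ for that value of $l$.

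For the sufficiency and for the description of the pattern I would use the distinguished shift from the proof of Proposition~\ref{prop.sufficient.m-modal}, namely $\alpha=(d_{*}+v_{(2)})/2$, where $d_{*}$ is the common aligned mode value and $v_{(2)}$ the second-largest entry of $v^\circ$; for this $\alpha$ one has $S(v^\circ-\alpha)=S^+(v^\circ)$ and simultaneously $S(v^{(k)}-\lambda_k-\alpha)=S^+(v^{(k)})$ for every $k$. Since each non-mode entry is $\le v_{(2)}<\alpha<d_{*}$, every aligned component contributes a single maximal block of plus signs (its mode plateau) flanked by minus signs, and by Assertion~\ref{lem.numsign.decomposition} the inner components realise the full pattern $-+-$. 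Because all junction values are valley values, hence below $\alpha$, consecutive components abut through equal signs; concatenating the component patterns therefore produces one alternating string with exactly $m$ plus-blocks separated by minus-blocks. There are $m-1$ internal minus-blocks, together with a leading and/or trailing block, and the value of $l$ records precisely which of the two extreme blocks are absent, giving the stated alternating arrangement. Reading this backward, the $m$ separated plus-blocks exhibit $m$ distinct modes of $u$ at valley-separated peaks, so $u$ is $m$-modal; the Concatenation Lemma (Lemma~\ref{lem.chained2}) confirms that each valley junction genuinely separates two adjacent humps rather than merging them.

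The step I expect to be the main obstacle is the sufficiency, specifically the passage from $S^+(v^\circ)=2m-l$ to ``$u$ has exactly $m$ modes''. The subtlety is that mode-alignment can manufacture spurious humps if a component carries its peak at a junction-facing endpoint: in that case a sign change is created at the junction and the additivity $S^+(v^\circ)=\sum_k S^+(v^{(k)})$ breaks down, so the raw count $2m-l$ no longer certifies the true number of modes. Restricting to the canonical decomposition of Lemma~\ref{lem.chained}, in which every junction lies at a valley index $b^-_k$, is precisely what excludes this pathology and keeps both the bound $S^+(v^\circ)\le 2m$ and the additivity valid at the same time. I would also bring in regularity here to pin down the admissible $l$: since the tail of $u$ is eventually nonincreasing, the last component ends in a decrease, forcing a trailing minus-block and thereby constraining when $l$ can take its extreme values.
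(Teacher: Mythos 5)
Your necessity argument is essentially the paper's: Lemma \ref{lem.chained} to produce a unimodal $m$-decomposition, Assertion \ref{lem.numsign.decomposition} for the per-component counts and $-+-$ patterns, and Proposition \ref{prop.sufficient.m-modal} for the additivity and the bracketing $2m-2 \le S^+(v^\circ) \le 2m$, giving $A_l \neq \emptyset$ for some $l \in \{0,1,2\}$. That direction is sound.

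The sufficiency direction, however, contains a genuine circularity. In that direction the hypothesis hands you only a regular sequence together with a \emph{max-adjusted} $m$-decomposition whose shifted versions have $2m-l$ sign changes in the stated alternating arrangement; you do not yet know that the components are unimodal, that the junctions sit at valley indices $b^-_k$, or how many modes $u$ has. Yet you declare that you will ``work throughout with the canonical unimodal $m$-decomposition furnished by Lemma \ref{lem.chained}'' and you invoke Assertion \ref{lem.numsign.decomposition} --- both of which presuppose that $u$ is $m$-modal, which is exactly the conclusion to be proved. The same circularity reappears when you address what you call the main obstacle: your proposed fix for spurious humps is to ``restrict to the canonical decomposition of Lemma \ref{lem.chained},'' but that decomposition is only available once $m$-modality is known. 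Relatedly, the claim that each aligned component contributes a single plus-block flanked by minus-blocks is justified only for unimodal components: a non-unimodal component can attain the aligned maximum in two separated blocks, or oscillate strictly below the distinguished shift $\alpha$, and nothing in your argument excludes this from the sign hypothesis alone --- ``reading this backward'' is precisely the step that is missing. The paper closes this gap with the auxiliary Theorem \ref{thm.sign}, a free-standing combinatorial statement about sign strings distributed over $m$ cells with matching boundary signs; this matching property holds for \emph{any} max-adjusted $m$-decomposition automatically (the junction terms are duplicated), so it can legitimately be applied before $m$-modality is known. It forces each inner cell to have exactly two sign changes in the pattern (\ref{def.sign.unimodal}), hence each component to be unimodal, after which Lemma \ref{lem.chained2} reassembles the $m$-modality of $u$. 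Your proposal neither proves nor replaces this step, so the backward implication is not established.
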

	To prove this theorem, we require the following auxiliary result.
	\begin{thm}\label{thm.sign}
		Consider a string of plus and minus signs distributed across $m \ge 1$ cells such that the signs at the boundaries of adjacent cells match. The total number of sign changes is:
		\begin{enumerate}[label=\rm(\roman*)]
			\item $2m$  iff each  cell has 2 sign changes.
			\item $2m-1$  iff  every cell except one of the outer cells has two sign changes.
			\item $2m-2$  iff each inner cell has two sign changes, and both outer cells have one sign change each.
		\end{enumerate}
		Furthermore, there are $m$ plus signs and the remaining signs  are minus signs, or $m$ minus signs and the remaining signs are plus signs.
	\end{thm}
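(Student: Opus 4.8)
The plan is to turn the statement into a short accounting argument driven by two inputs already in hand: the additivity of the sign-change count across cells with matching boundaries, and the rigid shape of each cell coming from the decomposition. First I would set up the per-cell dictionary. Since the relevant offset is taken between the common mode value and the next largest value, every cell carries exactly one block of plus signs (its mode) and minus signs everywhere else. Assertion~\ref{lem.numsign.decomposition} then fixes each inner cell ($1<k<m$) to the pattern $-+-$ with $s_k=2$; moreover, because the first cell terminates at, and the last cell originates from, a valley of the sequence, the outer cells can only read $+-$ or $-+-$ (resp.\ $-+-$ or $-+$), so $s_1,s_m\in\{1,2\}$ and, crucially, every shared boundary between adjacent cells is a minus sign.

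With this dictionary, the matching-boundary hypothesis means no sign change is created at a junction, so Lemma~\ref{lem.sign.chained.sequence}(i) gives $S=\sum_{k=1}^m s_k=2(m-2)+s_1+s_m$. As $s_1,s_m$ range over $\{1,2\}$ the total takes the values $2m$, $2m-1$, $2m-2$ according to whether $(s_1,s_m)$ equals $(2,2)$, lies in $\{(1,2),(2,1)\}$, or equals $(1,1)$. Reading this in both directions proves (i)--(iii): because the inner cells are pinned to $s_k=2$, any shortfall from the maximum $2m$ can only be debited to an outer cell, which is exactly why the exceptional cell in (ii) must be outer and why (iii) forces both outer cells down to one change.

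For the final count I would tally maximal runs. Each cell contributes exactly one plus-run, and since every junction carries a minus sign these plus-runs cannot coalesce across cells; the concatenated string therefore has precisely $m$ plus-runs, which alternate strictly with the minus-runs. As the number of runs equals $S+1$, the number of minus-runs is $S+1-m$, i.e.\ $m+1$, $m$, or $m-1$ in cases (i), (ii), (iii). This is the first alternative of the ``furthermore''; the second, with $m$ minus-runs, is its mirror image obtained by flipping every sign (equivalently, by offsetting above rather than below the common mode) and follows from the identical argument applied to $-v^\circ$.

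I expect the genuine work to sit in the run count rather than in the arithmetic. The subtle point is justifying that the $m$ humps remain separated after concatenation --- that is, that every junction is a minus sign --- which is precisely where the $-+-$ shape of the inner cells and the valley-endpoint property of the outer cells are needed; without it one could, for example, abut $-+$ against $+-$ and merge two modes into a single plus-run. A minor loose end is the degenerate case $m=1$, where the single cell plays the role of both outer cells: only (i) and (ii) are meaningful (a nonconstant unimodal cell has $s_1\in\{1,2\}$, so $S\in\{1,2\}$ and the bound $2m-2=0$ is not attained), and I would dispose of it separately so that the ``outer cell'' language in (ii)--(iii) stays unambiguous.
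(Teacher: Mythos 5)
Your proposal has a genuine gap: it imports hypotheses that Theorem \ref{thm.sign} does not have, and in doing so it assumes the very thing the theorem must deliver. As stated, the theorem is a self-contained combinatorial fact whose only hypotheses are that the signs sit in $m$ cells and that boundary signs of adjacent cells match. Your ``per-cell dictionary'' --- inner cells pinned to $-+-$ with $s_k=2$, outer cells restricted to $+-$, $-+$ or $-+-$, every junction a minus sign --- is obtained by invoking Assertion \ref{lem.numsign.decomposition}, i.e.\ by assuming the cells are the shifted components of a mode-aligned \emph{unimodal} $m$-decomposition. That assumption is unavailable here, and it is exactly what makes the argument circular at the point where the theorem is deployed: in the sufficiency half of Theorem \ref{thm.multimodal}, one knows only the total number of sign changes of $v^\circ-\lambda$ and must \emph{deduce} that each inner cell has precisely two sign changes (hence that each component is unimodal). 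If the counts $s_k=2$ for $1<k<m$ and $s_1,s_m\in\{1,2\}$ were known in advance, statements (i)--(iii) would reduce to the additivity bookkeeping you carry out via Lemma \ref{lem.sign.chained.sequence}; the substantive content of the theorem is the converse implication --- from the total $2m-l$, recover how the sign changes are distributed over the cells --- for an \emph{arbitrary} admissible sign string. The paper proves that converse by working directly with abstract strings: it exhibits the canonical arrangements $-+-|-+-|\cdots|-+-$ and $+-+|+-+|\cdots|+-+$ and then shows, by a separator-displacement argument, that no other configuration with matching boundaries attains the stated totals. That uniqueness step is the heart of the proof, and your proposal never performs it.

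A secondary symptom of the same defect: because your dictionary forces every junction to be a minus sign, the family $+-+|+-+|\cdots$ --- which has $m$ minus signs and is the source of the second alternative in the ``furthermore'' --- never arises in your analysis. You recover it by ``flipping every sign,'' which is a legitimate symmetry of abstract sign strings, but your argument is not about abstract strings; applied to your setup it would require $-v^\circ$ to again be a mode-aligned unimodal decomposition of an $m$-modal sequence, which is false in general (cf.\ Theorem \ref{prop.reversal.multimodal}, where negation shifts the modality order). Your run-counting arithmetic and your remark about the degenerate case $m=1$ are both sound, but they do not repair the main problem: to prove Theorem \ref{thm.sign} you must reason about all sign strings with matching cell boundaries, not only those already known to come from unimodal cells.
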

	\begin{proof}
		Suppose the observed sequence has $2m$ sign changes. By hypothesis, each cell contains at least one  plus and one minus sign. Repeat the pattern $+-$ or $-+$, $m$ times, placing a vertical bar $|$ after each occurrence to form $m$ cells: $+-|+-| \cdots|+-|+-$ or $-+|-|-+| \cdots|-+|-+$. Since adjacent cells share a sign in common, scan the string of signs from  left to right and insert the last sign of each cell at the beginning of the next. This gives the  arrangement
		\begin{equation*}
			-+-|-+-|-+-| \cdots|-+-|-+-
		\end{equation*}
		or
		\begin{equation*}
			+-+|+-+|+-+| \cdots|+-+|+-+
		\end{equation*}
		Each inner cell has exactly two sign changes, totalling 2m sign changes. Now, we show that no other configuration of signs satisfies the theorem's hypothesis. 
		
		Modifying this arrangement involves rearranging the vertical bars. Each separator can only move left or right by one or two positions. Moving a separator by one position is impossible, as it creates a sign inconsistency on either side of the separator. For example  $|-+-|-+-|$ to  $|-+|--+-|$. Moving a separator by two positions is also impossible, as it creates a singleton cell with only one type of sign. Therefore, no other configuration is possible, and each cell must have two sign changes. Assertions \rm(ii) and \rm(iii) are proved similarly, considering combinations where the first sign in the first cell  and the last sign in the last cell are removed.

		For sufficiency, suppose  each inner cell has $2m-l$ sign changes. We may arrange these as
		\begin{equation}\label{def.sign.bimodal}
			\bullet-+|+-+|+-+| \cdots|+-+|+-\,\circ,
		\end{equation}
		or
		\begin{equation}\label{def.sign.unimodal}
			\blacklozenge +-|-+-|-+-| \cdots|-+-|-+\,\lozenge,
		\end{equation}
		where $\circ, \, \bullet$ are each either a plus sign or a blank space  and $\lozenge, \, \blacklozenge$  are each either a minus sign or a blank space. Both configurations (\ref{def.sign.bimodal}) and (\ref{def.sign.unimodal}) are $2m-2$ excluding the end symbols. Varying these symbols between blank and sign states yields $2m-1$ or $2m$ total sign changes.  For the last assertion, it is clear that (\ref{def.sign.bimodal}) has $m$ minus signs and (\ref{def.sign.unimodal}) has $m$ plus signs.
	\end{proof}
	\begin{proof}[Proof of Theorem \ref{thm.multimodal}]
		The requirement that each subsequence $(u_k)_{k \ge }$ has a finite maximum for every $n \in I$, stems from the existence condition imposed by Theorem \ref{thm.multimodal.maximum2}. If $u$ is constant,  it is  trivially unimodal. Now assume $u$ is nonconstant. For the necessary part, suppose $u$ is $m$-modal. By Lemma \ref{lem.chained}, $u$ has a unimodal $m$-decomposition $(v^{(1)}, \ldots, v^{(m)})$, which satisfies  by Lemma \ref{lem.numsign.decomposition}, the property $S^+(v^{(k)}) =2$ when $k=2,\ldots, m-1$ and  $1 \le S^+(v^{(k)}) \le 2$ when $k=1$ or $m$. Let $v^\circ \coloneqq (v^{(1)}-\lambda_1, \ldots, v^{(m)}-\lambda_1)$ be the mode-adjusted  transformation of $v$, where $\lambda_1, \ldots, \lambda_{m}$ are selected as prescribed in  Definition \ref{def.mode-alignment}. By the same lemma, each inner sequence $v^{(k)}-\lambda_k$, where $k=2, \ldots, m-1$,  have  the sign pattern  $-+-$. Consequently, the sign pattern of $v^{(1)}-\lambda_1$ must end with a minus sign,  and  $v^{(m)}-\lambda_m$ must begin with one to match  the signs of their adjacent sequences $v^{(2)}$ and $v^{(m-1)}$ respectively. Because $v$ is not constant,  $v^{(1)}-\lambda_1$ and $v^{(m)}-\lambda_m$ must change sign at least once. Therefore, they each have a plus sign and possibly another minus sign, as $2m-2 \le S^+(v^\circ) \le 2m$ in view of Proposition \ref{prop.sufficient.m-modal}. Consequently, for $l \in \{ 0, 1, 2\}$  and  $\lambda \in \{\alpha \in \mr: S(v^\circ-\alpha) =2m - l\}$,  $v^\circ-\lambda$ has $m$ plus signs interlaced with  $m-l$  minus signs,  counting duplicated signs only once.
		
		For sufficiency, suppose that for a given $m$, there exists an  $l \in  \{0, 1, 2\}$ such that $A_l = \{\alpha \in \mr: S(v^\circ-\alpha) =2m - l\}$, and  the sign pattern of $(v^\circ-\lambda_l)$, where $\lambda_l \in A_l$, remains the same, consisting of  exactly $m$ plus signs and $m-l$ minus signs. Applying Theorem \ref{thm.sign}, we deduce that each inner sequence has precisely two sign changes, with the remaining sign changes distributed between the outer sequence $v^{(1)}$ and $v^{(m)}$. Only the pattern (\ref{def.sign.unimodal}) satisfies the condition of $m$ plus signs interlaced with $m-l$ minus signs (excluding duplicated adjacent signs). Therefore, each $v^{(k)}$ is unimodal and by application of Lemma \ref{lem.chained2}, $v$ and thus $u$ is unimodal.
	\end{proof}
	
	A special version of this theorem appeared  recently in \cite{Karp}  (see Lemma 2.4), where a  necessary and sufficient  condition for unimodality was proposed. However, the  condition suggested is insufficient, as it overlooked two scenarios: sequences lacking a maximum and bimodal sequences with the sign pattern $+-+$. The following corollary, valid for sequence, provides a correction. A similar result for functions can be derived.
	\begin{cor}\label{lem.sign.chg}
		A regular sequence $u$  is
		unimodal iff $S^+(u) \le 2$, and for all $\lambda \in \mr$ such that $S(u-\lambda)=2$, the sign pattern of $(u-\lambda)$ is necessarily $-, +, -$.
	\end{cor}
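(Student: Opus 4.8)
Setting $m=1$, a $1$-modal sequence is precisely a unimodal sequence, and its unimodal $1$-decomposition is trivially the sequence $u$ itself (there are no inner components to align, so $v^\circ = u$ and no mode-alignment shift is required). Under this reduction, the set $A_l = \{\alpha \in \mr : S(u-\alpha) = 2-l\}$ for $l \in \{0,1,2\}$, and Theorem \ref{thm.multimodal} asserts that $u$ is unimodal iff some $A_l$ is nonempty with the prescribed alternating sign pattern of $m=1$ plus signs and $m-l = 1-l$ minus signs. The task is then to translate this into the stated single-line condition ``$S^+(u) \le 2$ together with the $-+-$ pattern whenever $S(u-\lambda)=2$.''

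First I would handle the constant case separately, since a constant sequence is unimodal by the paper's convention and trivially satisfies $S^+(u) = 0 \le 2$ with no $\lambda$ achieving two sign changes; the stated condition holds vacuously. For the nonconstant case, I would argue both directions. For necessity, assuming $u$ unimodal, Proposition \ref{prop.sufficient.m-modal} (with $m=1$) gives $S^+(u) = S^+(v^\circ) \le 2m = 2$, establishing the bound; and Assertion \ref{lem.numsign.decomposition} (again with the single outer component $k=1=m$) guarantees that whenever $S(u-\lambda)=2$ the sign pattern is exactly $-+-$. For sufficiency, I would invoke the sufficiency direction of Theorem \ref{thm.multimodal}: given $S^+(u)\le 2$, at least one of $A_0, A_1, A_2$ is nonempty (choosing $\lambda$ appropriately relative to the maximum and the second-largest value), and the hypothesis that $S(u-\lambda)=2$ forces the $-+-$ pattern rules out the competing bimodal configuration $+-+$, so by Theorem \ref{thm.multimodal} the sequence is $1$-modal, i.e.\ unimodal.

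The main obstacle I anticipate is the logical bookkeeping around the $S^+(u) = 2$ case versus the degenerate cases $S^+(u) \in \{0,1\}$, and in particular showing that the $-+-$ constraint genuinely excludes the spurious bimodal sequence with pattern $+-+$ — which is exactly the scenario the surrounding text flags as having been overlooked in \cite{Karp}. A sequence such as $(5,3,4)$ has $S^+ = 2$ but realises the pattern $+-+$ (taking $\lambda$ between $3$ and $4$), and is bimodal rather than unimodal; the corollary must rule this out precisely through the sign-pattern clause, not merely the bound $S^+(u)\le 2$. I would therefore emphasise that the two conditions are jointly necessary, making explicit that $S^+(u)\le 2$ alone is insufficient. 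The remainder is routine: unwinding the $m=1$ instance of the alternating-pattern description ($m=1$ plus sign, $1-l$ minus signs) yields the patterns $-+-$ (for $l=0$), $+-$ or $-+$ (for $l=1$), and the single $+$ or monotone pattern (for $l=2$), all of which are consistent with unimodality and with the stated $-+-$ requirement at the $S=2$ level.
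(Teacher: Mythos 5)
Your proposal is correct and follows exactly the paper's own route: the paper proves this corollary in one line as the direct $m=1$ specialisation of Theorem \ref{thm.multimodal}, which is precisely your plan, only with the bookkeeping (constant case, the role of Proposition \ref{prop.sufficient.m-modal} and Assertion \ref{lem.numsign.decomposition}, and the exclusion of the $+-+$ pattern) spelled out explicitly. Your added detail, in particular the example $(5,3,4)$ showing that $S^+(u)\le 2$ alone is insufficient, is a faithful elaboration of what the paper leaves implicit.
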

	\begin{proof}
		This assertion follows directly from Theorem \ref{thm.multimodal} when $m=1$. 
	\end{proof}

	The definition of an $m$-modal sequence includes $m-1$ intervals of local minima. Considering the additive inverse of an $m$-modal sequence, then by symmetry, these intervals of local minima, $[b^-_1, b^+_1], \ldots, [b^-_{m-1}, b^+_{m-1}]$, turn into  intervals of modes. The following lemma illustrates this symmetrical relationship.
	\begin{lemma}\label{lem.bimodal}
		A real sequence $u$ with exactly two sign changes is either nonmodal, unimodal or bimodal. 
		Specifically, $u$ is unimodal or bimodal if  $u$ is regular. If, in addition, $-u$ is regular, then the unimodality of $u$ is equivalent to the bimodality of  $-u$.
	\end{lemma}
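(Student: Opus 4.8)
The plan is to read ``exactly two sign changes'' as $S^+(u)=2$, the quantity that governs modality in Corollary~\ref{lem.sign.chg}, and to handle the three assertions in turn, leaning on the $m$-modality criteria of this section together with the symmetry between $u$ and $-u$. The unifying observation is that negating a sequence reverses every sign, so a shift $\lambda$ for which $u-\lambda$ realises the pattern $-,+,-$ is exactly a shift for which $-u+\lambda$ realises $+,-,+$, and conversely; since $S(-u-\lambda)=S(u+\lambda)$, this also records the fact $S^+(-u)=S^+(u)=2$ that I will use repeatedly.

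For the first two assertions I would first rule out modality of order $m\ge 3$. If $u$ were $m$-modal with $m\ge 3$, then by Definition~\ref{def.multimodel} it has three consecutive modes with values $M_1,M_2,M_3$ separated by two valleys with values $V_1,V_2$, and (\ref{def.condition.multimodel.sequence2}) forces $V_1<\min(M_1,M_2)$ and $V_2<\min(M_2,M_3)$. Choosing $\lambda=\max(V_1,V_2)+\varepsilon$ with $\varepsilon>0$ small, the restriction of $u-\lambda$ to the window spanning these five extrema exhibits three genuine sign changes: when $V_2\ge V_1$ the values at (valley$_1$, mode$_2$, valley$_2$, mode$_3$) sit below, above, below, above $\lambda$, producing the sub-pattern $-,+,-,+$; the case $V_1>V_2$ is symmetric, using (mode$_1$, valley$_1$, mode$_2$, valley$_2$). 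Since the number of sign changes of a contiguous block is a lower bound for $S(u-\lambda)$, this yields $S^+(u)\ge 3$, contradicting $S^+(u)=2$. Hence $m\le 2$, so $u$ is nonmodal, unimodal, or bimodal. If moreover $u$ is regular, its tail is eventually nonincreasing, so every tail $(u_k)_{k\ge n}$ has a finite maximum; by Theorem~\ref{thm.multimodal.maximum2} and the ensuing corollaries $u$ is then modal of some finite order $m\ge 1$, which combined with $m\le 2$ leaves exactly the unimodal and bimodal cases.

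For the third assertion I would exploit the sign-pattern duality together with Corollary~\ref{lem.sign.chg}. Since $-u$ is also regular and $S^+(-u)=2$, the second assertion applies to $-u$ as well, so each of $u,-u$ is unimodal or bimodal. By Corollary~\ref{lem.sign.chg}, $u$ is unimodal precisely when every shift with $S(u-\lambda)=2$ produces the pattern $-,+,-$, equivalently when $u$ admits no shift producing $+,-,+$. Under negation the two admissible two-change patterns interchange, so ``$u$ admits no $+,-,+$ shift'' translates into ``$-u$ admits no $-,+,-$ shift,'' i.e.\ $-u$ is not unimodal, whence $-u$ is bimodal. Reading the same chain of equivalences with the roles of $u$ and $-u$ exchanged then closes the loop and yields the stated equivalence.

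The step I expect to be the crux is this last one, specifically the passage from the two sign patterns to the exclusive alternative \emph{unimodal or bimodal}. The forward implication (unimodal $u\Rightarrow$ bimodal $-u$) is clean: since $S^+(u)=2$ there is at least one two-change shift, which for unimodal $u$ must be $-,+,-$, and its negation exhibits a $+,-,+$ pattern for $-u$, precluding unimodality of $-u$. The reverse direction is more delicate, because \emph{a priori} one sequence could admit a $-,+,-$ shift at one level and a $+,-,+$ shift at another; the argument must therefore show that for a regular sequence with $S^+(u)=2$ the realised two-change pattern is unambiguous, so that exactly one of $u,-u$ is unimodal. This is where I would spend the most care, tracing the pattern back to the position of the single interior valley relative to $\inf u$ through the monotonicity and strict-inequality requirements (\ref{def.condition.multimodel.sequence1})--(\ref{def.condition.multimodel.sequence2}).
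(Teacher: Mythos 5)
Your treatment of the first two assertions and of the forward half of the third is sound, and in places more careful than the paper's own argument: the paper reads the number of modes off the plus signs of a \emph{single} two-change shift, whereas you rule out modality of order $m\ge 3$ by explicitly constructing, via (\ref{def.condition.multimodel.sequence2}), a level at which $u-\lambda$ has at least three sign changes, and you derive ``$u$ unimodal $\Rightarrow$ $-u$ bimodal'' cleanly from Corollary~\ref{lem.sign.chg}. The genuine gap is exactly the step you flag and postpone: the reverse implication. You hope to show that for a regular sequence with $S^+(u)=2$ the realised two-change pattern is unambiguous, but that claim is false, so no amount of care with (\ref{def.condition.multimodel.sequence1})--(\ref{def.condition.multimodel.sequence2}) will close the argument. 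Take $u=(1,3,-1,1)$: then $u-2=(-1,1,-3,-1)$ has sign pattern $-,+,-$ (two changes) while $u-0$ has sign pattern $+,-,+$ (two changes), and no shift produces three changes, so $S^+(u)=2$ with \emph{both} patterns attained at different levels. Under Definition~\ref{def.multimodel}, both $u$ (modes at indices $2$ and $4$) and $-u=(-1,-3,1,-1)$ (modes at indices $1$ and $3$) are bimodal, and both are regular since finite sequences have trivially nonincreasing tails. Hence ``$-u$ bimodal'' does not imply ``$u$ unimodal,'' and the stated equivalence fails in this direction.

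It is worth noting that the paper's own proof founders at the same point: it tacitly assumes the very level-independence of the sign pattern that you doubted, asserting that a bimodal sequence's two-change pattern ``must be'' $+,-,+$ and concluding that $-u$ ``can only be'' unimodal, a conclusion the example above refutes. So your proposal establishes the first two assertions and one half of the third; the other half is not provable as stated, and your instinct that this step was the crux, rather than a routine symmetry argument, was correct --- but a proof attempt that defers its crux is incomplete, and here the deferred step is irreparable rather than merely delicate.
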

	\begin{proof} 
		Assume  $u$ and $-u$ satisfy the existence condition for $m$-modaltiy as described in the lemma's hypothesis. If the two tail conditions are not satisfied, then, by Theorem \ref{thm.multimodal.maximum2},  $u$ and $-u$ are nonmodal.  By hypothesis, there exists a $\lambda \in \mr$ such that $S(u-\lambda)=2$. Therefore,  the sign pattern of  $u-\lambda$ is either $+-+$ or $-+-$.  Since each $+$ represents a distinct mode, $u$ is unimodal in the first case and bimodal in the second. If $u$ is unimodal, then  $-u$ is bimodal, as its sign pattern must be $+-+$. Conversely, if $u$ is bimodal with sign pattern $+-+$, then $-u$ can only be  unimodal, which is  the desired result.
	\end{proof}

	\begin{figure}[h]
		\centering
		\begin{minipage}{0.45\textwidth}
			\begin{tikzpicture}
				\begin{axis}[
					width=8cm,
					height=5.5cm,
					xlabel={A bimodal sequence $u$ with  two sign changes},
					ylabel={},
					title={},
					legend pos=south west,
					legend style={font=\small},
					tick label style={font=\small},
					label style={font=\small},
					title style={font=\small},
					grid=none
					]
					\addplot table [x=index, y=value, col sep=comma] {bimodal.csv};
					\addplot table [x=index, y=lambda, col sep=comma, mark={}] {bimodal.csv};				
					
				\end{axis}
			\end{tikzpicture}
			
		\end{minipage}
		\hfill
		\begin{minipage}{0.45\textwidth}
			\begin{tikzpicture}
				\begin{axis}[
					width=8cm,
					height=5.5cm,
					xlabel={The additive inverse of $u$ is a unimodal sequence},
					ylabel={},
					title={},
					legend pos=south west,
					legend style={font=\small},
					tick label style={font=\small},
					label style={font=\small},
					title style={font=\small},
					grid=none
					]
					\addplot table [x=index, y=value2, col sep=comma] {bimodal.csv};
					\addplot table [x=index, y=lambda2, col sep=comma, mark={}] {bimodal.csv};				
				\end{axis}
			\end{tikzpicture}
		\end{minipage}			
		\vskip\baselineskip 
		
		\begin{minipage}{0.45\textwidth}
			\begin{tikzpicture}
				\begin{axis}[
					width=8cm,
					height=5.5cm,
					xlabel={A trimodal sequence $v$ with  four sign changes},
					ylabel={},
					title={},
					legend pos=south west,
					legend style={font=\small},
					tick label style={font=\small},
					label style={font=\small},
					title style={font=\small},
					grid=none
					]
					\addplot table [x=index, y=value, col sep=comma] {trimodal.csv};
					\addplot table [x=index, y=lambda, col sep=comma, mark={}] {trimodal.csv};				
					
				\end{axis}
			\end{tikzpicture}
		\end{minipage}
		\hfill
		\begin{minipage}{0.45\textwidth}
			\begin{tikzpicture}
				\begin{axis}[
					width=8cm,
					height=5.5cm,
					xlabel={The additive inverse of $v$ is a bimodal sequence},
					ylabel={},
					title={},
					legend pos=south west,
					legend style={font=\small},
					tick label style={font=\small},
					label style={font=\small},
					title style={font=\small},
					grid=none
					]
					\addplot table [x=index, y=value2, col sep=comma] {trimodal.csv};
					\addplot table [x=index, y=lambda2, col sep=comma, mark={}] {trimodal.csv};				
					
				\end{axis}
			\end{tikzpicture}
		\end{minipage}			
		\caption{$m$-modal sequences and their additive inverses.}		
		\label{fig.multimodal}
	\end{figure}
	A generalisation of this lemma follows
	\begin{thm}\label{prop.reversal.multimodal}
		Let  $u$ be a nonconstant $m$-modal sequence, and  $v \coloneqq (v^{(1)}, v^{(2)}, \ldots, v^{(m)})$ its  unimodal $m$-decomposition. Denote by $v^\circ$  the mode-aligned transformation of $v$. If $-u$ is regular, then $-u$ is $m$-modal, $(m+1)$-modal or $(m-1)$-modal. Specifically,
		\begin{enumerate}[label=\rm(\roman*)]
			\item If $S^+(v^\circ)=2m$, then $-u$ is $(m+1)$-modal.
			\item If $S^+(v^\circ) = 2m-2$, then $-u$ is $(m-1)$-modal.
			\item If $S^+(v^\circ) = 2m-1$, then $-u$ is $m$-modal.
		\end{enumerate}
		If $-u$ is nonregular, then $-u$ is nonmodal.
	\end{thm}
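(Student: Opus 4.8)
The plan is to build everything on two elementary facts: negating every term of a sequence preserves its number of sign changes, so that $S(-w-\mu)=S(w+\mu)$ and hence $S^+(-u)=S^+(u)$; and the modes of $-u$ are exactly the strict local minima of $u$. First I would dispose of the nonregular case. Since $u$ is $m$-modal with $m<\infty$, Theorem \ref{thm.multimodal.maximum} forces $u$ to be regular, so the tail of $u$ is nonincreasing and the tail of $-u$ is therefore nondecreasing. If $-u$ is nonregular this nondecreasing tail cannot be eventually constant, so for every $n$ the supremum of $(-u_k)_{k\ge n}$ is approached but never attained (or is infinite); thus $(-u_k)_{k\ge n}$ has no finite maximum, and Theorem \ref{thm.multimodal.maximum2} yields that $-u$ is nonmodal, as claimed.

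For the regular case the core is to count the modes of $-u$, i.e.\ the strict local minima of $u$. By Definition \ref{def.multimodel}, $u$ has exactly $m-1$ interior minima on $[b^-_k,b^+_k]$, $k\in[m-1]$, each strict by (D2); under negation these become $m-1$ genuine modes of $-u$, the two inequalities of (D2) for $-u$ at such a point being precisely the inequalities $u_{a^+_k}>u_{b^-_k}$ and $u_{a^-_{k+1}}>u_{b^-_k}$ supplied by (D2) for $u$. The only further modes $-u$ can acquire are boundary ones. A left boundary mode of $-u$ arises exactly when $u$ rises strictly from its first value to its first mode, i.e.\ $a^-_1>1$; and for $m\ge2$ this is precisely the condition $S^+(v^{(1)})=2$ rather than $1$ in Lemma \ref{lem.numsign.decomposition}, because the first piece $v^{(1)}$ always descends to the first valley on its right, so its second sign change is available iff it also starts below its mode. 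The symmetric statement relates a right boundary mode of $-u$ to $S^+(v^{(m)})=2$.

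I would then make the count quantitative. By Proposition \ref{prop.sufficient.m-modal}, for $m\ge2$ we have $S^+(v^\circ)=\sum_{k=1}^m S^+(v^{(k)})=2(m-2)+S^+(v^{(1)})+S^+(v^{(m)})$, so the number of boundary modes of $-u$ equals $(S^+(v^{(1)})-1)+(S^+(v^{(m)})-1)=S^+(v^\circ)-(2m-2)$. Hence $-u$ has exactly $(m-1)+\big(S^+(v^\circ)-(2m-2)\big)=S^+(v^\circ)-(m-1)$ modes, which gives $m+1$, $m$ and $m-1$ in cases (i), (iii) and (ii) respectively. To conclude I would verify that $-u$ actually satisfies Definition \ref{def.multimodel}: its monotonicity intervals are those of $u$ with directions reversed (so (D1), including the flat-interval equalities, transfers verbatim), and the strict (D2) inequalities for its new modes come from the strict valley inequalities of $u$ together with the strict boundary rise/fall encoded by $a^-_1>1$ and its right-end analogue. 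Equivalently, one may simply negate the canonical aligned pattern furnished by Theorem \ref{thm.multimodal}, observe that $-u$ then carries the flipped pattern with $S^+(v^\circ)-(m-1)$ plus-blocks arranged alternately, and invoke Theorem \ref{thm.multimodal} in the reverse direction.

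The main obstacle, I expect, is the bookkeeping at the two ends. One must carefully distinguish the boundary minima of $-u$, which are \emph{not} modes, from the interior ones, and check in each of the three cases that the number of interior minima of $-u$ is exactly one less than its mode count, so that $-u$ is modal of the stated order and not merely in possession of that many local maxima. The equivalence $S^+(v^{(1)})=2\Leftrightarrow a^-_1>1$ (and its right-end analogue), linking the sign-change budget of the outer pieces to the strict boundary behaviour of $u$, is the linchpin and must be argued carefully, together with the degenerate case $m=1$, where $v^{(1)}=v^{(m)}=v^\circ$, the additive splitting of $S^+(v^\circ)$ fails, case (ii) cannot occur since $S^+(v^\circ)\ge1$, and the conclusion reduces to the base symmetry of Lemma \ref{lem.bimodal}.
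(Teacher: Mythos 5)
Your proposal is correct, and it reaches the same three-way count through the same underlying quantities as the paper, but the execution is genuinely different. The paper's proof stays entirely at the level of sign strings: it takes a maximising shift $\lambda$ for the mode-aligned decomposition $v^\circ$, writes out the pattern of $v^\circ-\lambda$ (display (\ref{def.sign.pattern.rev})), negates it to obtain the pattern of $\lambda-v^\circ$ (display (\ref{def.sign.pattern})), and in each of the three scenarios counts how many plus signs the flipped end blocks contribute, reading off the modality of $-u$ from the number of plus blocks. You instead count the modes of $-u$ directly as strict local minima of $u$: the $m-1$ interior valleys (strict by (D2)) always survive negation, and the boundary modes are governed by the dichotomy $S^+(v^{(k)})\in\{1,2\}$ for $k\in\{1,m\}$, which is exactly the $a^-_1=1$ versus $a^-_1>1$ alternative already recorded in Assertion \ref{lem.numsign.decomposition}; your closed formula, number of modes of $-u$ equal to $S^+(v^\circ)-(m-1)$, packages all three cases at once, and you then verify (\ref{def.condition.multimodel.sequence1})--(\ref{def.condition.multimodel.sequence2}) for $-u$ explicitly, a step the paper leaves implicit in the pattern-to-modality correspondence of Theorem \ref{thm.multimodal}. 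What your route buys is a direct certificate that $-u$ satisfies Definition \ref{def.multimodel} (and an honest treatment of the degenerate case $m=1$, where the additive splitting of $S^+(v^\circ)$ fails); what the paper's route buys is brevity, since negating a string trivially converts minus blocks into plus blocks. One minor imprecision on your side: in the nonregular case you claim the maximum of $(-u_k)_{k\ge n}$ fails to exist \emph{for every} $n$, but an early term of $-u$ may exceed the supremum of its tail, so the maximum can exist for small $n$; this is harmless, because Theorem \ref{thm.multimodal.maximum2} requires existence for each $n$, and failure at all sufficiently large $n$ already forces $-u$ to be nonmodal.
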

	\begin{proof}
		By Theorem \ref{thm.multimodal}, $u$ possesses a mode-aligned $m$-decomposition $v^\circ \coloneqq (v^{(1)}-\lambda_1, \ldots, v^{(m)}-\lambda_m)$, where $(\lambda_1, \ldots, \lambda_m) \in \mr^m$ are derived in Definition \ref{def.mode-alignment}. In view of Proposition \ref{prop.sufficient.m-modal}, there exists a $\lambda \in \mr$, such that $2m-2 \le S(v^\circ-\lambda) \le 2m$. The sign pattern of   $v^\circ-\lambda$ is
		\begin{equation}\label{def.sign.pattern.rev}
			\circ\circ\circ|-+-|-+-| \cdots|-+-|\bullet\bullet\,\bullet,
		\end{equation}
		where $\circ\circ\circ$ is either $+-$ or $-+-$ and $\bullet\bullet\,\bullet$ is one of   $-+$ or $-+-$. Inverting the signs yields the sign pattern of $\lambda-v^\circ$,  given by
		\begin{equation}\label{def.sign.pattern}
			\lozenge\lozenge\,\lozenge|+-+|+-+| \cdots|+-+|\blacklozenge\blacklozenge\,\blacklozenge,
		\end{equation}
		where  $\lozenge\lozenge\lozenge$ is either $-+$ or $+-+$ and $\blacklozenge\blacklozenge\,\blacklozenge$ is one of   $+-$ or $+-+$.	By symmetry, $-v^\circ$ is a mode-aligned $m$-decomposition of $-u$ and $-\lambda$
		maximises the number of sign changes in $\lambda-v^\circ$. 	 The inner sequences in (\ref{def.sign.pattern.rev}) have $m-2$ plus signs with $2m-4$ sign changes. In contrast, the inner sequences in (\ref{def.sign.pattern}) have  $m-1$ plus signs and the same number of sign changes as (\ref{def.sign.pattern.rev}). Thus, three scenarios arise. First, if $S^+(v^\circ)=2m$, the arrangement of signs $\circ\circ\circ$ and $\bullet\bullet\bullet$ must be $-+-$ in order to add four sign changes to the total. Second, if $S^+(v^\circ)=2m-2$, $\circ\circ\circ$ and $\bullet\bullet\bullet$, are respectively $+-$ and $-+$ as we require two sign changes. Third, if $S^+(v^\circ)=2m-1$, $\circ\circ\circ$ and $\bullet\bullet\bullet$ correspond  respectively  to $+-$ and $-+-$ or to $-+-$ and $-+$ in order to add three sign changes to the total.
		
		In the first scenario, inverting $\circ\circ\circ$ and $\bullet\bullet\bullet$ add one plus sign each, giving  $m+1$ plus signs to  $\lambda-v^\circ$, so $-u$ is $(m+1)$-modal. By a similar argument, in the second scenario, $\lozenge\lozenge\,\lozenge$ and $\blacklozenge\blacklozenge\,\blacklozenge$  add no new plus signs. In this case, $\lambda-v^\circ$ has $m-1$ sign change, so $-u$ is $(m-1)$-modal. In the third scenario, $\lozenge\lozenge\,\lozenge$ and $\blacklozenge\blacklozenge\,\blacklozenge$  together add  one additional plus sign; hence, $-u$ is $m$-modal. Finally, if  $-u$ is nonregular, then $-u$ is nonmodal by application of Theorem \ref{thm.multimodal.maximum2}. 
	\end{proof}
	
	The intuition behind Proposition \ref{prop.reversal.multimodal} and its special case, Lemma \ref{lem.bimodal}, stems from the patterns $\wedge$ and $\vee$ in the unimodal/bimodal case and $M$/$W$ in the bimodal/trimodal setting. These configurations are depicted in Figure~\ref{fig.multimodal}, where the mode-adjusted $m$-decomposition has $2m-2$ sign changes (same as the original sequence), and the additive inverse corresponds to an $(m-1)$-modal sequence.
	We close this section with an easy lemma for determining the number of sign changes associated with a given $m$-modal function.

	In the following section, we turn our attention to the second topic of this study: transformations of $m$-modal sequences using sign-regular kernels.
	\section{Preservers and Reversers of Unimodality}\label{sec2}
	\subsection{The impact of sign-regularity on unimodal sequences}\label{sec2.1}
	Let $I, J$ be convex subsets of $\mr$,   which can be intervals of the real line or countable set of discrete values along the real line. Let $K(x,y)$ be a kernel, which is a bounded function, defined on $I \times J$ such that $\int_J K(x,y)\, \dx[y] < \infty$. Define the  transformation $\eL$ based on kernel $K$, where for a bounded function  $f:J\to \mr$, as follows:
	\begin{equation*}
		g(x) \coloneqq (\eL f)(x) = \int_{J} K(x,y)f(y)\,\dx[y],~~ x \in I.
	\end{equation*}
	As our focus is on  sequence-to-sequence transformations, we will assume that $I\subseteq \mn$ and $J \subseteq \mn$ and the integration is substituted by summation. We shall use the symbols $u, v$ for sequences instead of $f, g$. This yields the transformation 
	\begin{alignat}{2}
		v_n &\coloneqq  \sum_{j \in J} K(n,j)u_j,\qquad &n \in I \label{def.kernel.sequence-sequence}.
	\end{alignat}
	Unless otherwise stated, we  shall assume that $K(\cdot,j)$ attains all of its values for each $j \in J$ and that the sum (\ref{def.kernel.sequence-sequence}) is finite and exists ($v$ is assumed to converge uniformly on $J$). 
	
	We are interested in determining conditions on  a transformation $\eL$ that maps an $m$-modal sequence to either an $m$-modal sequence or to a sequence whose additive inverse is $m$-modal. We shall rely on the theory  of total positivity to provide simple, sufficient conditions ensuring that $\eL$ preserves or reverses $m$-modality.   We begin by recalling a  fundamental theorem of Karlin (see p.233 in \cite{KarlinTPBook}), which states that under certain conditions, transformations based on a $\SR{r}$ kernel preserve or reverse the sign pattern of functions with at most $r-1$ sign changes.
	\begin{thm}[Theorem 3.1 \& Proposition 3.1,  Chapter 3 in \cite{KarlinTPBook}]\label{def.vd}
		Fix $r \ge 1$ and let $I, J$  be two intervals in $\mn$ with at least $r$ points. Suppose  $\eL$  is a transformation  defined in (\ref{def.kernel.sequence-sequence}), using a kernel $K: I\times J \to \mr$. If $K$ is $\SR{r}$, then for a  real-valued sequence $u$ defined on $I$ and its transformation 	$v\coloneqq\eL{u}$,  the following hold:
		\begin{enumerate}[label=\rm(\roman*)]
			\item  $\eL$ is VD if $S({u}) \le r-1$.  Specifically, if $r=2$ and $u$ is monotone, then $v$ is monotone.
			\item If $S({v})=S({u}) = k \le r-1$, then ${u}$ and ${v}$ exhibit the same monotonicity pattern if  $K$ is $\SR{r}$, 
			and the signs of its minors of order $k$ and $k+1$ satisfy $\varepsilon_{k}\varepsilon_{k+1}=1$. However, if  these two signs oppose each other, then ${v}$ exhibits the reverse sign pattern to  $u$.  					
		\end{enumerate}
	\end{thm}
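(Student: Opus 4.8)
The plan is to treat this as the classical Schoenberg--Karlin variation-diminishing theorem and to reduce both parts to one finite-dimensional sign computation powered by the Cauchy--Binet (basic composition) formula; since the statement is attributed to \cite{KarlinTPBook}, I only sketch the engine. First I would \emph{linearise the sign structure of $u$}. Writing $k\coloneqq S(u)\le r-1$, partition the index set $J$ into $k+1$ consecutive blocks $B_1\prec\cdots\prec B_{k+1}$ on which $\sgn(u_j)$ is constant and equal to $\delta(-1)^s$ on $B_s$ (with $\delta=\pm1$ fixed and zeros absorbed into adjacent blocks). Setting $H_{js}\coloneqq \abs{u_j}\,\mathbf{1}[j\in B_s]\ge 0$, one has $u_j=\delta\sum_{s=1}^{k+1}(-1)^s H_{js}$, so that with $K'\coloneqq\bigl(K(i,j)\bigr)$ the transform factors as
\begin{equation*}
	v=\delta\,V a,\qquad V\coloneqq K'H,\qquad a\coloneqq\bigl((-1)^s\bigr)_{s=1}^{k+1}.
\end{equation*}
The gain is that $H$ is \emph{totally positive}: each of its rows has a single nonzero entry and the block pattern is a staircase, so any minor of $H$ is either $0$ or a product of the $\abs{u_j}$. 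By Cauchy--Binet every minor of $V$ of order $p\le k+1$ is a sum of products (minor of $K'$)$\times$(minor of $H$); since $K$ is $\SR{r}$ the first factor has sign $\varepsilon_p$ and the second is nonnegative. Hence $V$ is itself $\SR{k+1}$ with the \emph{same} minor signs $\varepsilon_1,\ldots,\varepsilon_{k+1}$ as $K$, and everything reduces to analysing $Va$ for the fixed alternating vector $a$.

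For part (i) I would argue by contradiction: if $S(v)\ge k+1$ there are rows $x_1<\cdots<x_{k+2}$ of $V$ on which $v$ strictly alternates, say $\sgn v(x_i)=\eta(-1)^i$. The corresponding $(k+2)\times(k+1)$ submatrix of $V$ has a nontrivial left null vector $c$, and the standard dependency formula gives $c_i=(-1)^i\,(\text{order-}(k+1)\text{ minor obtained by deleting row }i)$; all these minors share the sign $\varepsilon_{k+1}$, so $c$ \emph{strictly alternates}, $\sgn c_i=(-1)^i\varepsilon_{k+1}$. Then $0=\sum_i c_i v(x_i)$ is a sum of terms of the constant sign $\eta\varepsilon_{k+1}$, forcing each to vanish --- a contradiction. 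Thus $S(v)\le k=S(u)$, which is the VD property, and the monotone refinement for $r=2$ follows by a summation-by-parts reduction to the very same inequality.

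For part (ii), the equality $S(v)=S(u)=k$ makes the inequality rigid and lets me read off the actual arrangement. Restricting $V$ to $k+1$ rows $x_1<\cdots<x_{k+1}$ on which $v$ realises its $k$ sign changes yields a nonsingular $(k+1)\times(k+1)$ matrix $\tilde V$ with $\sgn\det\tilde V=\varepsilon_{k+1}$, and $\tilde v=\delta\tilde V a$. Expanding $\tilde v_i=\delta\sum_s(-1)^s\tilde V_{is}$ as a cofactor combination, the order-$k$ minors (sign $\varepsilon_k$) govern the leading sign while the order-$(k+1)$ determinant (sign $\varepsilon_{k+1}$) fixes the orientation, giving $\sgn \tilde v_i=\delta\,\varepsilon_k\varepsilon_{k+1}(-1)^i$. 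Comparing with the block signs $\delta(-1)^s$ of $u$: when $\varepsilon_k\varepsilon_{k+1}=1$ the two arrangements coincide (pattern preserved), and when $\varepsilon_k\varepsilon_{k+1}=-1$ they are reversed, which is exactly the dichotomy asserted.

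I expect the main obstacle to be the \emph{degeneracy bookkeeping}: the null-vector and cofactor arguments require the relevant order-$(k+1)$ minors to be strictly nonzero, whereas sign-regularity only supplies one-sided inequalities and the $u_j$ may create vanishing blocks or boundary zeros. I would dispose of this by a strict-sign-regular perturbation of $K$ (or a density argument in the choice of the points $x_i$ and of $u$), pushing the strict conclusions through to the limit, together with a check that zeros of $u$ collapsed onto block boundaries introduce no spurious sign changes.
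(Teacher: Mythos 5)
The first thing to note is that the paper offers no proof of this statement at all: it is quoted directly from Karlin's monograph (the theorem is even labelled ``Theorem 3.1 \& Proposition 3.1, Chapter 3 in \cite{KarlinTPBook}''), so the only meaningful benchmark is the classical Schoenberg--Karlin argument. Your sketch reconstructs precisely that argument: the sign-block factorisation $u=\delta Ha$ with $H$ totally positive, Cauchy--Binet to transfer the minor signs of $K$ to $V=K'H$, an alternating left-null-vector argument for the variation-diminishing inequality, and a Cramer/cofactor rigidity argument for the equality case yielding the $\varepsilon_k\varepsilon_{k+1}$ dichotomy. In outline this is the right proof, and your sign bookkeeping in part (ii) (initial sign of $v$ equal to $\delta\,\varepsilon_k\varepsilon_{k+1}$ times that of $u$) comes out correctly.

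The gap is the one you flagged yourself, but it is larger than your closing paragraph suggests --- it is where essentially all of the work in the classical proof lives, not a perturbation afterthought. Concretely: in part (i), sign-regularity gives the cofactor vector $c$ only \emph{weak} alternation, so the conclusion ``each term vanishes'' does not produce a contradiction; it only forces $c=0$, i.e.\ that every order-$(k+1)$ minor of the chosen $(k+2)\times(k+1)$ submatrix vanishes, which is merely a rank statement. In part (ii), Cramer's rule needs $\det\tilde V\neq 0$, and nothing in the hypotheses forces the chosen $(k+1)\times(k+1)$ submatrix to be nonsingular. Your proposed repair --- approximating $K$ by a \emph{strictly} sign-regular kernel of the \emph{same} signature --- is not an off-the-shelf tool: Whitney's density theorem covers the totally positive case only; density of strictly $\SR{r}$ matrices with a prescribed signature inside the corresponding $\SR{r}$ class is a separate and nontrivial matter; and composing with strictly $\TP{}$ smoothing kernels cannot manufacture strictness when $K$ is rank-deficient, since by Cauchy--Binet all higher minors of the composition then still vanish. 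The paper's setting moreover allows infinite index sets, so any approximation must also respect the convergence assumptions on $\eL u$, and in part (ii) the conclusion concerns the sign pattern of the \emph{unperturbed} $v$, so one must additionally check that the hypothesis $S(v)=S(u)=k$ and the realised sign pattern survive passage to the limit. So: your proposal is an accurate outline of the cited classical proof, but to stand on its own it needs either the degeneracy analysis carried out directly (as Karlin does, by induction and careful selection of points) or a proved approximation theorem for the relevant class of sign-regular kernels; as written, the hardest step is exactly the one deferred.
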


	This theorem implies that any transformation using a $\TP{r}$ kernel preserves the sign changes of a sequence $u$, provided the transformed sequence has the same number of sign changes, say $k \le r-1$. For $\SR{r}$ kernels, the transformation may preserve or reverse these sign changes, depending on whether the signs of the minors of order $k$ and$ k+1$ ( denoted  $\varepsilon_{k}$ and $\varepsilon_{k+1}$ respectively)  are the same or different.
	
	A key application of the preceding result involves the convexity and concavity of functions. Specifically, Karlin (Proposition 3.2, p. 23 in \cite{KarlinTPBook}) proved that if a kernel  is  $\TP{3}$, then it preserves the convexity and concavity of functions. However, this preservation property does not automatically extend to  $\SR{3}$ kernels, as Karlin also noted in p. 23 in  the same reference. 
	
	Karlin showed that a $\TP{3}$ kernel preserves convexity and concavity (Proposition 3.2, p. 23 in \cite{KarlinTPBook}). However, this doesn't automatically extend to $\SR{3}$ kernels.  He previously claimed that a strictly $\SR{2}$ (no determinant of order 1 or 2 vanishes)  kernel unconditionally preserves the monotonicity direction of a function (see p.343 in \cite{KarlinConvexity}). This claim is incorrect.  While his monograph mentions monotonicity preservation (as in part (i) of the theorem), it omits the earlier result in \cite{KarlinConvexity}. A strictly $\SR{2}$ kernel is still $\SR{2}$ , and by part (ii) of the theorem, the sign patterns of u and v must be opposite. Thus, v would be concave if $u$ is convex, for example.

	The following discussion of higher-order convexity will be essential for understanding the subsequent results on $m$-modality preservation.

	Let $\Delta$ be the difference operator defined on a sequence $u\coloneqq (u_1, u_2, \ldots)$, where $\Delta u \coloneqq (u_2-u_1, u_3-u_2, \ldots)$ and $\Delta^m u = \Delta^{m-1} \Delta u$ for $m > 1$.
	\begin{define}[$m$-convex/concave sequence]\label{def.convex}
		A sequence $u$ is $m$-convex if $\Delta^m u \ge0$ (see p.23 in \cite{KarlinTPBook}). 		More generally, a sequence $u$ is said to be $m$-convex or convex of order $m$ if for an arbitrary polynomial of degree $m - 1$, $p(x) = a_0 x^{m-1} + a_1 x^{m-2} + \cdots + a_{m-1}$, such that $v \coloneqq (v_k)$, where $v_k \coloneqq u_k - p(k)$,  has at most $m$ changes of sign in $I$, and if $m$  sign changes actually occur, they follow the pattern $+-+\cdots$. A sequence $u$ is $m$-concave if $-u$ is $m$-convex. 
	\end{define}
	Based on this definition, a nondecreasing sequence  is 1-convex or convex of order 1. It is convex (2-convex or convex of order 2) when $\Delta^2 u \ge 0$.

	We introduce a second definition of an $m$-modal sequence, inspired by the $m$-convexity definition and by Proposition \ref{prop.sufficient.m-modal}.
	\begin{define}[$m$-modal sequence]\label{def.mmodal}
		A regular sequence $u$ is $m$-modal if,  for arbitrary polynomials $p_1, p_2, p_3$ of degree $2m - 1$, $2m-2$ and $2m-3$, respectively, $u-p_1$  has at most $2m$ sign change; if no such $p_1$ exists, then $u-p_2$ has at most $2m-1$ sign changes; and  If neither $p_1$ nor $p_2$ exist, then $u-p_3$ has at most $2m-2$ sign changes. In each case, the resulting sign pattern must contain $m$ plus signs.
	\end{define}

	\begin{define}[$m$-convexity preserving/reversing kernels]\label{def.mpp}
		Fix $m \ge 1$. Let $\eL:I \times J \to \mr$  be a kernel, where $I, J$ are two ordered sets in $\mn$ with at least $r$ points. $\eL$ is an $m$-\textit{convexity-preserving} kernel, if, for any real-valued $m$-convex (or $m$-concave) sequence $u$ defined on $I$, $\eL u$ is $m$-convex (resp. $m$-concave).
	\end{define}
	The terms convexity-preserving and convexity-reversing are general and include  the property of preserving and reversing concavity, respectively. Reversing convexity is based on reversing the sign of the sequence after transformation. This reflects the fact that if $u$ is convex, then $-u$ is concave. With this sign-reversal concept in mind, we extend the notion of a sign-reversing kernel to $m$-modal sequence as follows.

	\begin{define}[$m$-modality preserving/reversing transformation]\label{def.um}
		$\eL$ is $m$-\textit{modality preserving}, if for all real  $m$-modal sequences $u$ for which $\eL u$ is defined, $\eL u$ is $m$-modal. Conversely, $\eL$  is  $m$-\textit{modality reversing}  if $-\eL u$ is $m$-\textit{modal}. In view of Theorem \ref{prop.reversal.multimodal}, this means that $\eL$ is  $m$-{modality reversing} if there exists an $m$-modal sequence  $\overline{u}$ such that $\eL \overline{u}$ is  nonmodal,  $(m-1)$-{modal}  or $(m+1)$-{modal}.
	\end{define}
	From these definitions, the following properties are clear:
	\begin{lemma}\label{lem.mpp.properties}
		Consider the linear transformation given by  (\ref{def.kernel.sequence-sequence}), the following assertions hold:
		\begin{enumerate}[label=\rm(\roman*)]
			\item If a transformation is $m$-modality preserving (resp. reversing), then its additive inverse  is $m$-modality  reversing (resp. preserving).
			\item The composition  of two  $m$-modality preserving or  two  $m$-modality reversing   transformations  is   $m$-modality preserving.	
			\item The composition of  two transformations, one $m$-modality preserving and the other  $m$-modality reversing  is $m$-modality reversing.
		\end{enumerate}
	\end{lemma}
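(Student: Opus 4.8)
The plan is to obtain all three assertions directly from Definition~\ref{def.um} together with the linearity of the transformation~(\ref{def.kernel.sequence-sequence}), which guarantees $\eL(-u) = -\eL u$ and, more usefully, permits pushing a global sign through a composition. Throughout, $u$ will denote an arbitrary $m$-modal sequence for which the relevant transformations are defined, and I use \emph{preserving} (resp. \emph{reversing}) to mean that $\eL u$ (resp. $-\eL u$) is $m$-modal.

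For part~(i), note that the additive inverse of $\eL$ is the transformation $-\eL : u \mapsto -\eL u$, associated with the kernel $-K$. If $\eL$ preserves, then to test whether $-\eL$ reverses I would compute $-\bigl((-\eL)u\bigr) = \eL u$, which is $m$-modal by hypothesis; hence $-\eL$ reverses. The parenthetical (\emph{resp.}) claim is symmetric: if $\eL$ reverses, then $(-\eL)u = -\eL u$ is $m$-modal by hypothesis, so $-\eL$ preserves.

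For parts~(ii) and~(iii) I would treat each sign combination for a composition $\eL_2 \circ \eL_1$ separately, in every case starting from an $m$-modal $u$ and tracking the sign through the two stages. When both factors preserve, $\eL_1 u$ is $m$-modal and then so is $\eL_2(\eL_1 u)$, giving preservation. When both factors reverse, set $w \coloneqq -\eL_1 u$, which is $m$-modal by the reversing hypothesis on $\eL_1$; by linearity $\eL_2(\eL_1 u) = \eL_2(-w) = -\eL_2 w$, and since $\eL_2$ reverses, $-\eL_2 w$ is $m$-modal, again yielding preservation. This settles~(ii). For~(iii), if $\eL_1$ preserves and $\eL_2$ reverses, then $\eL_1 u$ is $m$-modal and $-\eL_2(\eL_1 u)$ is $m$-modal, so the composition reverses; and if $\eL_1$ reverses while $\eL_2$ preserves, the same substitution $w \coloneqq -\eL_1 u$ gives $-\eL_2(\eL_1 u) = \eL_2 w$, which is $m$-modal because $\eL_2$ preserves, so once more the composition reverses.

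I expect no genuine obstacle: each assertion is bookkeeping once linearity is invoked. The only point demanding care is the definitional asymmetry in \emph{reversing}, which is phrased in terms of $-\eL u$ (rather than $\eL u$) being $m$-modal; in the reversing cases one must therefore introduce the auxiliary $m$-modal sequence $w = -\eL u$ and let linearity convert $\eL(-w)$ into $-\eL w$ before applying the next hypothesis. One should also note that the composition $\eL_2\circ\eL_1$ is well defined under the paper's standing domain and convergence conventions on~(\ref{def.kernel.sequence-sequence}).
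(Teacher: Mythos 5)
Your proposal is correct and follows essentially the same route as the paper: both arguments are sign bookkeeping via linearity and Definition~\ref{def.um}, with part~(i) proved directly from the definition and the composition cases handled by cancelling or propagating a single minus sign. The only cosmetic difference is that the paper writes each reverser as the additive inverse of a preserver (invoking part~(i)) before composing, whereas you track the sign through arbitrary reversers directly via the auxiliary sequence $w = -\eL_1 u$; the logical content is identical.
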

	\begin{proof}
		Suppose that $\eL_1, \eL_2$ are $m$-modality preserving and let $u$ be any $m$-modal sequence for which these kernels are defined. Let $\negL{L}_1 \coloneqq -\eL_1$ and $\negL{L}_2 \coloneqq -\eL_2$. 	
		The proofs are straightforward. The first follows directly from the definition of $m$-modal kernels. The second demonstrates that the $m$-modality-preserving property is closed under composition. Given $v_1 = \eL_1 u,~ v_2 = \eL_2u$ are both $m$-modal, then  $\eL_1\eL_2(u) = \eL_1v_2$ and $\eL_2\eL_1(u) = \eL_2v_1$, are also $m$-modal. From this, we deduce that  both $\negL{L}_1\negL{L}_2$ and $\negL{L}_2\negL{L}_1$ are $m$-modality preserving,  as $\negL{L}_1\negL{L}_2(u) = \eL_1\eL_2(u)$ and  $\negL{L}_2\negL{L}_1(u) = \eL_2\eL_1(u)$. Finally, $\eL_1\negL{L}_2(u) = -\eL_1\eL_2(u)$, and  conversely $\eL_2\negL{L}_1(u) = -\eL_2\eL_1(u)$. Since $\eL_1\eL_2$ is $m$-modality preserving by property \rm(ii), its additive inverse, $-\eL_1\eL_2$,  is   $m$-modality reversing  by property \rm(i) and the desired result is obtained.
	\end{proof}
	We now focus on  the special case $m=1$ of $m$-modal sequences and establish a sufficient condition for a transformation to be unimodality-preserving/reversing. We first require the following auxiliary result before presenting the main theorem.
	\begin{lemma}\label{lem.bound}
		Suppose that $\eL$ is a transformation defined in (\ref{def.kernel.sequence-sequence}) based on a $\TP{1}$  kernel $K: I\times J \to \mr$, where $K(n,\cdot)$ attains all of its values in $J$. Let $v \coloneqq \eL u$, where $u$ is a real sequence.  If  $(u_k)_{k \ge l}$ attains its maximum (resp. minimum) for each $l \in I$, then $(v_k)_{k \ge n}$ attains its maximum (resp. minimum) for each $n \in J$. If, however, $(u_k)_{k \ge l}$ does not attain its maximum (resp. minimum) for some $l \in I$, then there exists a sign-regular kernel  $M$ for which  $(v_k)_{k \ge n}$ where $v_k \coloneqq \sum_{j \in J} M(k,j)u_j$,  does not attain its maximum (resp. minimum) on $J$. 
	\end{lemma}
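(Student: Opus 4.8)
The plan is to treat the two assertions separately, reducing first to the case of an infinite sequence, since for a finite sequence every tail is a finite set and trivially attains its extrema. For infinite sequences I would invoke Theorem \ref{thm.multimodal.maximum2}: the hypothesis that every tail attains its maximum is exactly the condition making $u$ modal of some order $m \le \infty$, so the lemma amounts to showing that a nonnegative kernel transports this regularity from $u$ to $v$. I would prove the maximum statement in full and recover the minimum statement by symmetry, applying the maximum case to $-u$ and using that $K \ge 0$, which holds because $\TP{1}$ means every entry $K(n,j)$ is nonnegative.

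For the forward direction I would first extract boundedness. Since $\sup_j u_j = d$ is the maximum of the full tail, it is attained and finite, so $u_j \le d$ for every $j$; nonnegativity of $K$ then yields the sandwich $v_n \le d\sum_{j} K(n,j)$, and the assumed uniform convergence of the defining sums in $n$ upgrades this into a genuine upper bound on the sequence $v$. Hence $\sup_{k \ge n} v_k$ is finite for every $n$, and it remains only to establish that it is attained.

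The heart of the argument — and the step I expect to be the main obstacle — is ruling out that a tail supremum of $v$ is approached only ``at infinity'' without ever being realized. Here I would argue by contradiction: if some tail $(v_k)_{k \ge n}$ had unattained supremum, I could pick indices $k_i \to \infty$ with $v_{k_i} \to \sup_{k \ge n} v_k$ and analyse $v_{k_i} = \sum_j K(k_i,j) u_j$. The two ``attains all its values'' hypotheses are precisely what permit a clean passage to the limit: the rows $K(k_i,\cdot)$ attain their values over $J$ and $u$ attains its maxima on every tail, so, using uniform convergence to interchange the limit in $i$ with the summation over $j$, the limiting value must already be realized by $v$ at a finite index, contradicting non-attainment. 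Making this limit-exchange rigorous — in particular controlling the mass of $K(k_i,\cdot)$ that could drift out to large $j$ — is the delicate point, and it is exactly where both the nonnegativity of the kernel and the uniform convergence are indispensable.

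Finally, for the converse assertion I would simply exhibit a witnessing kernel. Taking $M$ to be the identity kernel $M(k,j)=\delta_{kj}$, or equally a shift, which is totally positive and hence $\SR{}$ of every order, gives $v = u$, so the failure of the attainment hypothesis for $u$ is inherited verbatim by $v$. This shows that the hypothesis on $u$ in the forward direction cannot be dropped for an arbitrary sign-regular kernel, completing the lemma.
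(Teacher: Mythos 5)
Your handling of the second assertion (identity kernel as the witness $M$) coincides with the paper's own proof, and your reduction of the minimum case to the maximum case via $-u$ is sound, since $\TP{1}$ does mean entrywise nonnegativity. The forward direction, however, contains a genuine gap, and it sits exactly at the step you yourself label the ``delicate point''. The contradiction you aim for cannot be extracted: along a maximizing subsequence $v_{k_i} \to \sup_{k \ge n} v_k$, uniform convergence of the series $\sum_j K(k_i,j)u_j$ gives no control over the rows $K(k_i,\cdot)$ as $i \to \infty$; there is no compactness forcing $\lim_i K(k_i,j)$ to exist; and even granting its existence, the limiting row $j \mapsto \lim_i K(k_i,j)$ need not coincide with any actual row of $K$, so the limit $\sum_j \bigl(\lim_i K(k_i,j)\bigr)u_j$ need not equal $v_n$ for any finite $n$. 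Nothing forces the supremum to be ``already realized at a finite index''.

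In fact, no argument can close this gap using only the hypotheses you invoke. Take $u \equiv 1$, so every tail of $u$ attains its maximum and minimum, and define $K(1,1)=1$, $K(n,1)=1-1/n$ for $n \ge 2$, and $K(n,j)=0$ for $j \ge 2$. Then $K$ is $\TP{1}$; every row, every column, and $K$ itself attain their suprema and infima; and the defining series converge uniformly (only one column is nonzero). Yet $v=(1,\, 1/2,\, 2/3,\, 3/4, \ldots)$, so the tail $(v_k)_{k \ge 2}$ has supremum $1$ and never attains it: the maximizing subsequence escapes to infinity exactly as you feared. So row-wise (or even global) attainment for $K$, tail attainment for $u$, and uniform convergence together do not yield tail attainment for $v$; any proof must exploit the attainment hypothesis on the kernel in a stronger way. (Note also that the paper's own terse argument --- ``if $u$ has a maximum, then the sum of products has a maximum'' --- only addresses the maximum of the whole sequence $v$, not of its tails, which is what the statement requires.) Two minor further points: the appeal to Theorem \ref{thm.multimodal.maximum2} is an unnecessary detour, since the tail-attainment hypothesis on $u$ is what you actually use, not modality; and your boundedness step tacitly needs $\sup_n \sum_j K(n,j) < \infty$, which does not follow from uniform convergence of $\sum_j K(n,j)u_j$ alone and itself requires justification.
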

	\begin{proof}	
		Since $K$  is $\TP{1}$, it is  nonnegative and $K(i,j)$  attains all of its values as $i$ and $j$ traverse $I$ and $J$ respectively. If $u$ has a maximum, then the sum of products $\sum_{j \in J} K(i,j)u_j$ also has a maximum. Conversely, if $u$ has a minimum, the sum of products attains its minimum. Thus, $v$ has a maximum in the first case and a minimum in the second. If there exists an $l \in I$ such that $(u_k)_{k \ge l}$ has no maximum  (resp. no minimum), consider the identity or permutation transformation, whose kernels are $\TP{}$ and $\SR{}$, respectively. Since the terms of $v$ and $u$ are identical in both cases,  the proof is complete.
	\end{proof}
	
	\begin{thm}\label{thm.unimodality.preserving} 
		Let $\eL$  be a transformation based on a kernel $K$  as defined in (\ref{def.kernel.sequence-sequence}). Suppose $K$ is $\SR{3}$. Then  $\eL$ is  unimodality-preserving iff 
		$\varepsilon_{1}=1$ and $\varepsilon_{2}=\varepsilon_{3}$. Conversely,   $\eL$ is  unimodality-reversing iff $\varepsilon_{1}=-1$ and $\varepsilon_{2}=-\varepsilon_{3}$.
	\end{thm}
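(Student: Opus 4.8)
The plan is to establish the preserving equivalence first and then read off the reversing one by a sign-flip of the kernel. Throughout I would use Corollary~\ref{lem.sign.chg} as the working criterion: a regular sequence $w$ is unimodal exactly when $S^+(w)\le 2$ and every shift $w-\lambda$ with $S(w-\lambda)=2$ displays the pattern $-+-$. The goal is thus to transfer these two features from $u$ to $v=\eL u$. For the reduction, let $\overline{\eL}$ be the transformation with kernel $-K$. A $k\times k$ minor of $-K$ equals $(-1)^k$ times the corresponding minor of $K$, so the minor signs transform as $\overline{\varepsilon}_1=-\varepsilon_1$, $\overline{\varepsilon}_2=\varepsilon_2$, $\overline{\varepsilon}_3=-\varepsilon_3$. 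Hence ``$\varepsilon_1=-1$ and $\varepsilon_2=-\varepsilon_3$'' for $K$ is the same as ``$\overline{\varepsilon}_1=1$ and $\overline{\varepsilon}_2=\overline{\varepsilon}_3$'' for $-K$; since $-\eL u=\overline{\eL}u$, Definition~\ref{def.um} together with Lemma~\ref{lem.mpp.properties}(i) lets me deduce the reversing statement from the preserving one. It therefore suffices to prove that $\eL$ is unimodality-preserving iff $\varepsilon_1=1$ and $\varepsilon_2=\varepsilon_3$.

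For sufficiency, assume $\varepsilon_1=1$ and $\varepsilon_2=\varepsilon_3$ and fix a unimodal $u$. Since $\varepsilon_1=1$ makes $K$ a $\TP{1}$ (nonnegative) kernel, Lemma~\ref{lem.bound} guarantees that every tail of $v$ attains its maximum, and by Theorem~\ref{thm.multimodal.maximum2} this makes $v$ regular and modal of some finite order, so Corollary~\ref{lem.sign.chg} applies to $v$. It then remains to bound $S^+(v)$ and to pin down the two-sign-change pattern. The natural device is Karlin's variation-diminishing theorem (Theorem~\ref{def.vd}) with $r=3$: the shifted inputs $u-\lambda$ have at most two sign changes following $-+-$, and because $\varepsilon_2=\varepsilon_3$ gives $\varepsilon_2\varepsilon_3=1$, the order-$2$ pattern is preserved as $-+-$ rather than reversed to the bimodal $+-+$. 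Fed back through Corollary~\ref{lem.sign.chg}, this yields unimodality of $v$.

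The delicate point---the step I expect to be the main obstacle---is that $\eL$ does not commute with subtraction of a constant: writing $g\coloneqq\eL\mathbf 1$, one has $\eL(u-\lambda\mathbf 1)=v-\lambda\,g$, and $g$ need not be constant, so $S(v-\mu\mathbf 1)$ is \emph{not} literally the sign-change count of an image of $u-\lambda$. This mismatch is exactly why a third order of sign-regularity (one more than for convexity) is consumed. I would resolve it by passing to first differences, recasting unimodality of the regular sequence $v$ as ``$\Delta v$ changes sign at most once, from $+$ to $-$'', and expressing $\Delta v$ via an Abel summation against $\Delta u$; the sign bookkeeping of this composed object is where $\varepsilon_2$ and $\varepsilon_3$ must be shown to govern the obstruction, and where the boundary terms (finite by regularity) must be argued to vanish. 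Verifying that the two-sign-change obstruction for $v$ is controlled by the order-$2$ and order-$3$ minors of $K$ alone is the crux.

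For necessity I would argue by contraposition, and here the constant-shift difficulty disappears because $v-\lambda g=\eL(u-\lambda\mathbf 1)$ really is an image of a unimodal sequence. If $\varepsilon_2\ne\varepsilon_3$, pick a genuine ``tent'' $u$ and a shift $\lambda^\ast$ with $S(u-\lambda^\ast)=2$, chosen so the image retains two sign changes; Theorem~\ref{def.vd}(ii) with $k=2$ reverses the $-+-$ pattern to $+-+$, so $v-\lambda^\ast g$ is bimodal by Lemma~\ref{lem.bimodal}, contradicting preservation. If $\varepsilon_1=-1$, then $K$ is nonpositive, and the second part of Lemma~\ref{lem.bound} supplies inputs whose images violate the finite-maximum requirement of Theorem~\ref{thm.multimodal.maximum2} (equivalently, when $\varepsilon_1\varepsilon_2=-1$ the monotonicity direction flips, turning an increase-then-decrease profile into a valley). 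Either way preservation fails, so $\varepsilon_1=1$ and $\varepsilon_2=\varepsilon_3$ are necessary. Transporting this equivalence through $\overline{\eL}$ as in the first paragraph then yields the reversing characterisation $\varepsilon_1=-1$, $\varepsilon_2=-\varepsilon_3$, completing the argument.
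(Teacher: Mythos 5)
Your proposal follows the same skeleton as the paper's proof: the reversing half is deduced exactly as you do, by applying the preserving half to $-K$ together with the identity that a $k\times k$ minor of $-K$ is $(-1)^k$ times that of $K$; sufficiency is run through Theorem~\ref{def.vd} with $r=3$ (with $\varepsilon_2\varepsilon_3=1$ forcing the $-+-$ pattern to survive) together with Lemma~\ref{lem.bound}; and necessity is argued by contraposition with the same two cases, namely $\varepsilon_2\neq\varepsilon_3$ (a two-sign-change image has pattern $+-+$, hence is bimodal by Lemma~\ref{lem.bimodal}) and $\varepsilon_1=-1$ (failure of attainment of extrema via Lemma~\ref{lem.bound}; the paper in fact only exhibits the kernel $K=-I$ here). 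So structurally you have reproduced the paper's argument, and your necessity half is, if anything, slightly more careful, since you observe that $v-\lambda g=\eL(u-\lambda\mathbf{1})$ genuinely is the image of a unimodal sequence.

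The difference lies in the sufficiency step, and there your proposal has a real gap: the point you flag as the crux --- that $\eL$ does not commute with constant shifts, so the VD property controls $S(\eL w)$ but not $S(v-\mu\mathbf{1})$ --- is left as a plan (first differences, Abel summation) rather than a proof. You should know two things. First, the paper's own proof never addresses this point at all: it concludes, for instance, that ``$v_0$ is constant by the VD property,'' which VD does not give (it gives only $S(v_0)=0$), i.e.\ it silently identifies unimodality of $v$ with the sign pattern of $v$ itself, which is exactly the illegitimate identification you refused to make. Second, your hesitation is justified because no amount of bookkeeping can close the gap under the stated hypotheses: take $K=\mathrm{diag}(2,1,2)$, which is $\TP{3}$ (all minors are nonnegative, so $\varepsilon_1=1$ and $\varepsilon_2=\varepsilon_3=1$), and the constant, hence unimodal, sequence $u=(1,1,1)$; then $\eL u=(2,1,2)$, and since $S\bigl((2,1,2)-1.5\bigr)=2$ with pattern $+-+$, the image is bimodal, so $\eL$ is not unimodality-preserving. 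The ``if'' direction therefore fails as stated, and your proposed Abel-summation repair would fail with it; the statement needs an extra normalisation such as $\eL\mathbf{1}$ being constant (constant row sums), which is precisely the kind of hypothesis the paper builds into $\eL_m$ in Section~4 (polynomials map to polynomials of the same degree) but omits here. With that hypothesis added, the shifted-input argument you describe in your second paragraph --- and the paper's proof --- goes through without any need for the difference-operator machinery.
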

	
	\begin{proof}
		Let $u_0, u_1, u_2$ be three unimodal sequences with 0, 1 and 2 sign changes, respectively, and let $v_i = \eL u_i$ for $i=0,1,2$. For the necessity part, suppose $\varepsilon_1=1$ and  $\varepsilon_{2}=\varepsilon_{3}$. If $u_0$ is constant, then $v_0$ is also constant by the VD property of $K$ and therefore unimodal. Similarly, $v_1$, which attains all of its values by Lemma \ref{lem.bound},  must be monotone or constant given the VD property of $K$ and is thus unimodal. Finally,  the VD property of $K$ implies that $v_2$ has at most two sign changes. If $S(v_2) \le 1$,  the proof is complete as $v_2$ is monotone and attains all of its values. If  $S(v_2)=2$, then  part \rm(ii) of Theorem \ref{def.vd}  establishes  that  $u_2$ and $v_2$ have the same sign pattern  $-+-$, making $v_2$ unimodal. Since $\eL$ maps arbitrary unimodal sequences to unimodal sequences, it is unimodality-preserving.

		To prove sufficiency, we proceed by contraposition. Suppose  $\varepsilon_{1}=-1$ or $\varepsilon_{2} \ne \varepsilon_{3}$. First, assume  the latter holds. In this scenario if $S(v_2) =S(u_2)=2$,  then $v_2$ is bimodal by Lemma \ref{lem.bimodal}, so, $\eL$ is not unimodality-preserving. Now, consider the case $\varepsilon_{1}=-1$. Suppose $S(v_1) =S(u_1)=1$ and $u_1$ does not attain its minimum. Then $-u_1$ does not attain its maximum. Since $-K$ is $\TP{1}$,  the sequence $(v^{(1)}) \coloneqq (v^{(1)}_k)$, where $ v^{(1)}_k =\sum_{j \in J}K(k,j) u^{(1)}_j = \sum_{j \in J}-K(k,j) (-u^{(1)}_j)$ does not attain its maximum for some choice of $-K$ by Lemma \ref{lem.bound}. Choose $K=-I$, where $I$ is the identity matrix, which satisfies the conditions $\varepsilon_1=-1$ and $\varepsilon_2=-\varepsilon_2$. Thus, $\eL$ is not unimodality-preserving, proving the contraposition. 
		
		For the second part of the theorem, recall that if $M$ is an $n\times n$ matrix whose determinant is $d$, then the determinant of  $-M$ is $(-1)^n d$. The unimodality-reversing result follows by applying  the first part to the transformation $-\eL$ associated with the kernel  $-K$. This completes our proof.
		
	\end{proof}
	
	From this point forward, we will use UP to abbreviate an $\SR{3}$ unimodality-preserving kernel or transformation and UR for an $\SR{3}$ unimodality-reversing kernel or transformation.
	
	\subsection{Preserving Unimodality for Differences, Ratios and Sums}
	One of the ramifications of Theorem \ref{thm.unimodality.preserving}, combined with the VD property,  is that if $v$ is a sequence resulting from  a UP transformation,  it cannot change direction before the input sequence. This is illustrated in the following lemma.

	\begin{lemma}\label{lem.modelocation}
		Let $a,b,c,d \in \mn$. Suppose that $u$ is an arbitrary sequence and let $v \coloneqq   \eL u$, where $\mathcal{L}$ is UP. If $u$ is unimodal with mode interval $[a,b]$, then $v$ is unimodal with a mode interval $[c,d]$	such that
		$$a \le c \le b \le d ~\text{ or }~ a \le b \le c \le d.$$
	\end{lemma}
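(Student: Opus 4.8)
The plan is to first secure the existence of the mode interval and then pin down its position by a variation-diminishing argument applied to level sets. Since $\eL$ is UP and $u$ is unimodal, Theorem \ref{thm.unimodality.preserving} guarantees that $v=\eL u$ is unimodal, so a mode interval $[c,d]$ exists; Lemma \ref{lem.bound} further ensures that $v$ actually attains its maximum, so $[c,d]$ is well defined. I would then observe that, given $a\le b$ and $c\le d$, the stated dichotomy ``$a\le c\le b\le d$ or $a\le b\le c\le d$'' is equivalent to the two inequalities $c\ge a$ and $d\ge b$: if $c\le b$ the first case holds, and if $c>b$ the second does. Thus it suffices to establish these two inequalities, and since $d\ge b$ is obtained from $c\ge a$ by reversing the index order of the sequences (a reflection $R$ conjugates $\eL$ into a kernel $R\eL R$ of the same sign-regularity class, again UP, and carries unimodal sequences to unimodal sequences), I would prove $c\ge a$ in detail and deduce $d\ge b$ by this reflection symmetry.

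To prove $c\ge a$, I would argue by contradiction, assuming $c<a$, and translate the claim into a statement about the sign changes of a level set. Let $\mu$ denote the second largest value taken by $u$ and choose $\lambda\in(\mu,\max u)$, so that $u-\lambda$ exhibits the pattern $-\,+\,-$ whose positive block is exactly the set of maximisers $[a,b]$ of $u$ (Corollary \ref{lem.sign.chg}). The key point is that $\eL$ cannot push a sign change of the transformed level set strictly to the left of the first sign change of $u-\lambda$: by the VD property together with part \rm(ii) of Theorem \ref{def.vd} (applied with $k=2$, where $\varepsilon_2=\varepsilon_3$ holds for a UP kernel), whenever the number of sign changes is preserved the positive block of the image occurs in the same order and cannot begin before index $a$. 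Combined with the unimodality of $v$ supplied by Corollary \ref{lem.sign.chg}, this forces the maximising run $[c,d]$ of $v$ to start at an index $c\ge a$, contradicting $c<a$.

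The main obstacle is precisely this middle step: upgrading the purely quantitative inequality $S(v-\lambda)\le S(u-\lambda)$ to the qualitative, \emph{positional} assertion that the leading sign change of the image cannot migrate strictly to the left of that of the input. This is exactly where the sharp sign-pattern conclusion of Theorem \ref{def.vd}\rm(ii) and the standing hypotheses on $\eL$ must be invoked, namely $\varepsilon_1=1$, the attainment condition on $K(\cdot,j)$, and the uniform convergence underlying Lemma \ref{lem.bound}. Care is also required for the flat maximal runs of $u$ and $v$, where the endpoints $a,b,c,d$ may coincide, and for the degenerate monotone cases in which one of the outer level sets contributes only a single sign change rather than the full $-\,+\,-$ pattern; in those cases the bound $1\le S^{+}(v)\le 2$ from the unimodal specialisation should be used to keep the argument uniform.
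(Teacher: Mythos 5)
Your overall strategy---cutting $u$ at a level just below its maximum so that the positive part of the cut is exactly the mode interval, then invoking the variation-diminishing property and the sign-pattern statement of Theorem \ref{def.vd}(ii) to locate the mode of $v$---is essentially the paper's own strategy, and your reduction of the stated dichotomy to the two inequalities $c\ge a$ and $d\ge b$ is correct. However, the step you yourself flag as ``the main obstacle'' is never actually overcome, and it cannot be overcome with the tools you cite. Theorem \ref{def.vd}(ii) asserts only that the input and output exhibit the same \emph{ordered pattern} of signs (e.g.\ $-+-$); it carries no information about \emph{where} the sign changes occur. Sign counts and sign patterns are invariant under index translation, and a translation is itself a UP kernel: the kernel $K(i,j)=1$ if $j=i+1$ and $K(i,j)=0$ otherwise has all minors equal to $0$ or $1$, hence is totally positive with $\varepsilon_1=1$ and $\varepsilon_2=\varepsilon_3$, yet it sends $v_n=u_{n+1}$, preserving every sign count and every sign pattern while moving every sign change---and the mode interval---one place to the \emph{left}. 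Consequently no argument using only the cited variation-diminishing and pattern-preservation statements can yield ``the positive block of the image cannot begin before index $a$''; asserting that is assuming precisely what the lemma claims, so your proof is circular at its core. (The paper does not get this from Theorem \ref{def.vd}(ii) alone either: it runs a separate, hands-on argument with explicit levels built from $u_{a-1}$, $u_a$, $u_{b+1}$ and from the entries of $v$, treating the monotone case apart.) A further technical point you gloss over: by linearity $\eL(u-\lambda)=v-\lambda\,\eL\mathbf{1}$, and $\eL\mathbf{1}$ need not be constant, so the ``transformed level set'' is not a level set of $v$, and relating its sign changes to the maximising run of $v$ requires justification.

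The symmetry reduction is also the wrong way around. Conjugating by the order reversal $R$ of a finite index set $[N]$ exchanges first and last mode indices and reverses inequalities: the reflected input $Ru$ has mode interval $[N+1-b,\,N+1-a]$ and the reflected output has mode interval $[N+1-d,\,N+1-c]$, so applying the universal claim ``$c\ge a$'' to the (still UP) kernel $R\eL R$ yields $N+1-d\ge N+1-b$, i.e.\ $d\le b$---not $d\ge b$. The lemma's conclusion, that \emph{both} endpoints of the mode interval move weakly to the right, is visibly not reflection-symmetric, so it cannot be deduced from half of itself by reflection; moreover $R$ is unavailable when the index set is all of $\mn$. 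The paper instead obtains the second inequality directly, by contradiction: if the mode run of $v$ ended at some $d<b$, then $v$ starts to decrease at $d+1\le b$, and cutting $v$ at a level chosen from $v_{c-1}$, $v_d$, $v_{d+1}$ produces a sign configuration it argues is incompatible with the variation-diminishing property.
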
	
	\begin{proof}
		Since both $u$ and $K$ are nonconstant, $v$ is also nonconstant. Because  $\eL$ is a UP transformation, $v$ is unimodal. Let its mode interval be $[c, d]$. If $u$ is monotone, then the VD property implies  $v$ is also monotone, and both have at most one sign change. Since $S(u-u_{b})=0$, clearly $S(v-\eL u_{b})=0$. Thus, the first and last mode indices of $v$ must be on or after $[a, b]$. 
		
		Now, suppose  $u$ is not monotonic.  Let $\lambda=u_a / 2 + \max(u_{a-1}, u_{b+1}/2)$, with $u_{0}=u_1$ and $u_{N+1}=u_N$ if $u$ is finite with $N$ terms. It is easy to see that  $S(u-\lambda)=2$, and $S(u-u_{a})=S(u-u_{b})=0$. By the VD property, $v$ has zero sign change  up to index $b$ and  may change direction at $b+1$ as $S(v-\eL \lambda) \le 2$. Thus, the mode index of $v$ can not be less than $b$, and  the first mode index of $v$ must be on or after $[a, b]$. If the last mode index of $v$ satisfies $a < c  \le d < b$  then at index $d+1 \le b$, $v$ starts to decrease, so $v-\lambda$, where $\lambda \coloneqq \max(v_{d+1}, v_{c-1})/2 + v_{d}/2$; assuming $v_{0}=v_1$ and $v_{N+1}=v_N$ when $v$ is finite with $N$ terms, changes its sign from $+$ to $-$. But this is not possible by the VD property. This proves the  lemma.
	\end{proof}

	We now turn our attention to recurrence relations associated with unimodal sequences formed from ratios and differences of other sequences.
	\begin{assertion}\label{prop.recursion.up.diff.unimodal}
		Let $\eL$ be a linear transformation based on a matrix $K$ as defined in (\ref{def.kernel.sequence-sequence}).  For $n \in \mn$, define the sequence $v_{(n)} \coloneqq \eL^n u$,  
		where $u$ is an arbitrary nonconstant real sequence. Given that $u-v^{(1)}$ is unimodal, the following hold:
		\begin{enumerate}[label=\rm(\roman*)]
			\item If $\mathcal{L}$ is UP  then $v_{(n)} - v_{(n+1)}$ is unimodal.
			\item If $\mathcal{L}$ is UR then  $(-1)^n \left\{ v_{(n)} - v_{(n+1)}\right\}$ is unimodal.
		\end{enumerate}
		Given  that $K$  positive, $u$  nonnegative and $u/v_{(1)}$ unimodal, the following assertions hold:
		\begin{enumerate}[label=\rm(\roman*), start=3]
			\item If $\mathcal{L}$ is UP then $v_{(n)} / v_{(n+1)}$ is unimodal.
			\item If $\mathcal{L}$ is UR  then $(-1)^n v_{(n)} / v_{(n+1)}$ is unimodal.
		\end{enumerate}
	\end{assertion}
	\begin{proof}
		Suppose $u-v_{(1)}$ is unimodal, the first assertion is clear because $v_{(n)} - v_{(n+1)} = \mathcal{L}^{n}(u-v_{(1)})$ and $ \mathcal{L}^{n}$ is UP by assertion \rm(ii) of Lemma \ref{lem.mpp.properties}.  
		For part \rm(ii), $\mathcal{L}$ is  UR. Therefore, $(-\mathcal{L})^{n}$ is UP by properties \rm(i) and \rm(iii) of the same lemma. Thus, $(-1)^{n}\mathcal{L}^{n} \left(u-v_{(1)}\right) = (-1)^n \left\{ v_{(n)} - v_{(n+1)}\right\}$ is unimodal. 
		
		Now, suppose $K > 0$, $u \ge 0$ and $u/v_{(1)}$  is unimodal. $u/v_{(1)}$ is well defined because  $v_{(1)} \ne 0$ ($u$ is not identically zero and $K$ is positive). For part \rm(iii), given these assumptions and the fact that $\mathcal{L}^n$ is $\TP{1}$, $v_{(n)}$ and $v_{(n+1)}$ are positive.  Define $h_n \coloneqq v_{(n)} / v_{(n+1)}$. For  any $\lambda \in \mr$, 	we have, by linearity, the identity:
		\begin{alignat}{2}
			h_n -\lambda &= \frac{v_{(n)}}{v_{(n+1)}}  - \lambda = \frac{\mathcal{L}\left\{ v_{(n-1)} - \lambda v_{(n)} \right\}}{v_{(n+1)}}\nonumberj
			&= \frac{\mathcal{L}\left\{ v_{(n)}\left( v_{(n-1)}/v_{(n)} - \lambda  \right)\right\}}{v_{(n+1)}}.\label{identity.sgn.unimodal}
		\end{alignat}	
		From this identity, $\sgn(h_n-\lambda) = \sgn(v_{(n-1)}/v_{(n)}-\lambda) = \sgn(h_{n-1}-\lambda)$. Proceeding iteratively, we see at once that $\sgn(h_n-\lambda) =  \sgn(h_{n-1}-\lambda) = \cdots= \sgn(h_0-\lambda)$. Since $h_0=u/v_{(1)}$ is unimodal by assumption, $h_n$ is unimodal for all $n \ge 1$. 
		
		The final assertion  \rm(iv) is similar to part \rm(iii), but with $\mathcal{L}$ being UR. Following the same steps, we find  that for any $\lambda \in \mr$, 
		$(-1)^{n}\sgn(h_n-\lambda) =  (-1)^{n-1}\sgn(h_{n-1}-\lambda) = \cdots= \sgn(h_0-\lambda)$. Therefore, $(-1)^nh_n$ is unimodal, as required.
	\end{proof}
	We can say more if the linear transformation is invertible, as the next proposition demonstrates.
	\begin{assertion}\label{prop.recursion.dn.diff.unimodal}
		Suppose  $\mathcal{L}$ is an invertible linear transformation, and let $\mathcal{L}^{-1}$ denote its inverse transformation. Given the sequence $v_{(n)} \coloneqq \mathcal{L}^n u$,  where $u$ is an arbitrary real sequence and  $n \in \mn$, the following hold:
		\begin{enumerate}[label=\rm(\roman*)]
			\item If $\mathcal{L}^{-1}$ is UP and  $v_{(n)} - v_{(n+1)}$ is unimodal, then  $v_{(m-1)} - v_{(m)}$   is unimodal  for all $m \in [n]$.
			\item If $\mathcal{L}^{-1}$ is UR and $(-1)^n \left\{ v_{(n)} - v_{(n+1)}\right\}$ is unimodal, then $(-1)^{m-1} \left\{ v_{(m-1)} - v_{(m)}\right\}$   is unimodal  for all $m \in [n]$.
			
		\end{enumerate}
		Furthermore, if $K$ is positive and $u$ nonnegative, we have:	
		\begin{enumerate}[label=\rm(\roman*), start=3]
			\item If $\mathcal{L}^{-1}$ is UP and $v_{(n)} / v_{(n+1)}$  is unimodal, then $v_{(m-1)}/v_{(m)}$is unimodal  for all $m \in [n]$.		
			\item If $\mathcal{L}^{-1}$ is UR and  $(-1)^n v_{(n)} / v_{(n+1)}$ is unimodal, then  $v_{(m-1)}/v_{(m)}$  is unimodal  for all $m \in [n]$.
		\end{enumerate}	
	\end{assertion}
	\begin{proof}
		Part \rm(i) is obvious, since  $v_{(n-1)} - v_{(n)}= \mathcal{L}^{-1}(v_{(n)} - v_{(n+1)})$, and  $\mathcal{L}^{-m}$ is UP for every $m \in [n]$ by assertion \rm(ii) of Lemma \ref{lem.mpp.properties}.  For part \rm(iii), suppose $K > 0$ and $u \ge 0$. Given these  assumptions and the fact that $\mathcal{L}^n$ is $\TP{1}$, $v_{(n)}$ and $v_{(n+1)}$ are positive.  As before, define $h_n \coloneqq v_{(n)} / v_{(n+1)}$. For  any $\lambda \in \mr$, 	we have by the linearity of $\mathcal{L}$:
		\begin{alignat}{2}
			h_{n-1}-\lambda  
			&= \frac{\mathcal{L}^{-1}\left\{ v_{(n+1)}\left( v_{(n)}/v_{(n+1)} - \lambda  \right)\right\}}{v_{(n)}}.\label{identity.sgn.unimodal2}	
		\end{alignat}	
		Thus, $\sgn(h_{n-1}-\lambda) = \sgn(v_{(n)}/v_{(n+1)}-\lambda) = \sgn(h_{n}-\lambda)$. Iterating, we see that $\sgn(h_n-\lambda) =  \sgn(h_{n-1}-\lambda) = \cdots= \sgn(h_0-\lambda)$. Since $h_n$ is unimodal by assumption, $h_m$ is unimodal for all $m \le n$. The other two assertions are proven similarly to parts \rm(ii)-\rm(iv) of the previous proposition, with minor adjustments. 			
	\end{proof}
	
	In the final part of this section, we introduce a specific type of unimodal sequences. While the set of unimodal sequences is not closed under addition,  the sum of two sequences that are monotone in the same direction remains monotonic.  Therefore, the sum of two unimodal sequences may still be unimodal, provided certain conditions are met regarding the location of their modes.
	
	\begin{thm}\label{thm.closure.sum}
		Suppose $u$ and  $v$ are two unimodal real sequences with mode intervals $[a,b]$ and $[c,d]$, respectively, where $a,b,c,d \in \mn$, $a \le b$ and $c \le d$. If $a \le c \le b$  or $c = b+1$, then  $u+v$ is unimodal.
	\end{thm}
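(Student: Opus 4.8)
The plan is to work with the first-difference sequences $\Delta u$ and $\Delta v$, since a regular sequence is unimodal precisely when its first differences pass from a block of nonnegative values to a block of nonpositive values (equivalently, by Corollary \ref{lem.sign.chg}, $S^+$ does not exceed $2$ and any attained pattern of length two sign changes is $-,+,-$). For the unimodal sequence $u$ with mode interval $[a,b]$ I would record the three facts $\Delta u_j \ge 0$ for $j \le b-1$, $\Delta u_j \le 0$ for $j \ge b$, and $\Delta u_j = 0$ for $j \in [a,b-1]$, and likewise for $v$ with $[c,d]$. Setting $w \coloneqq u+v$, so that $\Delta w = \Delta u + \Delta v$, the goal reduces to showing that $\Delta w$ changes sign at most once, from nonnegative to nonpositive.

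First I would treat the hypothesis $a \le c \le b$. On the initial segment $j \le \min(b,d)-1$ both differences are nonnegative, and for $j \ge \max(b,d)$ both are nonpositive, so $\Delta w$ already has the correct sign there without invoking the hypothesis. The only difficulty is the intermediate \emph{conflict region} $\min(b,d) \le j < \max(b,d)$, where one sequence increases while the other decreases. Here the hypothesis is exactly what is needed: if $b \le d$, then $c \le b$ forces this region into $[c,d-1]$, the flat top of $v$, so $\Delta v_j = 0$ and $\Delta w_j = \Delta u_j \le 0$; if $d < b$, then $a \le c \le d$ forces it into $[a,b-1]$, the flat top of $u$, so $\Delta u_j = 0$ and $\Delta w_j = \Delta v_j \le 0$. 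In either case $\Delta w_j \le 0$ throughout the conflict region, which yields a single sign change at $\min(b,d)$.

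Next I would handle the case $c = b+1$. Because $c \le d$ we have $b < d$, and the sign bookkeeping gives $\Delta w_j \ge 0$ for $j \le b-1$ and $\Delta w_j \le 0$ for $j \ge b+1$ (the interval $[b+1,d-1]$ sits in the flat top of $v$, and beyond $d$ both differences are nonpositive), leaving only the single index $j=b$ undetermined, where $u$ has just begun to decrease while $v$ takes its last nondecreasing step. Whatever the sign of $\Delta w_b$, the overall pattern remains a block of nonnegative differences followed by a block of nonpositive ones, the switch occurring at $b$ or at $b+1$, so $w$ is again nondecreasing and then nonincreasing.

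In both cases $w$ is a sum of two sequences with nonincreasing tails, hence regular, and it is nondecreasing up to some index and nonincreasing thereafter; by Corollary \ref{lem.sign.chg} this makes $w$ unimodal (the degenerate constant case being unimodal by convention). The main obstacle is precisely the conflict region of the first case, and the crux is the observation that the two clauses of the hypothesis are tailored so that any interval on which $u$ and $v$ have opposing monotonicity is absorbed into the flat mode of one of them (or collapses to a single ambiguous index). I expect the only routine subtlety to be the boundary bookkeeping when a flat top degenerates, i.e.\ $a=b$ or $c=d$; a short check shows these degeneracies are consistent with the case hypotheses (for instance $a=b$ together with $a\le c\le d$ rules out $d<b$) and cause no trouble.
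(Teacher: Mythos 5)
Your proposal is correct and takes essentially the same route as the paper's proof: both split into the cases $a \le c \le b$ and $c = b+1$, and both hinge on the observation that any region where $u$ and $v$ have opposing monotonicity lies inside the flat mode interval of one of them, so the sum inherits the other's behaviour there. Recasting this in terms of first differences (and the ambiguous single index $j=b$ when $c=b+1$) is only a notational variation on the paper's monotonicity-interval analysis, though your bookkeeping is, if anything, a little more careful.
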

	\begin{proof}
		Without loss of generality, assume  the index set for both $u$ and $v$ is the set of positive integers. By definition of unimodality and the theorem's hypothesis, $u$ is nondecreasing on $[1, b]$ and nonincreasing on $[b+1, \infty)$. Similarly, $v$ is nondecreasing on $[1, c]$ and nonincreasing on $[d+1,\infty)$. Then $u+v$ is nondecreasing on $[1,\min(b,c)]=[1,c]$ and nonincreasing on $[d, \infty)$. For $u+v$ to be unimodal, it must be monotone on  $[c, d]$. If  $[a,b]$ and  $[c,d]$, coincide, then the proof is complete as $u$ and $v$ are constant on $[c, d] = [a, b]$. Since $v$ is constant on $[c, d]$,   it follows that $u+v$  is constant on $[c, \min(b,d)]$. Because it inherits the monotonicity of $u$, it is  nonincreasing in $[\min(b,d), d]$. On the final segment, $[d, \infty)$, both $u$ and $v$ are nonincreasing. In summary, $u+v$ is nondecreasing on $[a,c]$, has a mode interval $[c, \min(b,d)]$, and is nonincreasing on $[\min(b,d)+1, \infty)$. Thus, $u+v$ is unimodal. 
		
		For part \rm(ii),  given the  two mode intervals, $u$ and $v$ are nondecreasing on $[a,b]$ and $[a,d]$, respectively. Hence,  $u+v$ is nonincreasing on $[a,b]$. Also, $u+v$ is nonincreasing on $[d, \infty)$ as both $u$ and $v$ are nonincreasing. Since $v$ is constant on $[c, d]$, $u+v$ inherits the monotonicity pattern of $u$. Thus, it is nonincreasing on $[c,d]$. Depending on the values of  $u_b + v_b$ and $u_c + v_c$, $u+v$ may have a single mode at $b$ or $c=b+1$  or a double mode at $b$ and $c$. This completes the proof.
	\end{proof}

	\begin{thm}\label{cor.closure.sum}
		Consider the transformations 
		\begin{alignat}{2}
			\mathcal{L}_{m,n} &\coloneqq \mathcal{L}^m+ \cdots + \mathcal{L}^{m+n},\label{def.trans.sum}\\
			\mathcal{E}_{m, n} &\coloneqq \mathcal{J}^{2m}+ \mathcal{J}^{2m+2}+\cdots + \mathcal{J}^{2m+2n},\label{def.trans.sum.even}\\
			\mathcal{O}_{m, n}&\coloneqq \mathcal{J}^{2m+1}+ \mathcal{J}^{2m+3}+\cdots + \mathcal{J}^{2m+2n+1},\label{def.trans.sum.odd}
		\end{alignat} 
		where  $\mathcal{L}$ and $\mathcal{J}$ are  UP and UR transformations respectively and $m,n \in \mn$. For any unimodal real sequence $u$ that is either 1)  monotone or 2)  nonmonotone and its (first) mode coincides with the second-to-last term, the sequences $\mathcal{L}_{m,n}(u)$, $\mathcal{E}_{m,n}(u)$  and $-\mathcal{O}_{m,n}(u)$ are unimodal.
	\end{thm}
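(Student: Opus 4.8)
The plan is to reduce the $\mathcal{E}_{m,n}$ and $\mathcal{O}_{m,n}$ statements to the single statement about $\mathcal{L}_{m,n}$, and then to prove the latter by controlling the mode locations of the iterates $\mathcal{L}^{m+k}u$.

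For the reduction I use Lemma \ref{lem.mpp.properties}. As $\mathcal{J}$ is UR, its square $\mathcal{J}^2=\mathcal{J}\circ\mathcal{J}$ is UP by part \rm(ii); writing $\mathcal{L}\coloneqq\mathcal{J}^2$ we get $\mathcal{E}_{m,n}=\sum_{k=0}^n(\mathcal{J}^2)^{m+k}=\mathcal{L}_{m,n}$ for a UP kernel $\mathcal{L}$, so $\mathcal{E}_{m,n}(u)$ is unimodal once the $\mathcal{L}_{m,n}$ claim is proved. For the odd sum I factor $\mathcal{O}_{m,n}=\mathcal{J}\,\mathcal{E}_{m,n}$; since $\mathcal{E}_{m,n}(u)$ is then unimodal and $-\mathcal{J}$ is UP (the additive inverse of a UR kernel, by part \rm(i)), the sequence $-\mathcal{O}_{m,n}(u)=(-\mathcal{J})\bigl(\mathcal{E}_{m,n}(u)\bigr)$ is unimodal. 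This settles the $\mathcal{E}$ and $\mathcal{O}$ assertions.

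It remains to prove that $\mathcal{L}_{m,n}(u)=\sum_{k=0}^n w_k$ is unimodal, where $w_k\coloneqq\mathcal{L}^{m+k}u$. Each $\mathcal{L}^{m+k}$ is UP by Lemma \ref{lem.mpp.properties}\rm(ii), so every $w_k$ is unimodal; and since $w_{k+1}=\mathcal{L}w_k$, Lemma \ref{lem.modelocation} shows the mode interval of $w_{k+1}$ lies at or to the right of that of $w_k$. Thus the mode intervals of $w_0,\dots,w_n$ form a chain that only moves rightward. In case \rm(2), $u$ has its single mode at the penultimate index $N-1$, so this rightward chain confines every mode of $w_k$ to $\{N-1,N\}$; consequently each $w_k$ is nondecreasing on $[1,N-1]$, hence so is $\mathcal{L}_{m,n}(u)$, and any sequence on $[1,N]$ that is nondecreasing on $[1,N-1]$ is automatically unimodal. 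In case \rm(1), $u$ is monotone, so by the variation-diminishing property (Theorem \ref{def.vd}\rm(i), as $\mathcal{L}$ is $\SR{3}$ and hence $\SR{2}$) every $w_k$ is monotone, and once they are seen to share a common direction their sum is monotone and therefore unimodal.

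The main obstacle is this last point, namely excluding a direction switch among the monotone iterates in case \rm(1) (equivalently, ensuring consecutive modes never separate by more than one index, which is what Theorem \ref{thm.closure.sum} would need). Here Lemma \ref{lem.modelocation} is decisive: a monotone sequence carries its mode at one endpoint of the index set, and because the mode chain is nondecreasing, once a mode reaches the right endpoint it remains there; for an infinite sequence, monotone unimodality moreover forces the nonincreasing direction, since an increasing infinite sequence is not unimodal. Either way no switch can occur within $w_0,\dots,w_n$, so the summands are co-monotone and their sum is monotone; the finite case reduces to the infinite one by appending a constant tail, as in the finite-sequence corollary of Theorem \ref{thm.multimodal.maximum}. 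If one prefers to argue through Theorem \ref{thm.closure.sum}, one adds the summands in order of nondecreasing mode, so that at each step the incoming summand's mode lies at or just beyond that of the current partial sum, and the hypotheses of that theorem are met.
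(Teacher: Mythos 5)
Your proposal follows the paper's own route for everything except the monotone case: the reduction of $\mathcal{E}_{m,n}$ to $\mathcal{L}_{m,n}$ via the UP transformation $\mathcal{J}^2$ (Lemma \ref{lem.mpp.properties}), and the core argument that each summand $\mathcal{L}^{m+k}u$ is unimodal with mode interval confined to the last two indices by Lemma \ref{lem.modelocation}, are exactly the paper's steps; your observation that a sequence which is nondecreasing on $[1,N-1]$ is automatically unimodal simply replaces the paper's appeal to Theorem \ref{thm.closure.sum}. Your treatment of the odd sum is in fact cleaner than the paper's: you factor $-\mathcal{O}_{m,n}(u)=(-\mathcal{J})\bigl(\mathcal{E}_{m,n}(u)\bigr)$ and apply the single UP transformation $-\mathcal{J}$ to the already-established unimodal sequence $\mathcal{E}_{m,n}(u)$, whereas the paper treats each $-\mathcal{J}^{2m+2k+1}$ as a separate UP summand and then invokes ``the result for $\mathcal{L}_{m,n}$'', which does not literally apply because those operators are not consecutive powers of one UP transformation.

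Where you genuinely depart from the paper is case (1), and there your argument has a gap. The paper merely asserts that a UP transformation sends a monotone sequence to a monotone sequence ``in the same direction''; you correctly identify this as the crux and try to derive it from Lemma \ref{lem.modelocation}. But that lemma only says the mode interval cannot move left. For a nondecreasing input (mode interval ending at $N$) it does pin the image's mode to the right end, as you say; for a nonincreasing input (mode interval beginning at index $1$) it does \emph{not} forbid the image from being nondecreasing, since the mode may jump from the left end to the right end in one application. Such a switch is actually realizable by a UP kernel: $K(i,j)=a_i$ with $a=(1,2,3)$ is totally positive of all orders, hence UP, and maps the nonincreasing sequence $(1,0,0)$ to the nondecreasing sequence $(1,2,3)$, consistently with Lemma \ref{lem.modelocation}. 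So ``no switch can occur'' is false as stated; what your argument yields is ``at most one switch'', after which the summands split into a nonincreasing block and a nondecreasing block, and neither co-monotonicity nor your fallback through Theorem \ref{thm.closure.sum} closes the argument (its hypothesis $a\le c\le b$ or $c=b+1$ fails when one mode sits at index $1$ and the other at index $N$). Your two auxiliary patches do not repair this: the infinite-sequence argument overlooks nondecreasing sequences that attain their maximum, which \emph{are} unimodal under Definition \ref{def.multimodel}, so the nonincreasing direction is not forced; and the reduction of the finite case to the infinite one by appending a constant tail is unavailable here, because the kernel is defined on the original finite index sets and does not extend along with the sequence. In fairness, the paper's proof of case (1) has exactly the same hole --- it asserts direction preservation with no justification --- so your attempt is no worse and is more candid about where the difficulty lies; but as written, neither argument proves the monotone case.
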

	\begin{proof}
		Each $\mathcal{L}^m, \ldots, \mathcal{L}^{m+n}$ is a UP transformation due to the closure property of UP transformations under composition in view of Lemma \ref {lem.mpp.properties}. If  $u$  is monotone, the result is trivial as each UP transformation produces a monotone sequence in the same direction, and their sum  is also monotone. For nonmonotone sequence $u$, we proceed inductively, using assertion \rm(ii) of Theorem \ref{thm.closure.sum} and Lemma \ref{lem.modelocation} as follows: If $u$, defined on $I=[a,b]$ where $a,b \in \mn$, has its mode at $b-1$ (and possibly $b$), then $v_k =\mathcal{L}^{k}(u)$, where $k  \in \{m, m+1, \ldots, m+n\}$, is unimodal. By Lemma \ref{lem.modelocation}, the mode of each $v_k$  is at index $b-1$ or $b$. Thus, $\mathcal{L}_{m,n}(u)=\sum_{k=m}^{m+n} v_k$  has its mode at  $b-1$ or $b$, and is therefore unimodal. 
		
		Turning to  transformation $\mathcal{E}_{m, n} $, recall that every UR transformation $\mathcal{J}$ raised to an even power is UP by  Lemma \ref{lem.mpp.properties}. The assertion follows directly from the previous result for transformation $\mathcal{L}_{m,n}$.

		Finally, consider $J = -\mathcal{O}$. In view of properties \rm(ii)-\rm(iii) in Lemma \ref{lem.mpp.properties}, raising  a UR transformation $\mathcal{J}$ to an odd power yields a UR transformation. By property \rm(i) of the same lemma, each of the transformations $-\mathcal{J}^{2m+2k+1}$, where $k=0,1,\ldots,n$, is UP.  Applying the result for transformation  $\mathcal{L}_{m,n}$ once more, we deduce that $J(u) = -\mathcal{O}(u)$ is unimodal.
	\end{proof}
	For a given sequence $u$, define the following transformations
	\begin{itemize}
		\item The zero transformation, denoted $O$, where $O(u) = (0,0,\ldots)$,
		\item The identity transformation, denoted  $I$, where $I(u) = u$.
	\end{itemize}

	\begin{lemma}\label{prop.geometric.transformations}
		Let $\mathcal{L}$ be a UP transformation and $\mathcal{J}$  be a UP or UR transformation. Suppose  both transformations are linear with a spectral radius less than 1.  For any unimodal sequence $u$ satisfying conditions 1 or 2 of Theorem \ref{cor.closure.sum}, the sequences
			$$\displaystyle \left(\frac{1}{I-\mathcal{L}}\right)(u), ~ \displaystyle\left(\frac{1}{I+\mathcal{J}}\right)(u) \text {  and } \displaystyle\left(\frac{1}{I-\mathcal{J}^2}\right)(u),$$
			are unimodal.
		\end{lemma}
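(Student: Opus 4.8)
The plan is to realise each resolvent as a convergent Neumann series and reduce the claim to the finite-sum statement of Theorem \ref{cor.closure.sum} together with a limiting argument. Since $\mathcal{L}$ and $\mathcal{J}$ are linear with spectral radius below $1$, the operators $I-\mathcal{L}$, $I+\mathcal{J}$ and $I-\mathcal{J}^{2}$ are invertible and
\[
\frac{1}{I-\mathcal{L}}=\sum_{k\ge 0}\mathcal{L}^{k},\qquad \frac{1}{I+\mathcal{J}}=\sum_{k\ge 0}(-1)^{k}\mathcal{J}^{k},\qquad \frac{1}{I-\mathcal{J}^{2}}=\sum_{k\ge 0}\mathcal{J}^{2k},
\]
each series converging entrywise when applied to $u$ (the radius of $\mathcal{J}^{2}$ is the square of that of $\mathcal{J}$, still below $1$). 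So it suffices to show that the partial sums applied to $u$ are unimodal and that unimodality survives the passage to the limit.

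First I would check that every summand is a UP transformation. The identity $I=\mathcal{L}^{0}$ is trivially UP, and by assertion (ii) of Lemma \ref{lem.mpp.properties} each power $\mathcal{L}^{k}$ is UP; likewise $\mathcal{J}^{2k}$ is UP whether $\mathcal{J}$ is UP or UR, since an even power of a UR transformation is UP, so $\frac{1}{I-\mathcal{J}^{2}}$ reduces to the first series with $\mathcal{L}=\mathcal{J}^{2}$. For the alternating series the key observation is that when $\mathcal{J}$ is UR the operator $(-1)^{k}\mathcal{J}^{k}$ is UP for \emph{every} $k$: for even $k$ both factors are sign-preserving, while for odd $k$ the power $\mathcal{J}^{k}$ is UR and the sign flip turns it into a UP transformation by assertions (i) and (iii) of the same lemma. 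Consequently the partial sum $\sum_{k=0}^{N}(-1)^{k}\mathcal{J}^{k}$ splits into its even part, a sum of the $\mathcal{E}$-type, and the negated odd part, a sum of the $-\mathcal{O}$-type, both unimodal by Theorem \ref{cor.closure.sum}.

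With each summand UP, the partial sums are finite sums of exactly the form treated in Theorem \ref{cor.closure.sum} (absorbing the $k=0$ term through the UP identity, which the proof of that theorem accommodates), hence unimodal for any $u$ meeting condition $1$ or $2$. The crucial extra input I would extract is Lemma \ref{lem.modelocation}: since $u$ is either monotone or finite with its mode at the penultimate index $b-1$, every UP image $\mathcal{L}^{k}(u)$, and likewise each admissible summand, has its mode confined to the two-point window $\{b-1,b\}$. Thus all partial sums are nondecreasing on the common initial block up to index $b-1$, carry at most one further term, and their modes never drift.

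The main obstacle is the passage to the infinite sum, and the mode confinement is exactly what makes it work. Writing $w^{(N)}$ for the $N$th partial sum and $w=\lim_{N}w^{(N)}$ entrywise, the inequalities $w^{(N)}_{j}\le w^{(N)}_{j+1}$ for $j<b-1$ are preserved under the pointwise limit, so $w$ is nondecreasing up to $b-1$; the single remaining comparison then leaves $w$ monotone increasing and then (at most) one step down, i.e.\ unimodal. In the monotone case the argument is simpler still, as every partial sum is monotone in one fixed direction and the limit inherits that monotonicity. Applying this to the three series yields the unimodality of $\left(\frac{1}{I-\mathcal{L}}\right)(u)$, $\left(\frac{1}{I+\mathcal{J}}\right)(u)$ and $\left(\frac{1}{I-\mathcal{J}^{2}}\right)(u)$. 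The delicate point to argue carefully is that a pointwise limit of unimodal sequences can fail to be unimodal when the modes are free to move, so the confinement to $\{b-1,b\}$ furnished by Lemma \ref{lem.modelocation} is indispensable rather than cosmetic.
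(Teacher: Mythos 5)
Your proposal is correct and follows essentially the same route as the paper's proof: expand each resolvent as a Neumann series, show the summands are UP via Lemma \ref{lem.mpp.properties}, invoke Theorem \ref{cor.closure.sum} for the partial sums, and pass to the pointwise limit guaranteed by the spectral-radius hypothesis. If anything, your write-up is more careful than the paper's, which proves only the $\bigl(I-\mathcal{J}^2\bigr)^{-1}$ case and settles the limit step with a one-line appeal to induction, whereas you make explicit the mode-confinement argument (via Lemma \ref{lem.modelocation}) that is exactly what prevents the modes of the partial sums from drifting and thus lets unimodality survive the passage to the limit.
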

		\begin{proof}
			We only prove the unimodality of the last sequence; the proofs for the other two are identical. From Theorem \ref{cor.closure.sum},  the sequence $\mathcal{E}_{0,n}(u)$, defined by (\ref{def.trans.sum.even}) with $m=0$, is unimodal for every $n=1,2,\ldots$. Since $\mathcal{J}$ is  a linear UP or UR transformation which has a spectral radius less than 1,  $\lim_{n \to \infty} \mathcal{J}^n = O$, and the series $\sum_{k \ge 1}  \mathcal{J}^k= \lim_{n \to \infty}\mathcal{E}_{0, n}$ converges  pointwise to $(1-\mathcal{J}^{2})^{-1}$. Therefore, 
			\begin{alignat*}{2}
				\frac{1}{I- \mathcal{J}^{2}}(u) = u +  \mathcal{J}^2(u) +   \mathcal{J}^4(u) + \cdots
			\end{alignat*}
			is also unimodal. This can be established by induction  using Theorem \ref{thm.closure.sum}.
		\end{proof}
		In the following section, we investigate sign-regular preservers and sign-regular reversers of  $m$-modality (resp. $m$-convexity).
		\section{Preservers and Reversers of $m$-modality/$m$-convexity}\label{sec4}
		
		Throughout this section, $I=J \coloneqq \mn \cup \{0\}$ represent the index sets of the sequences considered. Let $m > 1$, we introduce a second linear transformation $\eL_m$, which
		\begin{itemize}
			\item maps a real sequence to a real sequence as in (\ref{def.kernel.sequence-sequence}).
			\item maps a polynomial of exactly degree $k \in [m-1]$ to another polynomial of degree $k$. 
		\end{itemize}
		That is, given a real sequence $u \coloneqq (u_k)_{k \in I}$ and a  an arbitrary polynomial  of degree $k=1, \ldots, m-1$, 
		\begin{equation*}
			P_k \coloneqq P_k(x) = a_0x^k+a_1x^{k-1}+\cdots+a_{k}, ~a_0 \ne 0
		\end{equation*}
		we have:
		\begin{alignat}{2}\label{def.Mtransform}
			v_n &\coloneqq \eL_m u = \sum_ {j \ge 0} K(n, j) u_j,   &\qquad n \in J\\
			Q_k(n)&\coloneqq \eL_m P_k= \sum_ { n \ge 0} K(x, n)P_k(n) &\nonumberj
			&= b_0n^k+b_1n^{k-1}+\cdots+b_{k},  &\qquad ~b_0 \ne 0 \text{ and } n \in J,
		\end{alignat}
		where $v$ is assumed to converge uniformly on $J$.
		
		\subsection{The relation between higher-order convexity and $m$-modality}\label{sec4.1}
		
		As Karlin observed, sign-regular kernels do not automatically preserve $m$-convexity and $m$-concavity.  This section complements his work  (see Chapter 6 or p.24 in \cite{KarlinTPBook} and Section 7 in \cite{KarlinProschan})  by investigating $m$-convexity reversing and preserving transformations using kernels that are not totally positive. The following theorem will address this more precisely.
		\begin{thm}\label{thm.mconvexity.preserver}
			Let $\eL_{m}$ be the transformation given by (\ref{def.Mtransform}).  If $K$ is $\TP{m+1}$ or $-K$ is $\TN{m+1}$,  then $\eL_m$ preserves  $m$-convexity (resp. $m$-concavity). Conversely, if $-K$ is $\TP{+1}$ or  $K$ is $\TN{m+1}$, then $\eL_m$ reverses $m$-convexity (resp. $m$-concavity).
		\end{thm}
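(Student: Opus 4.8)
The plan is to reduce the statement to Karlin's variation-diminishing theorem (Theorem~\ref{def.vd}) via the polynomial-subtraction characterisation of $m$-convexity in Definition~\ref{def.convex}. First I would exploit the defining property of $\eL_m$ in (\ref{def.Mtransform}): since $\eL_m$ sends every polynomial of degree $k \le m-1$ to a polynomial of the same degree (in particular constants to constants, with nonvanishing leading coefficient), it restricts to a linear, degree-preserving map on the $m$-dimensional space $\Pi_{m-1}$ of polynomials of degree at most $m-1$. In the monomial basis ordered by degree this map is triangular with nonzero diagonal, hence bijective. Consequently, for every $q \in \Pi_{m-1}$ there is a unique $p \in \Pi_{m-1}$ with $\eL_m p = q$, and by linearity $\eL_m u - q = \eL_m(u-p)$.

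Next, I would fix an $m$-convex sequence $u$, set $v \coloneqq \eL_m u$, and take an arbitrary $q \in \Pi_{m-1}$, writing $v - q = \eL_m(u-p)$ with $p = \eL_m^{-1}q$. By Definition~\ref{def.convex} applied to $u$, the sequence $u-p$ has at most $m$ sign changes, and exactly $m$ only with the alternating pattern $+-+\cdots$. Because $K$ is $\SR{m+1}$, part~(i) of Theorem~\ref{def.vd} yields $S(v-q) \le S(u-p) \le m$, which already secures the first requirement of $m$-convexity for $v$. The only delicate case is $S(v-q)=m$; then necessarily $S(u-p)=m$, so $u-p$ carries the pattern $+-+\cdots$, and part~(ii) of Theorem~\ref{def.vd} applies at order $k=m \le (m+1)-1$. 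Thus the pattern of $v-q$ coincides with that of $u-p$ when $\varepsilon_m \varepsilon_{m+1}=1$ and is reversed when $\varepsilon_m \varepsilon_{m+1}=-1$. As $q$ was arbitrary, this decides whether $v$ is $m$-convex or $m$-concave.

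It then remains to read off the signature $(\varepsilon_1,\dots,\varepsilon_{m+1})$ from each hypothesis. Passing from $K$ to $-K$ multiplies every $k\times k$ minor by $(-1)^k$; from this and the definitions of $\TP{}$ and $\TN{}$ I would tabulate $\varepsilon_k$ for the cases ``$K$ is $\TP{m+1}$'' and ``$-K$ is $\TN{m+1}$'', and verify that the product $\varepsilon_m\varepsilon_{m+1}$ equals $+1$ in each, which forces preservation by the previous paragraph. The $m$-concave half of the preservation claim is then immediate by linearity, applying the $m$-convex case to $-u$. Finally, the entire reversal half follows at no extra cost by applying the preservation half to the transformation $-\eL_m$ with kernel $-K$: replacing $K$ by $-K$ sends the two preserving hypotheses (``$K$ is $\TP{m+1}$'', or ``$-K$ is $\TN{m+1}$'') exactly to the two reversing hypotheses (``$-K$ is $\TP{m+1}$'', or ``$K$ is $\TN{m+1}$''), while turning ``preserves'' into ``reverses''.

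I expect the main obstacle to be the sign bookkeeping in the third step: one must track carefully how $\TP{}$/$\TN{}$ of $K$ versus $-K$ propagate through the $(-1)^k$ factor into the products $\varepsilon_m\varepsilon_{m+1}$, and confirm that the degree-preservation of $\eL_m$ does not covertly flip the orientation of the alternating pattern. A secondary point is that the refinement in Theorem~\ref{def.vd}(ii) is needed only in the borderline case $S(v-q)=m$: every strictly subcritical case $S(v-q)<m$ imposes no pattern constraint and is handled by variation diminution alone, which is precisely why order $m+1$ suffices here rather than the larger order required for the $m$-modality analogue.
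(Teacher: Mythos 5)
Your route is the paper's own route: subtract polynomials, use linearity to write $\eL_m u - q = \eL_m(u-p)$, invoke the variation-diminishing Theorem~\ref{def.vd}, and obtain the reversal half by passing from $K$ to $-K$; your observation that $\eL_m$ restricted to polynomials of degree at most $m-1$ is triangular with nonzero diagonal, hence bijective, is a genuine improvement on the paper, which merely asserts that $Q_{m-1}$ is ``arbitrary''. However, the sign bookkeeping you defer to the end is exactly where your argument fails, and it cannot be made to come out as you claim. If $-K$ is $\TN{m+1}$, then the order-$k$ minors of $-K$ are nonpositive and equal $(-1)^k$ times those of $K$, so the signature of $K$ is $\varepsilon_k=(-1)^{k+1}$ and hence $\varepsilon_m\varepsilon_{m+1}=-1$, not $+1$. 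Theorem~\ref{def.vd}(ii) then says that when $S(v-q)=S(u-p)=m$ the pattern of $v-q$ is the \emph{negation} of $+-+\cdots$, which by Definition~\ref{def.convex} certifies that $v$ is $m$-concave, not $m$-convex. So your machinery, executed honestly, proves preservation only under ``$K$ is $\TP{m+1}$'' (and, via your negation trick, reversal only under ``$-K$ is $\TP{m+1}$''); for the two $\TN{}$ hypotheses it yields precisely the opposite pairing to the one in the statement, since $K$ being $\TN{m+1}$ gives $\varepsilon_k\equiv -1$, hence $\varepsilon_m\varepsilon_{m+1}=+1$ and preservation, while ``$-K$ is $\TN{m+1}$'' gives reversal.

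You should also know that this gap is not one you can patch, because the discrepancy sits in the paper rather than in your method. The paper's proof of the ``$-K$ is $\TN{m+1}$'' case deduces from $\varepsilon_i\varepsilon_{i+1}=-1$ that ``$Q_{k-1}-v$ has the opposite sign pattern to $u-P_{k-1}$'' and concludes that $v-Q_{m-1}$ and $u-P_{m-1}$ have identical patterns; but Theorem~\ref{def.vd}(ii) applies to the transformed sequence $v-Q_{k-1}=\eL_m(u-P_{k-1})$ itself, not to its additive inverse, so that extra sign flip is unjustified. A two-point check confirms your arithmetic rather than the paper's: for $m=1$ let $K$ be the matrix with rows $(1,2)$ and $(2,1)$, so that $-K$ is $\TN{2}$ and constants map to constants; then the nondecreasing (hence $1$-convex) sequence $(1,2)$ is sent to the decreasing sequence $(5,4)$ --- reversal, not preservation. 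The paper's own numerical example points the same way: its totally negative matrix $A$ is observed to map $3$-convex sequences to $3$-convex sequences, i.e.\ a $\TN{}$ kernel preserving convexity. In short, your proposal is the paper's argument plus one worthwhile lemma, but the deferred verification fails as stated, and what your argument actually establishes is the theorem with the two $\TN{}$ hypotheses interchanged between the preservation and reversal conclusions.
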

		\begin{proof}	
			The proof for the totally positive case of the first part is due to Karlin (see  p.344 in \cite{KarlinConvexity} or p.732 \cite{KarlinProschan}) and uses the identity :
			\begin{equation}\label{eqn.polynomial}
				v_n-Q_{m-1}(n) = \sum_{j \ge 0} K(n, j) \left\{u_j - P_{m-1}(j)\right\},
			\end{equation}
			where  $P_{m-1} \coloneqq  P_{m-1}(x)= a_0x^{m-1}+a_1x^{m-2}+\cdots+a_{m-2},$ and $a_0 > 0$.  Since $P_{m-1}$ is a polynomial of degree $m-1$ with a positive leading coefficient, and   
			$u$ is $m$-convex, the sequence $u-P_{m-1} \coloneqq (u_1 - P_{m-1}(1), u_2 - P_{m-1}(2),\ldots)$, changes its sign at most $m$ times. Therefore,  $v-Q_{m-1}$  has at most $m$ sign changes. By the VD property of $K$, which is $\TP{m+1}$, if $v-Q_{m-1}$ exhibits $m$ sign changes, they occur in the same order as the sign changes in $u-P_{m-1}$, namely $+-+\cdots$. Thus, $v$ is convex of order $m$, as $Q_{m-1}(n)$ is an arbitrary polynomial of degree $m-1$. When $-K$ is $\TN{m+1}$, the VD property once more ensures that $v-Q_{m-1}$ has at most $m$ sign changes. The signs of  minors  of $K$ satisfy $\varepsilon_{i}\varepsilon_{i+1}=-1$ for all $i \in [m]$. Hence, $Q_{k-1}-v$ has the opposite sign pattern to $u-P_{k-1}$ for all $k \in [m]$. This, in turn, implies that the sign patterns of $v-Q_{m-1}$ and $u-P_{m-1}$ are identical. Therefore, $v$ is $m$-convex.

			For the second part, where $ -K$ is $\TP{m}$, if $u$ is $m$-convex, then $-\eL_m(u)$ is $m$-convex  by the first part of this theorem. Its additive inverse, $\eL_m(u)$ is naturally $m$-concave. Thus, $\eL_m$ is $m$-convexity reversing.
		\end{proof}
		
		\begin{rem}
			$ $
			\begin{itemize}
				
				\item To ensure that $a_0b_0 > 0$, where $a_0$ and $b_0$ are the leading coefficients of polynomials $P_{m-1}$ and $Q_{m-1}$ respectively,  it is sufficient (though not necessary) to require that $\varepsilon_1 = 1$, hence the need for $K$ to be nonnegative or $\TP{1}$. 
				
				\item In the proof, the case where $g(x)-Q_{m-1}(x)$ displays less than $m$ sign changes is immaterial. The sole requirement is to ensure that at most $m$ sign changes occur, and when these changes happen, they follow the pattern $+-+\cdots$ for the convex case and the reverse pattern otherwise. 
			\end{itemize}	
		\end{rem}
		
		A generalisation of Theorem \ref{thm.unimodality.preserving} to $m$-modal sequences  becomes clear by relying on the preceding proof and Definition \ref{def.mmodal}. 
		\begin{thm}\label{thm.multimodality.preserver}
			Let $\eL_{m}$ be the transformation defined by (\ref{def.Mtransform}).  If $K$ is $\TP{2m+1}$ or $-K$ is $\TN{2m+1}$,  then $\eL_m$ preserves  $m$-modality. Conversely, if $-K$ is $\TP{2m+1}$ or  $K$ is $\TN{2m+1}$, then $\eL_m$ reverses $m$-modality.
		\end{thm}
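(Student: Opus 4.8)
The plan is to lift the proof of Theorem~\ref{thm.mconvexity.preserver} from the $m$-convexity criterion of Definition~\ref{def.convex} to the $m$-modality criterion of Definition~\ref{def.mmodal}, raising the order of sign-regularity from $m+1$ to $2m+1$. First I would record that the reversal half is a formal consequence of the preservation half: if $-K$ is $\TP{2m+1}$ or $K$ is $\TN{2m+1}$, then the kernel $-K$ satisfies exactly the two preservation hypotheses (namely $-K$ is $\TP{2m+1}$, or $-(-K)=K$ is $\TN{2m+1}$), so $-\eL_m$ preserves $m$-modality and hence $\eL_m$ reverses it by part~(i) of Lemma~\ref{lem.mpp.properties}. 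Thus the whole argument concentrates on the preservation statement.

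For preservation, fix an $m$-modal sequence $u$ and set $v:=\eL_m u$. By Definition~\ref{def.mmodal} there is an $l\in\{0,1,2\}$ and a polynomial $P$ of degree $2m-1-l$ for which $u-P$ has $2m-l$ sign changes arranged in the alternating block pattern carrying exactly $m$ plus signs. Writing $Q:=\eL_m P$, which again has degree $2m-1-l$ (here one uses that $\eL_m$ preserves the degree of polynomials up to degree $2m-1$, the analogue for the modal range of the degree-preservation built into~(\ref{def.Mtransform})), linearity gives the key identity
\begin{equation*}
v-Q=\eL_m(u-P).
\end{equation*}
Because $S(u-P)=2m-l\le 2m=(2m+1)-1$, the variation-diminishing part of Theorem~\ref{def.vd} yields $S(v-Q)\le 2m-l$, and when equality holds its second part fixes the sign pattern of $v-Q$ from that of $u-P$ through the products $\varepsilon_k\varepsilon_{k+1}$.

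If $K$ is $\TP{2m+1}$ then every $\varepsilon_k=1$, so $v-Q$ inherits the pattern of $u-P$ verbatim and therefore still displays $m$ plus signs; by Definition~\ref{def.mmodal} this makes $v$ $m$-modal. If instead $-K$ is $\TN{2m+1}$, a $k\times k$ minor of $K$ is $(-1)^k$ times the corresponding nonpositive minor of $-K$, whence $\varepsilon_k=(-1)^{k+1}$, so $\varepsilon_1=1$ and $\varepsilon_k\varepsilon_{k+1}=-1$ for all $k$; exactly as in the $\TN{m+1}$ case of Theorem~\ref{thm.mconvexity.preserver}, the level-by-level alternation makes $Q-v$ carry the reverse pattern of $u-P$, so that $v-Q$ reproduces the pattern of $u-P$ together with its $m$ plus signs, and $v$ is again $m$-modal. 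The subcases $S(v-Q)<2m-l$ are harmless, just as in the remark following Theorem~\ref{thm.mconvexity.preserver}: the VD property can only coalesce modes, never manufacture an extra plus sign.

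The step I expect to demand the most care is the sign bookkeeping in the $\TN{2m+1}$-inverse (and, dually, the $\TN{2m+1}$) case. One must follow simultaneously the minor-sign alternation $\varepsilon_k\varepsilon_{k+1}=-1$ and the sign of the leading coefficient of $Q=\eL_m P$---kept positive by $\varepsilon_1=1$---to be sure that the transformed alternating pattern carries precisely $m$ plus signs rather than $m-1$ or $m+1$, since it is this count, and not merely the number of sign changes, that Definition~\ref{def.mmodal} pins the modality to. A secondary point to verify is that the degree-preservation of $\eL_m$ in~(\ref{def.Mtransform}) genuinely extends to all degrees up to $2m-1$, as the three polynomials $p_1,p_2,p_3$ in Definition~\ref{def.mmodal} require.
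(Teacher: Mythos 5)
Your proposal follows essentially the same route as the paper: the paper's proof is itself only a two-line instruction to build identities analogous to (\ref{eqn.polynomial}) with polynomials of degree $2m-1$, $2m-2$, $2m-3$ and then adapt the sign-tracking of Theorem \ref{thm.mconvexity.preserver}, which is precisely what you carry out, including the same treatment of the $\TN{}$ case via $\varepsilon_k\varepsilon_{k+1}=-1$ and the derivation of the reversal half from the preservation half applied to $-K$ together with Lemma \ref{lem.mpp.properties}(i). Your closing caveats (the count of exactly $m$ plus signs, and extending the polynomial-degree preservation in (\ref{def.Mtransform}) up to degree $2m-1$) are exactly the points the paper leaves implicit, so they are appropriate rather than gaps.
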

		\begin{proof}	
			Construct, one at a time,  identities similar to equation (\ref{eqn.polynomial}) using polynomials of degree $2m-1$, $2m-2$ and $2m-3$, respectively. Then, adapt the proof of Theorem \ref{thm.mconvexity.preserver}, carefully tracking the sign changes in each case.			 	
		\end{proof}

		Definitions \ref{def.convex} and \ref{def.mmodal} suggest the following relationship between $m$-convex, $m$-concave sequences and multimodal sequences.
		\begin{thm}
			Let $m \ge 1$ and suppose $u$ is a regular sequence.  $u$ is $m$-modal iff $u$ is either $2m$-concave, $(2m-1)$-concave,  $(2m-1)$-convex or $(2m-2)$-convex.
		\end{thm}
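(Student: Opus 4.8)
The plan is to prove both directions by passing through the common language of polynomial sign changes and then matching patterns. The engine is Definition~\ref{def.convex} read in its sign-change form: $u$ is $n$-convex exactly when $u-p$ has at most $n$ sign changes for every polynomial $p$ of degree $n-1$, with the pattern $+-+\cdots$ whenever the bound $n$ is attained, and $u$ is $n$-concave when the same holds with the reversed pattern $-+-\cdots$ (the equivalence follows by replacing $p$ with $-p$, which flips every sign but preserves the count). The four target orders $2m$, $2m-1$, $2m-2$ have associated polynomial degrees $2m-1$, $2m-2$, $2m-3$, which coincide exactly with the degrees of $p_1,p_2,p_3$ in the second characterisation of $m$-modality, Definition~\ref{def.mmodal}. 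The first step is therefore a direct count, in the spirit of Theorem~\ref{thm.sign}: a sign string with $k$ changes has $k+1$ maximal blocks, and it carries exactly $m$ plus-blocks precisely in the four configurations $-+-\cdots+-$ (for $k=2m$), $-+\cdots+$ and $+-\cdots-$ (for $k=2m-1$), and $+-\cdots+$ (for $k=2m-2$); the two discarded configurations at $k=2m$ and $k=2m-2$ that begin and end with the same sign carry $m+1$ and $m-1$ plus-blocks and so are excluded by the ``$m$ plus signs'' clause.

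With this dictionary fixed, both implications become bookkeeping. For the forward direction I assume $u$ is $m$-modal and invoke Definition~\ref{def.mmodal}: $u$ sits at exactly one of its three levels, the attainable maximum of sign changes of $u-p_i$ being $2m$, $2m-1$ or $2m-2$ according as $i=1,2,3$, always with $m$ plus-blocks. The universal quantifier ``for arbitrary $p_i$'' together with the bound is verbatim the bound of Definition~\ref{def.convex} at orders $2m$, $2m-1$, $2m-2$, while the block classification pins the pattern and hence the orientation: the level-one pattern $-+-\cdots+-$ is $2m$-concavity, the level-three pattern $+-\cdots+$ is $(2m-2)$-convexity, and the two level-two patterns give $(2m-1)$-concavity or $(2m-1)$-convexity respectively. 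This is exactly the stated disjunction. The converse reverses each arrow: any one of the four hypotheses supplies, through Definition~\ref{def.convex}, the corresponding bounded count with the matching alternating pattern of $m$ plus-blocks, which is precisely one level of Definition~\ref{def.mmodal}. Regularity of $u$ enters through Theorem~\ref{thm.multimodal.maximum2}, ruling out infinite modality and ensuring the attainable maximum is finite and genuinely records a finite number $m$ of modes.

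The conceptual content behind the bookkeeping, and the point to get right, is that the four cases record how many of the $m$ modes sit at the boundary: under the convention $u_0=u_\infty=\inf u$ of Definition~\ref{def.multimodel}, an initial rise or a terminal fall counts as a mode, so $2m$-concavity ($-+-\cdots-$) places all $m$ modes in the interior, the two $(2m-1)$ cases place a single mode at the left or right endpoint, and $(2m-2)$-convexity ($+-\cdots+$), the U-shaped case, places modes at both ends --- which is why a convex sequence counts as bimodal when $m=2$. The hard part will be exactly this boundary accounting: verifying that the ``$m$ plus signs'' requirement, not the raw count, is what separates convex from concave and discards the $2m$-convex and $(2m-2)$-concave patterns; handling the small-$m$ degeneracies where the degrees $2m-2,2m-3$ fall to $0$ or below and one or more conditions become vacuous and must be excised from the disjunction; and, if one wishes to ground the argument in the primary definition rather than quote Definition~\ref{def.mmodal}, supplying the bridge that a degree-$(2m-1)$ polynomial can weave below the $m$ peaks and above the $m-1$ valleys so as to realise the same extremal sign pattern as the mode-aligned constant cut of Theorem~\ref{thm.multimodal}.
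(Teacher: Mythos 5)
Your proposal is correct and takes essentially the same route as the paper's proof: both translate $m$-modality (Definition \ref{def.mmodal}) and higher-order convexity/concavity (Definition \ref{def.convex}) into polynomial sign-change counts, and then match the four alternating patterns that carry exactly $m$ plus blocks at $2m$, $2m-1$ and $2m-2$ sign changes. Your block-counting dictionary is in fact slightly more careful than the paper's, whose necessity argument mislabels the $-+\cdots+$ pattern as ``$(2m-2)$-convex'' where $(2m-1)$-concave is meant.
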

		\begin{proof}
			Since  $u$ is regular, Theorem \ref{thm.multimodal.maximum} implies that it is unimodal or multimodal.
			For the sufficiency part, the proof follows by a  sign analysis. If $u$ is $2m$-concave,  there exists a polynomial $p_1$ of degree $2m-1$ such that $u-p_1$ has $2m$ sign changes, with the pattern $-+-\cdots-$. In this case, there are $m$ plus signs, so $u$ is $m$-modal. If $u$ is  $(2m-1)$-convex or $(2m-1)$-concave,  there exists a polynomial $p_2$ of order $2m-2$ such that $u-p_2$ has $2m-1$ sign changes, with the pattern $+-+\cdots-$ in the convex instance or $-+-\cdots+$ in the concave instance. In both cases, there are $m$ plus signs, so  $u$ is $m$-modal. The last case, using a polynomial of degree $2m-3$, is proven similarly.
			
			For the necessity part, suppose $u$ is $m$-modal. By Definition \ref{def.mmodal},   for any  polynomial $p_1$ of degree $2m - 1$, $u-p_1$  has at most $2m$ sign change,  $m$ of which are plus signs. With $2m$ sign changes, the pattern is $-+-\cdot-$, so $u$ is $2m$-concave. If no such $p_1$ exists, consider polynomials $p_2$   of degree $2m - 2$, for which $u-p_2$ has at most $2m-1$ sign change with $m$ plus sign. When this happens, the pattern observed is either $+-+\cdots-$ or $-+\cdots+$,  corresponding to $(2m-1)$-convex and $(2m-2)$-convex sequences, respectively. In the last scenario, where no such $p_1$ nor $p_2$ exists, consider a polynomial $p_3$ or order $2m-3$. In this case, $u-p_3$ has $2m-2$ sign changes,  $m$ of which are plus signs, with the pattern $+-+\cdots+$. This corresponds to an $(2m-2)$-convex sequence. This completes the proof.
		\end{proof}

		\subsection{$m$-modality and $m$-convexity of a quotient of sequences}
		Theorems 2.5 and 2.6 in \cite{Karp} incorrectly asserted that sign-regularity of kernels, regardless of the the signs of their minors $\varepsilon_1, \varepsilon_2, \ldots$, guarantee the  unimodality of quotients of series and of integral transforms. The error stems from the implicit   assumption that reversing the sign pattern of a unimodal function, unconditionally yields a unimodal function. This has been addressed in Lemmas (\ref{lem.sign.chg})-(\ref{lem.bimodal}). We provide a correction to their claim (assertions \rm(iii) and \rm(iv) in the next theorem) and generalise their results to  $m$-modal and $m$-convex sequences.
		\begin{lemma}\label{lemma.quotient.modal}
			Let $u, v$ be two real sequences on $I$. Suppose that $u$, $v$, and their additive inverses are regular sequences. If $v \ne 0$, then $w=u/v$ is regular.
		\end{lemma}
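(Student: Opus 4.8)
The plan is to exploit Definition \ref{def.regular} in the one place where it does real work: a sequence $u$ for which \emph{both} $u$ and $-u$ are regular cannot merely be eventually monotone, it must be eventually \emph{constant}. Indeed, if $u$ is $M_1$-regular then $(u_k)_{k \ge M_1}$ is nonincreasing, while $-u$ being $M_2$-regular says $(-u_k)_{k \ge M_2}$ is nonincreasing, i.e. $(u_k)_{k \ge M_2}$ is nondecreasing. Setting $M_u \coloneqq \max(M_1, M_2)$, for every $k \ge M_u$ we have simultaneously $u_{k+1} \le u_k$ and $u_{k+1} \ge u_k$, which forces $u_{k+1} = u_k$. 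Hence $u$ is constant on $[M_u, \infty)$, with common tail value $u_\infty$.

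First I would record this observation as the key step and apply it twice: to $u$, obtaining a constant tail value $u_\infty$ from index $M_u$ onward, and to $v$, obtaining a constant tail value $v_\infty$ from some index $M_v$ onward. Because the hypothesis $v \ne 0$ is read term by term (so $v_k \ne 0$ for every $k$), the tail value $v_\infty = v_{M_v} \ne 0$, and the quotient is well defined there. Then for every $k \ge M \coloneqq \max(M_u, M_v)$ we get $w_k = u_k / v_k = u_\infty / v_\infty$, a fixed real number. Thus $(w_k)_{k \ge M}$ is constant, in particular nonincreasing, so $w$ is $M$-regular and the lemma follows directly from Definition \ref{def.regular}.

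The step I expect to carry the argument is the first one --- the collapse of ``regular together with its additive inverse'' to eventual constancy; once this is in hand there is no genuine obstacle, since the quotient of two eventually constant sequences is again eventually constant. The only point demanding care is ensuring the denominator does not vanish in the limit, which is precisely why $v \ne 0$ is imposed: it guarantees $v_\infty \ne 0$ and hence that $w$ is defined and finite on its tail. It is worth remarking that the same reasoning in fact yields the stronger conclusion that $-w$ is regular as well, although only the stated conclusion is needed in the sequel.
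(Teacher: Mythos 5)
Your proof is correct, and it is in fact more substantive than the paper's own justification, which consists of the single sentence that the claim ``is evident from the definition of regular sequences as all values of $u$ and $v$ are attained.'' The two arguments lean on different mechanisms. The paper gestures at attainment of values (i.e.\ that the extrema of $u$ and $v$ are achieved), but taken literally that reason is not sufficient by itself: a sequence can attain both its maximum and its minimum while oscillating forever (e.g.\ $1, 0, \tfrac12, 0, \tfrac13, 0, \ldots$), so attainment alone does not yield a nonincreasing tail for the quotient. Your route isolates the observation that actually makes the lemma true under Definition~\ref{def.regular}: regularity of both $u$ and $-u$ forces the tail of $u$ to be simultaneously nonincreasing and nondecreasing, hence eventually constant, and likewise for $v$; the termwise hypothesis $v \ne 0$ then makes $w = u/v$ eventually constant, hence $M$-regular for explicit $M$. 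This buys two things the paper's one-liner does not: an airtight argument, and the stronger (free) conclusion that $-w$ is regular as well --- which is precisely the form in which the lemma gets used in Theorem~\ref{thm.multimodal.quotient}, where regularity of $w$ is fed into Theorem~\ref{thm.multimodal.maximum2} to conclude that $w$ is modal of some finite order. Your proof also makes visible how restrictive the joint hypothesis ``regular together with a regular additive inverse'' is for infinite sequences (it collapses to eventual constancy), a point the paper's phrasing obscures.
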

		\begin{proof}
			This is evident from the definition of regular sequences as all values of $u$ and $v$ are attained.
		\end{proof}
		\begin{thm}\label{thm.multimodal.quotient}
			Fix $m \ge 1$ and let $\eL$ be the transformation defined in (\ref{def.kernel.sequence-sequence}), based on a $\SR{2m+1}$ kernel $K: I \times J \to \mr$.  Let  $u$ and $v$ be two regular sequences on $I$, with regular additive inverses. Assume $\sum_{i \ge 0}  K(n, i) u_i < \infty$, $\sum_{i \ge 0}  K(n, i) v_i < \infty$,  $v_i > 0$ and $K(\cdot, i)$ attains all of its values,  for all $i \in I$.  Define
			\begin{equation}\label{def.ratio}
				w \coloneqq  \frac{\eL u}{\eL v}. 
			\end{equation}
			If the sequence of quotients $u/v = (u_0/v_0, u_1/v_2, \ldots, )$ is $m$-modal then:
			\begin{enumerate}[label=\rm(\roman*)]
				\item There exists a $p \in [m]$ such that $w$ is a $p$-modal   when $K$ or $-K$ is $\TP{2m+1}$.
				\item There exists a $p \in [m+1]$ such that $w$ is $p$-modal  when $K$ of $-K$ is $\TN{m+1}$.
			\end{enumerate}
			In particular, when $m=1$
			\begin{enumerate}[label=\rm(\roman*), start=3]
				\item $w$ is unimodal when  $K$ or $-K$ is $\TP{3}$.
				\item $w$ is  either unimodal or bimodal when  $K$ or $-K$ is $\TN{3}$.
			\end{enumerate}
		\end{thm}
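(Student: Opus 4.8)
The plan is to transfer the known $m$-modal structure of $u/v$ onto $w$ by exploiting the linearity of $\eL$, and then to read off the modality of $w$ from the variation-diminishing rule of Theorem~\ref{def.vd} together with the sign-pattern characterisation of Section~2. Throughout, I would work with the single identity
$$w_n-\lambda=\frac{(\eL u)_n-\lambda(\eL v)_n}{(\eL v)_n}=\frac{\bigl(\eL(u-\lambda v)\bigr)_n}{(\eL v)_n},\qquad \lambda\in\mr,$$
which holds because $\eL$ is linear.

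Before extracting sign information I would check that $w$ is regular, so that the criteria of Theorems~\ref{thm.multimodal.maximum2} and~\ref{thm.multimodal} apply. In each of the four cases $K$ or $-K$ is $\TP{1}$, so Lemma~\ref{lem.bound} shows that $\eL$ (resp.\ $-\eL$) carries the max/min-attainment of $u,-u,v,-v$ to $\eL u,\eL v$ and their additive inverses; hence all four are regular, and since $v_i>0$ forces $\eL v$ to keep one sign, Lemma~\ref{lemma.quotient.modal} gives regularity of $w=\eL u/\eL v$. In particular $w$ is $p$-modal for some finite $p$, and it remains only to bound $p$.

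Next I would convert sign counts. Because $\eL v$ has constant sign, dividing by it never creates or destroys sign changes, so $S(w-\lambda)=S\bigl(\eL(u-\lambda v)\bigr)$; and because $v_i>0$ we have $\sgn\bigl((u-\lambda v)_i\bigr)=\sgn(u_i/v_i-\lambda)$, whence $S(u-\lambda v)=S(u/v-\lambda)$. The $m$-modality of $u/v$ then bounds $S(u/v-\lambda)\le S^+(u/v)\le 2m$ (Proposition~\ref{prop.sufficient.m-modal}) and, at a maximising $\lambda$, fixes the admissible pattern (alternating, with exactly $m$ plus signs) via Theorem~\ref{thm.multimodal}. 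Feeding $S(u-\lambda v)\le 2m$ into the $\SR{2m+1}$ hypothesis of Theorem~\ref{def.vd} yields $S(w-\lambda)\le 2m$ for all $\lambda$, so $S^+(w)\le 2m$. To pin down $p$, I would track the pattern of $w-\lambda$ at the maximising $\lambda$: whenever the variation count is preserved, Theorem~\ref{def.vd}(ii) transports the pattern of $u-\lambda v$ to $\eL(u-\lambda v)$ either unchanged or negated, according to whether $\varepsilon_k\varepsilon_{k+1}=1$ or $-1$. For $K$ (or $-K$) totally positive this product works out to keep the inherited $m$ plus signs, forcing $p\le m$ and giving (i); specialising $m=1$ and invoking Corollary~\ref{lem.sign.chg} gives the unimodality in (iii). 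For $K$ (or $-K$) totally negative the pattern is reversed, and Theorem~\ref{prop.reversal.multimodal} shows a reversed $m$-modal pattern is at most $(m+1)$-modal, giving (ii); specialising $m=1$ and using Lemma~\ref{lem.bimodal} yields the unimodal-or-bimodal conclusion in (iv).

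The main obstacle is the sign bookkeeping at the maximising $\lambda$, where two independent negations compete. The variation-diminishing rule contributes one negation governed by $\varepsilon_k\varepsilon_{k+1}$, in which the factor $(-1)^k$ relating the minors of $K$ and $-K$ must be handled with care, while the denominator contributes a second, global negation governed by the sign of $\eL v$. The content of the theorem is that these two parities combine so that total positivity of $K$ or $-K$ yields a net preservation of the pattern (hence $p\le m$) whereas total negativity yields a net reversal (hence $p\le m+1$). A secondary point to verify is that the variation-diminishing step may strictly lower the count; this only decreases $p$ and is harmless, but one must confirm that the surviving reduced pattern is still admissible in the sense of Theorem~\ref{thm.multimodal} rather than merely bounding $S^+(w)$.
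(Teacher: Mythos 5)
Your proposal is correct and follows essentially the same route as the paper's own proof: the same linearity identity $w-\lambda=\eL\{v\,(u/v-\lambda)\}/\eL v$, the same conversion of sign counts through the positive sequence $v$ and the one-signed denominator $\eL v$, the same application of Theorem~\ref{def.vd}(ii) to transport (resp.\ reverse) the sign pattern, and the same appeal to Theorem~\ref{prop.reversal.multimodal} and Lemma~\ref{lem.bimodal} for the totally negative cases and the $m=1$ specialisations. The only notable differences are bookkeeping: where you track two competing negations in the $-K$ cases, the paper simply factors $-1$ out of the numerator and denominator of (\ref{def.ratio}) (leaving $w$ unchanged) so that only the case of nonnegative $K$ needs analysis, while your explicit regularity check on $w$ via Lemmas~\ref{lem.bound} and~\ref{lemma.quotient.modal} makes precise a step the paper passes over quickly.
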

		\begin{proof}
			Since  $u, v, -u, -v$ are regular,  Lemma \ref{lemma.quotient.modal} together with Theorem \ref{thm.multimodal.maximum2} imply that the sequence of quotients $w=u/v$ is unimodal or multimodal. By assumption, the order of modality is $m$. For part \rm(i), assume without loss of generality that $K$ is $\TP{m+1}$. If $-K$ is $\TP{m+1}$, simply factor out -1 from the numerator and denominator in (\ref{def.ratio}).   For arbitrary $\lambda \in \mr$, 	we have
			\begin{alignat*}{2}
				w-\lambda &= \frac{u^\prime}{v^\prime}  - \lambda 
				= \frac{\eL\left\{ v\left( w- \lambda  \right)\right\}}{v^\prime},
			\end{alignat*}	
			where $u^\prime \coloneqq \eL u $ and $v^\prime \coloneqq \eL v$.	Since $K$ is $\TP{m+1}$, it is nonnegative as it is  $\TP{1}$.  Thus, $v^\prime > 0$. In this case, $\sgn(v(w-\lambda)) = \sgn(w-\lambda)$ because $v > 0$.  Since $w$ is $m$-modal, $w -\lambda$ can have up to $2m$ sign changes.  By the VD property of $K$, $S(w) \le 2m$. If $S(w)=S(u/v) = k\le 2m$, then  $w$ and $u/v$  have the same sign pattern in view of part \rm(ii) of Theorem \ref{def.vd}. When $k=2m$ or $2m-1$,  $u/v$ is $m$-modal and so is $w$. Proceeding iteratively for each value of $k$, we conclude that $w$ is $p$-modal with $p \in [m]$, which is the desired outcome. If $S(w)< S(u/v)$, the sign pattern of $w$ is immaterial, as it will be $p$-modal, where $p \le m$.

			For part \rm(ii),  assume without loss of generality that $-K$ is $\TN{2m+1}$. In this case, $K$ is nonegative and has alternating minor signs  $\varepsilon_{1}=1, \varepsilon_{i}\varepsilon_{i+1}=-1$ for all $i$.  This implies $v^\prime > 0$. Using the  VD property, once more, together with part \rm(ii) of Theorem \ref{def.vd}, we get $S(w) \le 2m$. If  $S(w)=S(u/v) = k\le 2m$, then $u/v$ and  $w$ have the opposite sign patterns. Specifically, when $k=2m$, $w$ is $m$-modal, but because $w$ has the opposite sign pattern, it is an $(m+1)$-modal  as demonstrated in Theorem \ref{prop.reversal.multimodal}. As in part \rm(i), we conclude that $w$ is $p$-modal with $p \in [m+1]$. Part \rm(iii) and \rm(iv) follow directly from \rm(i) and \rm(ii). This concludes the proof.
		\end{proof}
		
		Adapting this theorem to convex (or concave) quotients yields a slightly stronger result. This is because we do need to assume $w$ is regular.  Furthermore, an $\SR{2m+1}$  kernel with the appropriate signature may transform an $m$-modal sequence into an $(m-1)$-modal sequence.   In the $m$-convexity case, we only require convergence and boundedness of the kernel.  Thus, the requirement for $m$-convexity preservation/reversion is less restrictive.
		
		\begin{thm}\label{thm.mconvexity}
			Fix $m \ge 1$ and let $\eL_m$ be the transformation defined in (\ref{def.Mtransform}) based on a $\SR{2m+1}$ kernel $K: I \times J \to \mr$.  Let  $u$ and $v$ be two bounded real sequences on $I$, where  $\sum_{i \ge 0}  K(n, i) u_i < \infty$, $\sum_{i \ge 0}  K(n, i) v_i < \infty$, and $v_i > 0$ for all $i \in I$. Define
			\begin{equation*}
				w \coloneqq  \frac{\eL_m u}{\eL_m v}. 
			\end{equation*}
			If the sequence of quotients $u/v = (u_0/v_0, u_1/v_2, \ldots, )$ is  $m$-convex (resp. $m$-concave), then:
			\begin{enumerate}[label=\rm(\roman*)]
				\item  $w$ is $m$-convex (resp. $m$-concave) on $J$, when $K$ or $-K$ is $\TP{m+1}$  then
				\item 	$w$ is $m$-concave (resp. $m$-convex) on $J$, when $K$ of $-K$ is $\TN{m+1}$.
			\end{enumerate}
			In particular, when $m=1$
			\begin{enumerate}[label=\rm(\roman*), start=3]
				\item $w$ is convex (resp. concave) on $J$ when  $K$ or $-K$ is $\TP{3}$.
				\item $w$ is  $m$-concave (resp. $m$-convex) on $J$  when  $K$ or $-K$ is $\TN{3}$.
			\end{enumerate}
		\end{thm}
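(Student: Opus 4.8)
The plan is to characterise $m$-convexity of the output quotient through the $(m+1)$-point divided-difference (determinant) criterion and to transfer it from the input quotient by the basic composition formula, exactly the mechanism underlying Theorem~\ref{def.vd}. Since $w$ is unchanged when $K$ is replaced by $-K$ (both numerator and denominator flip sign), I may assume in (i) that $K$ itself is $\TP{m+1}$ and in (ii) that $K$ itself is $\TN{m+1}$. First I would record that $v':=\eL_m v$ has constant sign (positive if $K$ is $\TP{1}$, negative if $K$ is $\TN{1}$), because $v_i>0$ and $K(\cdot,i)$ does not change sign; hence $w=u'/v'$ is well defined and, writing $u':=\eL_m u$, the $m$-convexity of $w$ is equivalent to the sign of the determinant with rows $v',\,n v',\dots,n^{m-1}v',\,u'$ evaluated at $n_0<\dots<n_m$, the column scaling by $v'_{n_l}$ introducing only the definite factor $\prod_l v'_{n_l}$.

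The engine is then the Cauchy--Binet (basic composition) expansion of the companion determinant whose rows are $\eL_m v,\ \eL_m(xv),\dots,\eL_m(x^{m-1}v),\ \eL_m u=u'$: it equals $\sum_{j_0<\dots<j_m}\det\bigl(K(n_l,j_s)\bigr)\,\det\bigl(F(j_s)\bigr)$, where $F$ has rows $v,\,xv,\dots,x^{m-1}v,\,u$. Factoring the positive weights $v_{j_s}$ out of the columns turns $\det(F(j_s))$ into a positive Vandermonde factor times the $m$th divided difference of $u/v$, which is $\ge 0$ precisely because $u/v$ is $m$-convex. As the $(m+1)$-minors of $K$ are $\ge 0$ for $\TP{m+1}$ and $\le 0$ for $\TN{m+1}$, the companion determinant inherits a definite sign, and combining it with the sign of $\prod_l v'_{n_l}$ reproduces (i)--(ii): $w$ is $m$-convex in the totally positive case and $m$-concave in the totally negative case (and $m$-concavity of $u/v$ is handled by replacing $u$ with $-u$). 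The instances (iii)--(iv) with $m=1$ then follow immediately and \emph{cleanly}, since there the only monomial row is the constant one, $x^0 v'=\eL_m v$, so the companion determinant \emph{is} the convexity determinant of $w$; this is also why no regularity of $w$ need be assumed, the polynomial sign-change criterion of Definition~\ref{def.convex} not invoking the finite-maximum condition of the $m$-modal case.

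The main obstacle lies entirely at the junction between the two determinants for $m\ge 2$. The basic composition formula naturally produces rows $\eL_m(x^k v)$, whereas the $m$-convexity of $w=u'/v'$ is governed by rows $x^k v'=x^k\,\eL_m v$, and these differ because $\eL_m$ does not commute with multiplication by the monomials or the weight, that is $\eL_m(x^k v)\neq (\eL_m x^k)\,\eL_m v$ in general. Here is exactly where the defining property of $\eL_m$ — that it carries each polynomial of degree $k\le m-1$ to a polynomial of degree $k$ with leading coefficient $b_0\neq 0$ (see (\ref{def.Mtransform})) — must be used: a row reduction replaces $\{x^k v'\}_{k<m}$ by $\{(\eL_m x^k)\,v'\}_{k<m}$, which spans the same space and changes the determinant only by the nonzero constant $\prod_k b_0^{(k)}$, after which one must argue that the residual discrepancy between $(\eL_m x^k)\,v'$ and $\eL_m(x^k v)$ does not alter the sign of the minor. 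I expect this reconciliation, together with verifying that the signs of the leading coefficients $b_0$ do not reverse the orientation of the $(m+1)$-minors, to be the delicate step of the whole argument; the remaining $-K$ variants require nothing further by the invariance of $w$ noted at the outset.
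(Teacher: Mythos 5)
Your route (divided-difference determinants plus Cauchy--Binet) is genuinely different from the paper's, which subtracts a transformed polynomial $Q_{m-1}\coloneqq\eL_m P_{m-1}$ from $w$, rewrites $w-Q_{m-1}$ as a weighted transform of $v\,(u/v-P_{m-1})$ divided by $\eL_m v$, and then invokes the variation-diminishing property and part (ii) of Theorem~\ref{def.vd} to count and orient sign changes in the sense of Definition~\ref{def.convex}. But your proposal contains a genuine gap, and it sits exactly where you flag it. For $m\ge 2$, Cauchy--Binet controls the sign of the companion determinant, whose rows are $\eL_m(x^kv)$, $k=0,\dots,m-1$, and $\eL_m u$; the $m$-convexity of $w$ is instead governed by the determinant whose rows are $n^k(\eL_m v)_n$ and $(\eL_m u)_n$. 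Your proposed row reduction merely exchanges the monomial basis $\{x^k\}$ for the polynomial basis $\{\eL_m x^k\}$ inside the second determinant; it does nothing to convert a product of transforms $(\eL_m x^k)_n\,(\eL_m v)_n$ into a transform of a product $\eL_m(x^kv)_n$, and neither the degree-preservation hypothesis on $\eL_m$ nor anything else in the theorem makes those two objects comparable in sign.

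Worse, the identification you need is demonstrably false, and the paper's own numerical example witnesses this. Take $m=2$, the totally negative matrix $A$ from the numerical examples, $u=(4,2,1,2)$, $v=(1,1,2,2)$ with indices $j=0,1,2,3$, so $u/v=(4,2,1/2,1)$ is $2$-convex and $xv=(0,1,4,6)$. Restricting to output indices $n=0,1,2$,
\begin{equation*}
\det\begin{pmatrix} -17 & -41 & -63 \\ -38 & -82 & -120 \\ -19 & -55 & -89 \end{pmatrix} = -200 < 0,
\qquad
\det\begin{pmatrix} -17 & -41 & -63 \\ 0 & -41 & -126 \\ -19 & -55 & -89 \end{pmatrix} = 6700 > 0,
\end{equation*}
where the first matrix has rows $\eL_m v,\ \eL_m(xv),\ \eL_m u$ (your companion determinant, negative exactly as Cauchy--Binet predicts for a $\TN{3}$ kernel), and the second has rows $\eL_m v,\ x\,\eL_m v,\ \eL_m u$ (the determinant that actually governs the convexity of $w$). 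The two signs are opposite, so no sign-preserving reconciliation exists. Indeed, had it existed, your own bookkeeping in the totally negative case---companion determinant $\le 0$ divided by $\prod_{l=0}^{m}(\eL_m v)_{n_l}$, whose sign is $(-1)^{m+1}$---would force the convexity determinant of $w$ to have sign $(-1)^m$, i.e.\ $w$ would be $m$-\emph{convex} for even $m$, contradicting part (ii) and the example itself, where $w=(1.1176,1.3415,1.4127,1.481)$ is $2$-concave. Your $m=1$ cases (iii)--(iv) are complete, because there $\eL_m(x^0v)=x^0\,\eL_m v$ and the two determinants coincide (this is the classical ratio-monotonicity argument). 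To your credit, the obstruction you isolated is real and not an artifact of your method: the paper's displayed identity with $Q_{m-1}\coloneqq\eL_m P_{m-1}$ tacitly requires the same commutation $\eL_m(vP_{m-1})=(\eL_m v)(\eL_m P_{m-1})$, which linearity delivers only when the subtracted polynomial is a constant---the $m=1$/unimodality situation of Theorem~\ref{thm.multimodal.quotient}. But locating the crux is not the same as crossing it, and as written your argument establishes only assertions (iii) and (iv).
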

		\begin{proof}
			Let $P_{m-1} \coloneqq  P_{m-1}(x)= a_0x^{m-1}+a_1x^{m-2}+\cdots+a_{m-2},$ where $a_0 > 0$.  The proof uses the identity
			$$v_n-Q_{m-1}(n) = \frac{\sum_{j \ge 0} K(n, j) \left\{v_j(w_j - P_{m-1}(j))\right\}}{v^\prime_n},~n\in J$$
			where $Q_{m-1} \coloneqq \eL_m P_{m-1} $. Given that $K$ is $\TP{m+1}$, it follows that $g_v(x) > 0$. By the theorem's hypothesis $v > 0$,  this imples that $\sgn(v_n(w_n - P_{m-1}(n))) = \sgn(w_n - P_{m-1}(n))$. By the VD property of $K$, the inequality $S(g(x)-Q_{m-1}(x)) \le m$ holds. In the event where $S(g(x)-Q_{m-1}(x))=m$,  given that $\varepsilon_{m}\varepsilon_{m+1}=1$, part \rm(ii) of Theorem \ref{def.vd} dictates that the sequence $w - P_{m-1}(n)$ and the function $g(x)-Q_{m-1}(x)$ display the same pattern as $n$ traverses $I$ and $x$ traverses $J$. Consequently, $g(x)$ is $m$-convex as it has at most $m$ sign changes, and when all these changes  occur, the sign arrangement matches that of $w-P_{m-1}$, namely $+-+\cdots$. The case of $m$-concavity is analogous.  

			Part \rm(ii) is straightforward and can be proved in a similar way to part  \rm(ii) of the previous theorem.  Parts \rm(iii) and \rm(iv) follow from \rm(i) and \rm(ii). This completes the proof.
		\end{proof}

		\subsection{Applications and Examples}
		\subsubsection{Domination Polynomials}
		A dominating set $S$ of a graph $G = (V, E)$ is a subset of vertices $S$ such that every vertex in $G$ is either in $S$ or adjacent to a vertex in $S$. The domination polynomial of $G$ is a polynomial whose degree is the number of vertices in $G$ and whose $n$th coefficient $n$ is the number of dominating sets of  $G$ of cardinality $n$. \cite{Beaton}  extended the families for which unimodality of the domination polynomial is known to other types of graphs.  We offer a generalisation to Theorem 2.2 in their result and offer a simpler proof.
		\begin{thm}
			Fix two integers $m \ge 1$ and $k \ge 0$ and consider the sequence of polynomials, $(f_{n})_{n\ge1}$ defined by the recurrence
			\begin{equation}\label{def.sum}
				f_{n} (x)\coloneqq\sum_{i=n-m}^{n-1} x^k f_{i}(x),.
			\end{equation}
			If $f_{m+1}$ is unimodal (resp. convex, convave) then $f_{n}$ is unimodal (resp. convex, concave) for all $n > m$.
		\end{thm}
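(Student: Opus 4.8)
The plan is to reinterpret the recurrence at the level of coefficient sequences. Multiplication by $x^k$ is simply the shift sending $(c_0,c_1,\dots)$ to the sequence with $k$ leading zeros followed by $c_0,c_1,\dots$; its matrix has every minor equal to $0$ or $1$, so it is $\TP{}$, and by Theorem~\ref{thm.unimodality.preserving} (the signature $\varepsilon_1=\varepsilon_2=\varepsilon_3=1$) it is a UP transformation, carrying a mode interval $[a,b]$ to $[a+k,b+k]$. Writing $\mathcal{L}$ for this shift, the recurrence becomes $f_n=\mathcal{L}(f_{n-1}+\cdots+f_{n-m})$, which isolates the only two operations at play: a window sum of $m$ consecutive terms, then one UP map. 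For the convex (resp.\ concave) case this already finishes the argument: the operator $\Delta^2$ commutes with the shift and is additive, so $\Delta^2 f_n=\sum_{i=n-m}^{n-1}\mathcal{L}\,\Delta^2 f_i$; hence convexity of the $m$ terms feeding $f_n$ forces convexity of $f_n$, and a strong induction anchored at $f_{m+1}$ propagates it to all $n>m$ (this is the totally positive shift instance of Theorem~\ref{thm.mconvexity.preserver}), the concave case following by negation.

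The unimodal case is the substantive one, and I would treat it by induction on $n$, with base case the hypothesis on $f_{m+1}$. Assuming $f_{n-m},\dots,f_{n-1}$ are unimodal, the target is that the window sum $\Sigma_n:=f_{n-1}+\cdots+f_{n-m}$ is unimodal, since then $f_n=\mathcal{L}\Sigma_n$ is unimodal because $\mathcal{L}$ is UP. Because a sum of unimodal sequences need not be unimodal, the argument must rest on Theorem~\ref{thm.closure.sum}: two unimodal summands with mode intervals $[a,b]$ and $[c,d]$ sum to a unimodal sequence whenever $a\le c\le b$ or $c=b+1$. I would apply this iteratively, adding $f_{n-m}$ to $f_{n-m+1}$, then that partial sum to $f_{n-m+2}$, and so on through the window, so that unimodality of $\Sigma_n$ reduces to a compatibility condition on the locations of the modes of the $f_i$.

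The crux, and the step I expect to be hardest, is the mode bookkeeping needed to license each of these $m-1$ applications of Theorem~\ref{thm.closure.sum}. By Lemma~\ref{lem.modelocation} the UP shift $\mathcal{L}$ can only move a mode to the right (by exactly $k$ for a pure shift), so the modes of the $f_i$ form a nondecreasing chain; what must be shown is the stronger statement that consecutive modes never separate, i.e.\ that the mode interval of each partial sum overlaps, or is exactly adjacent to, the mode interval of the next summand. The degree growth $\deg f_n=\deg f_{n-1}+k$, together with the fact that each $f_{i+1}=\mathcal{L}\Sigma_{i+1}$ inherits a mode lying between the extreme modes of its own window shifted by $k$, is what keeps the per-step increment small enough to guarantee overlap; turning this into a clean quantitative invariant carried along the induction is the delicate part. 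No convergence or boundedness subtleties intervene, as every $f_i$ is a polynomial with finitely many nonzero coefficients, so the VD machinery underlying Theorem~\ref{thm.unimodality.preserving} applies verbatim.
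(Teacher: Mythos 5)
Your overall strategy (coefficient shift plus window sum, induction on $n$, with Theorem~\ref{thm.closure.sum} and Lemma~\ref{lem.modelocation} controlling mode locations) is a natural reading of the problem, but it has a gap that cannot be closed: the induction has no valid base. The hypothesis constrains only $f_{m+1}$, while the initial polynomials $f_1,\dots,f_m$ are free data and re-enter the recursion immediately after the base case, since $f_{m+2}=x^k(f_2+\cdots+f_m+f_{m+1})$. So for any $m\ge 2$ your inductive hypothesis ``$f_{n-m},\dots,f_{n-1}$ are unimodal'' is unavailable already at $n=m+2$, and no mode bookkeeping can substitute for it, because nothing forces $f_2,\dots,f_m$ to be unimodal at all. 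Indeed the ``delicate part'' you defer is not merely delicate; it is false, and the statement as written fails for $m\ge 2$. Take $m=2$, $k=0$, $f_1=11x$, $f_2=10+10x^2$: then $f_3=f_1+f_2=10+11x+10x^2$ has coefficient sequence $(10,11,10)$, which is unimodal, while $f_4=f_2+f_3=20+11x+20x^2$ has coefficients $(20,11,20)$, which is bimodal. The shift does not rescue this for $k\ge 1$: with $m=2$, $k=2$, $f_1=x+x^2$, $f_2=1+x^3$, one gets $f_3=x^2+x^3+x^4+x^5$ (unimodal) but $f_4=x^2(f_2+f_3)$ with coefficients $(0,0,1,0,1,2,1,1)$ (bimodal). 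Any correct proof must assume more than the theorem states, e.g.\ unimodality and nonnegativity of $f_2,\dots,f_{m+1}$ together with the overlap/adjacency condition of Theorem~\ref{thm.closure.sum} maintained along the whole window.

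Two further points. First, your convexity argument has an independent flaw: $\Delta^2$ does not commute with the zero-padding shift at the junction. For instance $(30,10,0,0,\dots)$ is convex, but prepending two zeros gives $(0,0,30,10,0,\dots)$ with $\Delta^2=(30,-50,10,10,0,\dots)$, so multiplication by $x^k$ is not convexity-preserving; and the convex claim collapses for the same anchoring reason as the unimodal one ($m=2$, $k=0$, $f_1=30$, $f_2=10x$ gives convex $f_3=30+10x$ but $f_4=30+20x$, whose coefficient sequence $(30,20,0,\dots)$ has $\Delta^2$ starting at $-10$). Second, for the comparison you could not make: the paper's own proof takes a different route, rewriting the recurrence as a single application of the banded totally positive kernel in (\ref{def.convolution}) and invoking Theorem~\ref{thm.unimodality.preserving}; but that identity acts along the polynomial index $i$ (all the unconstrained initial rows $f_2,\dots,f_m$ enter it) and is not a transformation carrying the coefficient sequence of $f_{m+1}$ to that of $f_n$, so it silently skips exactly the obstruction your attempt makes visible. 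In that sense your proposal, though incomplete and unable to succeed as stated, correctly locates where any honest proof of this statement must do work, and why the statement needs stronger hypotheses.
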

		\begin{proof}
			First, we may express the first $m$ polynomials of arbitrary orders $r_1 \le r_2 \le \cdots \le r_m$, respectively, using formal power series $f_i(x) = \sum_{j = 0}^\infty a_{i,j} x^j$, where $i \in [m]$ and $a_{i,j}=0$ when $j > r_i$. Each of these polynomials can be represented by the sequences $(a_{i,0}, a_{i, 1}, \ldots, a_{i,r_i}, 0,0, \ldots)$.  For $n > m$, we have
			$$f_{n} (x) \coloneqq \sum_{i=n-m}^{n-1} x f_i(x) = a_{n,\,r_m+k} x^{r_m+k}+a_{n,\,r_m+k-1}  x^{r_m+k-1}+\cdots+a_{n,\,k}x^{k}$$
			where
			\begin{alignat*}{2}
				a_{n, j} \coloneqq 
				\begin{dcases}
					a_{n-1,\, j-k} + a_{n-2,\, j-k} + \cdots +a_{n-m,\, j-k},&  j \ge k,\\
					0 & otherwise
				\end{dcases}
			\end{alignat*}
			For $n > m$, $a_{n, j}$ can be expressed as
			\begin{equation}\label{def.convolution}
				a_{n, j} = \sum_{i= 0}^\infty K(n, i) a_{i, j-k},
			\end{equation}
			where $K$ is the kernel representation of the sequence $(0, \overbrace{1,1,\ldots,1}^{m \text { times}}, 0,\ldots,0)$ after setting all entries in the first $m$ rows to 0. An example of a $6 \times 6$ matrix $K$ with $m=2$ is given below
			$$K=\begin{pmatrix}
				0 & 0 & 0 & 0 & 0 & 0 \\
				0 & 0 & 0 & 0 & 0 & 0 \\
				1 &1 & 0 & 0 & 0 & 0 \\
				0 & 1& 1 & 0 & 0 & 0 \\
				0 & 0 & 1 & 1 & 0 & 0 \\
				0 & 0 & 0 & 1 & 1 & 0
			\end{pmatrix}.$$
			This kernel is trivially $\TP{}$. Since the sequence  $(a_{m+1,0}, a_{m+1,1}, \ldots, )$ is unimodal  by assumption, Theorem \ref{thm.unimodality.preserving} implies the sequence $(a_{n,0}, a_{n1}, \ldots, )$ is unimodal for all $n > m$ and hence the polynomial $f_{n}$ is unimodal for all $n > m$. The convexity and concavity results are proved in exactly the same way by using Theorem \ref{thm.mconvexity.preserver}.
		\end{proof}
		Note that Theorem 2.2 in \cite{Beaton} is obtained directly by setting $m=1$ and $k=1$ in (\ref{def.sum}).
		\subsubsection{Numerical examples}						
		We present three different scenarios of  sequence-to-sequence transformations using a totally negative matrix, which naturally maps a polynomial of degree $m$ to a polynomial of degree $m$. Let
		\[
		A = \begin{pmatrix}
			-1 & -2 & -3 & -4 \\
			-5 & -6 & -7 & -8 \\
			-9 & -10 & -11 & -11 \\
			-13 & -14 & -15 & -11
		\end{pmatrix}.
		\]
		We examine the quotient $u/v$ and the corresponding transformation $Au/Av$, where $u, v$ are two given vectors.

		\noindent 1. Unimodal to Bimodal: Let $u= (0,  3 , 3 , 1)$, $v= (1 , 1 , 1 ,1).$ 
		By direct computation, $u/v= (0,  3,  3, 1)$ is unimodal. However, 	
		$$ \frac{Au}{Av}
		= (	1.9, 1.8077, 1.8049, 1.8491),
		$$
		is  bimodal. Our result does not imply that a unimodal sequence will necessarily become turn bimodal; it only states that the transformed sequence will be $\{0,1, 2\}$-modal.
		
		\noindent 2. Bimodal to Unimodal: Let $u = (6, 5, 6, 7)$, $v = (3, 1, 2, 1).$ 
		In this example,  $u/v= (2, 5, 3, 7)$ is bimodal and 	
		$$\frac{Au}{Av}
		= (	4.1333, 3.6744, 3.5286, 3.3511),
		$$
		is unimodal.
		
		\noindent 3. Convex to Concave: Let $u = (4, 2, 1, 2)$, $v = (1, 1, 2, 2).$ 
		In this example,  $u/v= (4, 2, 1/2, 1)$ is convex. However,
		$$\frac{Au}{Av}
		= (1.1176, 1.3415, 1.4127, 1.481),
		$$
		turns out to be  concave. In fact, $A$ maps 3-convex sequences $u$ and $v$ into 3-convex sequences $Au$ and $Av$.
		\section{Conclusion}
		
		In contrast to convexity, which is well studied and has been covered in many references, unimodality, despite its simplicity, received less attention outside of probability theory \cite{Beaton, Dharmadhikar} , modelling preferences in economics, marketing science and social choice theory \cite{Vanblokland, Ansari}. Given that most applications tend to focus on continuous and bounded functions on a closed interval or on a finite sequence, where each value is necessarily attainable, questions related to the existence or conditions of unimodality of a function to hold where not required. It is quite clear from the definition of probability distributions that a unimodal (distribution) function  has an atom only at a unique point (the mode).
		
		The recent study by \cite{Karp} highlighted the need for a formal existence as well as conditions for a function to be unimodal. Our attempt goes into the same vein, with an attempt to generalise the treatment to encompass the concept of multimodality. Turning to the topic of total positivity, we  derived important results that specify the type of sign-regular kernels that preserve unimodality and convexity and those that reverse unimodality and convexity. As reversal is defined in terms of sign inversion, we have shown that  reversing a unimodal sequence yields a modal sequence of order 0, 1  or 2. Zero being nonmodal. Finally, we used the  machinery of total positivity and that of generalised unimodality to find important criteria for the $m$-modality and $m$-convexity of the quotient of transformed sequences. These results are portable  to power series, more general series, and integral transforms, as we shall show in our forthcoming work on $m$-modality of functions.

		While convexity has been extensively studied, unimodality, despite its simplicity, has received less attention outside of probability theory  \cite{Beaton, Dharmadhikar} and its applications to modelling preferences in economics, marketing, and social choice theory \cite{Vanblokland, Ansari}.  Because most applications focus on continuous, bounded functions on closed intervals or finite sequences where all values are attainable, questions of existence and conditions for unimodality have not been central. 
		
		Recent work by \cite{Karp}  highlighted the need for formal definitions of existence and conditions for unimodality.  Our work follows this direction, generalising the treatment to include multimodality.  Regarding total positivity, we have derived key results specifying which sign-regular kernels preserve and reverse unimodality and convexity.  Since reversal involves sign inversion, we have shown that reversing a unimodal sequence results in a modal sequence of order 0, 1, or 2 (where 0 indicates nonmodality). Finally, using total positivity , we have established important criteria for the $m$-modality and $m$-convexity of quotients of transformed sequences. These results are applicable to power series, more general series, and integral transforms, as we will demonstrate in future work on the $m$-modality of functions

		\printbibliography
		
	\end{document}